\newcommand{\bb}{\mathbb}
\newcommand{\cal}{\mathcal}
\renewcommand{\th}{\textsuperscript{th}\ }
\providecommand{\eps}{\varepsilon}
\newcommand{\norm}[1]{\left\lVert#1\right\rVert}
\newcommand{\seminorm}[1]{\left[#1\right]}
\providecommand{\tensor}{\otimes}
  \let\div\relax
  \let\d\relax
  \DeclareMathOperator{\div}{div}
  \newcommand{\d}{\ensuremath{\mathrm{d}}}
\newcommand{\pdeproblem}[6]{
  \setlength{\arraycolsep}{0pt}
  \renewcommand{\arraystretch}{1.2}
  \left\{\begin{array}{r @{\ } l @{\ } l}
    #1 &= #2 &\text{ in } #3 \\
    #4 &= #5 &\text{ on } #6 \\
  \end{array}\right.
}
\DeclareMathOperator{\tr}{tr}
\DeclareMathOperator{\dist}{dist}
\DeclareMathOperator{\sym}{Sym}
\DeclareMathOperator{\loc}{loc}
\DeclareMathOperator{\BMO}{BMO}
\DeclareMathOperator{\VMO}{VMO}
\newcommand{\twopartdef}[4] { \left\{ \begin{array}{ll} #1 &  #2 \\ #3 &  #4 \end{array} \right.}
\def\Xint#1{\,\mathchoice
  {\XXint\displaystyle\textstyle{#1}}%
  {\XXint\textstyle\scriptstyle{#1}}%
  {\XXint\scriptstyle\scriptscriptstyle{#1}}%
  {\XXint\scriptscriptstyle\scriptscriptstyle{#1}}%
  \!\int}
\def\XXint#1#2#3{\setbox0=\hbox{$#1{#2#3}{\int}$}\vcenter{\hbox{$#2#3$}}\kern-.5\wd0}
\def\dashint{\Xint-}
\theoremstyle{definition}
\newtheorem{thm}{Theorem}[section]
\newtheorem*{thm*}{Theorem}
\newtheorem{cor}[thm]{Corollary}
\newtheorem{defn}[thm]{Definition}
\newtheorem{lem}[thm]{Lemma}
\newtheorem{prop}[thm]{Proposition}
\newtheorem{hyp}[thm]{Hypotheses}
\newtheorem{rem}[thm]{Remark}
\tikzstyle{tikzfig}=[baseline=-0.25em,scale=0.5]
\tikzstyle{none}=[inner sep=0mm]
\newcommand{\tikzfig}[1]{%
{\tikzstyle{every picture}=[tikzfig]
\IfFileExists{#1.tikz}
  {\input{#1.tikz}}
  {%
    \IfFileExists{./figures/#1.tikz}
      {\input{./figures/#1.tikz}}
      {\tikz[baseline=-0.5em]{\node[draw=red,font=\color{red},fill=red!10!white] {\textit{#1}};}}%
  }}%
}
\tikzstyle{every loop}=[]\usepackage[citecolor=blue,colorlinks=true,bookmarksdepth=2]{hyperref}
\numberwithin{equation}{section}
\newcommand{%
    
    \import{./}{.pdf_tex}
  }[1]{%
    
    \import{./}{#1.pdf_tex}
  }
\begin{document}


\title[BMO $\eps$-regularity results for elliptic systems]{BMO $\varepsilon$-regularity results for solutions to Legendre-Hadamard elliptic systems}

\author[C.~Irving]{Christopher Irving}
\thanks{\textit{Funding}: The author was supported by the EPSRC [EP/L015811/1]}
\address{Faculty of Mathematics, TU Dortmund University, Vogelpothsweg 87, 44227 Dortmund, Germany}
\email{christopher.irving@tu-dortmund.de}
\date{\today}
\subjclass[2020]{35J47, 35J50, 42B37}
\keywords{Legendre-Hadamard ellipticity, epsilon-regularity, bounded mean oscillation}

\begin{abstract}
  We will establish an $\eps$-regularity result for weak solutions to Legendre-Hadamard elliptic systems, under the a-priori assumption that the gradient $\nabla u$ is small in $\mathrm{BMO}.$ Focusing on the case of Euler-Lagrange systems to simplify the exposition, regularity results will be obtained up to the boundary, and global consequences will be explored. Extensions to general quasilinear elliptic systems and higher-order integrands is also discussed.
\end{abstract}

\maketitle

\tableofcontents

\section{Introduction}

In this paper we study the regularity of weak solutions to the Euler-Lagrange system
\begin{equation}\label{eq:autonomous_EL}
  - \div F'(\nabla u) = 0
\end{equation}
in $\Omega \subset \bb R^n$ where $u \colon \Omega \to \bb R^N$ is a vector-valued mapping, that is $u$ satisfies
\begin{equation}
  \int_{\Omega} F'(\nabla u) : \nabla \varphi \,\d x = 0
\end{equation}
for all $\varphi \in C^{\infty}_c(\Omega,\bb R^N).$ Henceforth referred to as \emph{$F$-extremals}, solutions to (\ref{eq:autonomous_EL}) are critical points to the functional
\begin{equation}\label{eq:F_energy}
  \cal F(w) = \int_{\Omega} F(\nabla w(x)) \,\d x.
\end{equation}
There is a considerable literature studying the partial regularity theory for minimisers of such functionals, under a suitably strict version of the \emph{quasiconvexity condition} introduced by \textsc{Morrey} \cite{article:Morrey52}. A striking feature of the vectorial ($n,N\geq 2$) setting is that minimisers need not be everywhere regular (see for instance \cite{article:degiorgi68, article:Mazya69,  article:Necas77, article:MooneySavin16}), so the best we can hope for are \emph{partial regularity} results. In the quasiconvex setting the first result in this direction was due to \textsc{Evans} in \cite{article:Evans86} which has been extended considerably since; we refer the interested reader to the monograph of \textsc{Giusti} \cite{book:Giusti03} and the references therein.

For arbitrary weak solutions of the above equation however, the work of \textsc{M\"uller \& \v{S}ver\'ak} \cite{article:MullerSverak03} shows that we cannot hope for improved regularity results. Developing the theory of convex integration for Lipschitz mappings they constructed highly irregular solutions to (\ref{eq:autonomous_EL}), including Lipschitz solutions that fail to be $C^1$ in any open subset and compactly supported solutions whose gradient is $L^q$-integrable if and only if $q \leq 2.$ These results have been extended by \textsc{Kristensen \& Taheri} \cite{article:KristensenTaheri03} for weak local minimisers, and by \textsc{Sz\'ekelyhidi} \cite{article:Szekelyhidi04} for strongly polyconvex integrands.

However it is well-known that if $u$ is suitably regular, we can infer higher regularity by a bootstrap argument. This follows for instance using the classical \emph{Schauder estimates}, where if the integrand $F$ is smooth and suitably convex, any $C^{1,\alpha}$ solution for $\alpha \in (0,1)$ can be shown to be smooth. A natural question is to ask whether this a-priori H\"older condition can be further relaxed.

This was by \textsc{Moser} in the preprint \cite{article:Moser01}, who claimed it was sufficient to assume that $u$ was Lipschitz such that $\nabla u$ lies in the space $\VMO$ of functions of \emph{vanishing mean oscillation} as introduced by \textsc{Sarason} \cite{article:Sarason75}. This condition was motivated from related regularity results for linear elliptic systems, where the work of \textsc{Chiarenza, Frasca, \& Longo} \cite{article:ChiarenzaEtAl91} established $W^{2,p}$ estimates for linear uniformly elliptic equations where the coefficient matrix $A$ was assumed to be in $\VMO.$ A similar statement was established by \textsc{Campos Cordero} \cite{article:CamposCordero17} for quasiconvex integrands through different means, noting also an inconsistency in the proof in \cite{article:Moser01}. In this paper we will extend these results, establishing regularity up to the boundary in a more general setting.

While we focus on the case of $F$-extremals to illustrate the main ideas, it turns out the arguments do not make use of the variational structure and extends to more general Legendre-Hadamard elliptic systems. We will sketch this extension in Section \ref{sec:extensions}, where higher-order equations are also considered.

\subsection{Setup and main results}

We will study the following class of integrands; we refer the reader to Section \ref{sec:notation} for the precise notational conventions.

\begin{hyp}\label{hyp:autonomous_F}
  For $n \geq 2$ and $N \geq 1,$ let $F \colon \bb R^{Nn} \to \bb R$ satisfy the following.

  \begin{enumerate}[label=(H\arabic*)]
    \setcounter{enumi}{-1}

    \item $F$ is of class $C^2.$

    \item There is $q \geq 2$ such that $F$ satisfies the natural growth condition
      \begin{equation*}
        \lvert F(z)\rvert \leq K (1+\lvert z\rvert)^q
      \end{equation*}
      for all $z \in \bb R^{Nn}.$

    \item $F''$ satisfies a strict Legendre-Hadamard condition, namely for all $z \in \bb R^{N n}$ we have
      \begin{equation*}\label{eq:legendrehadamard}
        F''(z_0)(\xi \otimes \eta) :(\xi \otimes \eta) \geq 0
      \end{equation*}
      for all $\xi \in \bb R^N$ and $\eta \in \bb R^n,$ with equality if and only if $\xi\otimes \eta = 0.$

  \end{enumerate}
\end{hyp}

A key feature of our results is that we only need to assume a strict Legendre-Hadamard condition which is closely related to rank-one convexity of $F,$ and as the construction of \textsc{\v{S}ver\'ak} \cite{article:Sverak92} illustrates rank-one convexity is strictly weaker than the quasiconvexity condition of \textsc{Morrey}. We also highlight that we do not require control in the $L^q$ scales from below, so this allows for all growth conditions of type $(1,q),$ that is
\begin{equation}
  \lvert z\rvert -1 \lesssim F(z) \lesssim \lvert z\rvert^q +1
\end{equation} 

The key ideas are contained in the following interior regularity theorem, which we will prove in Section \ref{sec:interiorregularity}. For the precise definition of $\BMO$ functions we adopt in the text we refer the reader to Section \ref{sec:BMO}.

\begin{thm}[$\BMO$ $\eps$-regularity theorem]\label{thm:autonomous_regularity}
  Suppose $F$ satisfies Hypotheses \ref{hyp:autonomous_F}. Then for all $M > 0$ and $\alpha \in (0,1),$ there is $\eps = \eps(M,F,\alpha)>0$ such that for any ball $B_R(x_0) \subset \bb R^n$ if $u$ is $F$-extremal in $B_R(x_0)$ with $\lvert (\nabla u)_{B_R(x_0)}\rvert \leq M$ and 
  \begin{equation}
     \seminorm{\nabla u}_{\BMO(B_R)} \leq \eps,
  \end{equation}
  we have $u$ is $C^{1,\alpha}$ on $\overline {B_{R/2}(x_0)}.$
\end{thm}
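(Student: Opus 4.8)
The plan is to run a Campanato-type iteration, showing that the excess quantities decay geometrically at all scales and locations inside $B_{R/2}(x_0)$, which by Campanato's characterisation yields $u \in C^{1,\alpha}(\overline{B_{R/2}(x_0)})$. After rescaling we may take $B_R(x_0) = B_1$. Because $F$ is only $C^2$ and the Legendre--Hadamard condition is enforced only strictly (with no two-sided $q$-growth from below), the linearisation will be the constant-coefficient system $\div(A\nabla v) = 0$ with $A = F''(z_1)$ for $z_1$ close to $(\nabla u)_{B_1}$; this $A$ is Legendre--Hadamard elliptic, so constant-coefficient $L^2$-estimates and decay estimates (of the form $\fint_{B_\rho}|\nabla v - (\nabla v)_{B_\rho}|^2 \lesssim \rho^2 \fint_{B_1}|\nabla v - (\nabla v)_{B_1}|^2$) hold for solutions $v$ of the linearised system. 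The smallness of $[\nabla u]_{\BMO}$ plays two roles: it keeps $\nabla u$ close in an averaged sense to the bounded constant $(\nabla u)_{B_1}$ (whose modulus is controlled by $M$), so that $F''$ evaluated along the relevant competitors stays in a fixed compact set on which it is uniformly continuous and uniformly Legendre--Hadamard elliptic; and it provides the smallness needed to absorb error terms.

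The core is a \emph{single-step excess-decay lemma}: there exist $\theta \in (0,1)$ (depending on $A$, hence on $M,F$) and, given $\alpha$, a smallness threshold $\eps_0$ such that if $[\nabla u]_{\BMO(B_r(y))}$ and $|(\nabla u)_{B_r(y)}|$ are controlled, then
\begin{equation*}
  \Phi(\theta r) \leq \tfrac12 \theta^{2\alpha}\, \Phi(r) + C\, \omega\big(\cdot\big)^2,
\end{equation*}
where $\Phi(\rho) := \fint_{B_\rho(y)} |\nabla u - (\nabla u)_{B_\rho(y)}|^2\,\d x$ is the mean-square excess and the last term collects the linearisation error. I would prove this by a \emph{comparison/blow-up argument}: compare $u$ on $B_r(y)$ with the solution $v$ of the constant-coefficient system $\div(F''((\nabla u)_{B_r(y)})\nabla v)=0$ having the same boundary data. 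Testing the Euler--Lagrange system against $u - v$ and using the strong monotonicity coming from (H2) on the compact set where the gradients live, one gets $\fint_{B_r}|\nabla u - \nabla v|^2$ bounded by the error incurred in replacing $F'(\nabla u)$ by its linearisation $F''((\nabla u)_{B_r})(\nabla u - (\nabla u)_{B_r})$; this error is estimated, via the $C^2$ regularity of $F$ and a standard splitting into the region where $|\nabla u - (\nabla u)_{B_r}|$ is small (use continuity of $F''$) and where it is large (use $q$-growth of $F''$ plus higher integrability), by $\sigma(\|\nabla u - (\nabla u)_{B_r}\|_{\text{small}})\cdot \Phi(r)$ plus a genuinely higher-integrability gain. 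The higher-integrability ingredient — a Gehring/Caccioppoli bound $\fint_{B_{r/2}}|\nabla u - (\nabla u)_{B_{r/2}}|^{2+\delta} \lesssim \big(\fint_{B_r}|\nabla u - (\nabla u)_r|^2\big)^{(2+\delta)/2}$ with the small-BMO hypothesis ensuring the Caccioppoli constant is uniform — is needed to control the "large" region and to handle $q > 2$. Combining with the decay estimate for $v$ and the triangle inequality gives the displayed recursion.

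The iteration then proceeds: starting from $[\nabla u]_{\BMO(B_1)} \le \eps$, one checks inductively that at each dyadic-type scale $\theta^k r$ and each centre $y \in B_{1/2}$ the two hypotheses of the one-step lemma persist — the BMO-smallness is essentially monotone in the scale (here the \emph{definition} of $\BMO$ adopted in Section~\ref{sec:BMO}, controlling oscillation on all subballs, is used crucially), and $|(\nabla u)_{B_{\theta^k r}(y)}|$ stays within a slightly enlarged ball around $M$ because the telescoping sum $\sum_k |(\nabla u)_{B_{\theta^{k+1}r}} - (\nabla u)_{B_{\theta^k r}}| \lesssim \sum_k \Phi(\theta^k r)^{1/2}$ is summable and small. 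This yields $\Phi(\rho,y) \lesssim \rho^{2\alpha}$ uniformly for $y \in B_{1/2}$, i.e. $\nabla u \in \cal L^{2,n+2\alpha}_{\loc}(B_{1/2})$, hence $\nabla u \in C^{0,\alpha}(\overline{B_{1/2}})$ by Campanato. Finally, once $\nabla u$ is known to be Hölder continuous, $F''(\nabla u)$ is a Hölder-continuous coefficient and classical Schauder theory upgrades $u$ to $C^{1,\alpha}(\overline{B_{1/2}})$ (this last step also does not require any convexity beyond Legendre--Hadamard, since Schauder estimates hold under that condition for the linearised operator).

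The main obstacle I anticipate is the linearisation-error estimate when $q > 2$: without lower $q$-growth one cannot assume $\nabla u \in L^q_{\loc}$ a priori for the correct exponent, so the higher-integrability exponent $\delta$ produced by Gehring may be small and one must carefully interpolate, keeping all constants independent of the ball and depending only on $M$, $F$, $\alpha$. A secondary subtlety is ensuring that $\eps$ can be chosen \emph{before} fixing the centre $y$ and scale — i.e. that the thresholds from the one-step lemma, the Gehring lemma, and the telescoping estimate can all be met simultaneously by one $\eps = \eps(M,F,\alpha)$ — which requires the modulus of continuity of $F''$ on the compact set $\{|z| \le M+1\}$ and the decay rate $\theta$ to be fixed first, and only then $\eps$ chosen small depending on them.
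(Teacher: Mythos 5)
Your overall architecture (freeze the coefficients near the average, compare with a solution of the constant-coefficient Legendre--Hadamard system, prove a one-step excess decay, iterate while controlling the averages telescopically, conclude via Campanato) is the same as the paper's. But the justification of your central comparison step contains a genuine gap: you bound $\dashint_{B_r}|\nabla u-\nabla v|^2$ by ``testing the Euler--Lagrange system against $u-v$ and using the strong monotonicity coming from (H2) on the compact set where the gradients live.'' Hypothesis (H2) is only a strict Legendre--Hadamard condition: $F''(z_0)$ is positive on rank-one matrices but may be indefinite on matrices of higher rank, so $z\mapsto F'(z)$ is \emph{not} strongly monotone, not even locally near a fixed $z_0$ (this is precisely the feature that separates this theorem from the uniformly convex case). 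Moreover the gradients do not ``live in a compact set'': $\nabla u$ is merely of small mean oscillation with bounded average, and may be unbounded. The correct substitute --- and what the paper uses --- is the coercivity of the \emph{frozen} quadratic form $F''(z_0)$ on $W^{1,2}_0$ gradient fields (a Plancherel/G\aa rding inequality, valid because $u-v$ vanishes on $\partial B_r$), combined with the perturbation estimate $|F''(z_0)z-F'_{z_0}(z)|\leq K_M\,\omega_M(|z|)(|z|+|z|^{q-1})$; the resulting error integral with the weight $\omega_M(|\nabla u-z_0|)$ is then controlled by the modular Fefferman--Stein inequality and the John--Nirenberg inequality, which convert BMO-smallness into the factor $\gamma(\eps)$.

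A second, related problem is your reliance on a Gehring/Caccioppoli higher-integrability step to handle $q>2$. First, it is unnecessary: the John--Nirenberg inequality already gives $\dashint_{B_r}|\nabla u-(\nabla u)_{B_r}|^{p}\leq C_p\seminorm{\nabla u}_{\BMO}^{p-2}\dashint_{B_r}|\nabla u-(\nabla u)_{B_r}|^{2}$ for every $p$, which is exactly how the paper absorbs the $|\cdot|^{q}$ and $|\cdot|^{2(q-1)}$ terms. Second, it is circular as stated: under Legendre--Hadamard ellipticity alone there is no off-the-shelf Caccioppoli inequality of the second kind for arbitrary extremals feeding into Gehring's lemma --- the Caccioppoli-type inequality the paper does prove (Lemma \ref{lem:autonomous_caccioppoli}) carries the factor $\gamma(\seminorm{\nabla u}_{\BMO})$ and its proof itself requires the Fefferman--Stein machinery, so you cannot invoke it as a standard ingredient prior to the BMO analysis. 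Note also that the paper's comparison is carried out at the level of $\frac1{R^2}\dashint|u-a_R-h|^2$ via a duality argument and then transferred back to gradients through the Caccioppoli-type inequality, whereas your direct gradient comparison, once repaired as above, would bypass that transfer; either route is viable, but only with the coercivity-on-$W^{1,2}_0$ plus Fefferman--Stein/John--Nirenberg input replacing monotonicity and Gehring.
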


We will follow a similar strategy to the partial regularity theory for minimisers, which traces back to the works of \textsc{Morrey} \cite{article:Morrey68} and \textsc{Giusti \& Miranda} \cite{article:GiustiMiranda68} in the variational setting. This will involve establishing a suitable \emph{Caccioppoli inequality} and a harmonic approximation result, which are combined in a final iteration argument. For the former we will use a modification of an estimate which appeared in \textsc{Moser} \cite{article:Moser01}, and for the latter we will follow a recent approach of \textsc{Gmeineder \& Kristensen} \cite{article:GmeinederKristensen19} adapted to our setting.

We will also establish an analogous result up to the boundary, using ideas from \textsc{Kronz} \cite{article:Kronz05} and \textsc{Campos Cordero} \cite{article:CamposCordero17}. We will prove this in Section \ref{sec:boundaryregularity}, and will rely on technical results established in Section \ref{sec:technicalprelims}. Here we denote $\Omega_R(x_0) = \Omega \cap B_R(x_0).$

\begin{thm}[Boundary $\BMO$ $\eps$-regularity theorem]\label{thm:autonomous_boundary_regularity}
  Suppose $F$ satisfies Hypotheses \ref{hyp:autonomous_F}, $\Omega \subset \bb R^n$ is a $C^{1,\beta}$ domain for some $\beta \in (0,1),$ $g \in C^{1,\beta}(\overline\Omega,\bb R^N),$ and let $u \in W^{1,q}_g(\Omega,\bb R^N)$ be $F$-extremal. Then for each $\alpha \in (0,\beta)$ and $M>0$ there is $\eps = \eps(M,F,\Omega,g,\beta,\alpha)>0$ and $\widetilde R_0 = \widetilde R_0(M,F,\Omega,g,\beta,\alpha)>0$ such that if $x_0 \in \partial\Omega$ and $0<R<\widetilde R_0$ with $\lvert (\nabla u)_{\Omega_R(x_0)}\rvert \leq M$ and  
  \begin{equation}
    \seminorm{\nabla u}_{\BMO(\Omega_R(x_0))} \leq \eps,
  \end{equation}
  we have $u$ is $C^{1,\alpha}$ on $\overline{\Omega_{R/2}(x_0)}.$
\end{thm}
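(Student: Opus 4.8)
The plan is to run the same Caccioppoli--harmonic-approximation--iteration scheme that underlies Theorem~\ref{thm:autonomous_regularity}, but relative to half-balls, after straightening the boundary. Since Theorem~\ref{thm:autonomous_regularity} already yields $C^{1,\alpha}$ regularity at interior points, it suffices to establish a Campanato-type excess decay for $\nabla u$ at a point $x_0\in\partial\Omega$, uniformly for nearby boundary points; a covering argument then combines the interior estimate with the boundary decay to give $u\in C^{1,\alpha}(\overline{\Omega_{R/2}(x_0)})$. As a first reduction I would pick, for $x_0\in\partial\Omega$ and $R$ small, a $C^{1,\beta}$ diffeomorphism $\Psi$ mapping $\Omega_R(x_0)$ onto a half-ball $B_\rho^+=B_\rho\cap\{y_n>0\}$ and $\partial\Omega\cap B_R(x_0)$ onto the flat face $\{y_n=0\}$. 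Setting $v=(u-g)\circ\Psi^{-1}$, the map $v$ vanishes on the flat face and solves on $B_\rho^+$ a quasilinear Euler--Lagrange system, say
\[
  -\div_y\!\big(\widetilde F'(y,\nabla v)\big)=0 \quad\text{in }B_\rho^+,
\]
whose $y$-dependence enters only through $C^{0,\beta}$ data built from $D\Psi$ and $\nabla g$, and which still satisfies a strict Legendre--Hadamard condition because (H2) is stable under this (bi-Lipschitz, $C^{0,\beta}$-derivative) change of variables. Since $\Psi$ and $\Psi^{-1}$ distort balls and gradients only by constants depending on $\Omega$ and $g$, the average bound and the smallness of $\seminorm{\nabla u}_{\BMO}$ transfer to $v$ after shrinking $\eps$ and $\widetilde R_0$.

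The next ingredient is a boundary Caccioppoli inequality: testing the transformed system with $\varphi=\zeta^2(v-\ell)$, where $\zeta$ is a cutoff and $\ell$ is affine and compatible with the homogeneous condition on the flat face (e.g.\ $\ell=0$ for the boundary excess), one controls $\int_{B^+_{\rho/2}}\lvert\nabla v\rvert^2$ by $\rho^{-2}\int_{B^+_\rho}\lvert v-\ell\rvert^2$ plus error terms that are $O(\rho^\beta)$ times natural energy quantities; these are harmless since we only demand $\alpha<\beta$. This is presumably among the technical results of Section~\ref{sec:technicalprelims} and follows by a routine boundary adaptation of Moser's estimate. With this in hand I would run the $\mathcal A$-harmonic approximation up to the flat boundary, following \cite{article:GmeinederKristensen19} adapted to half-balls: freezing the coefficients at $y=0$ and at the average $(\nabla v)_{B^+_\rho}$ produces a constant, strongly Legendre--Hadamard form $\mathcal A$, and one shows that if $v$ nearly solves the system and $\nabla v$ has small average and small $\BMO$-seminorm, then $\nabla v$ is $L^2$-close on $B^+_{\rho/2}$ to $\nabla h$ for some $\mathcal A$-harmonic $h$ agreeing with $v$ on the flat face. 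Classical up-to-the-flat-boundary estimates for constant-coefficient Legendre--Hadamard systems with homogeneous Dirichlet data then give the decay $\dashint_{B^+_{\theta\rho}}\lvert\nabla h-(\nabla h)_{B^+_{\theta\rho}}\rvert^2\lesssim\theta^2\dashint_{B^+_{\rho/2}}\lvert\nabla h-(\nabla h)_{B^+_{\rho/2}}\rvert^2$.

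Combining these two steps, one shows that the excess
\[
  \Phi(x_0,r)=\dashint_{\Omega_r(x_0)}\lvert\nabla u-(\nabla u)_{\Omega_r(x_0)}\rvert^2\,\d x
\]
satisfies an iterative estimate $\Phi(x_0,\theta r)\le C_*\theta^2\,\Phi(x_0,r)+C_*r^{2\alpha}$ for a fixed $\theta\in(0,1)$ chosen so that $C_*\theta^2\le\theta^{2\alpha}$; the $r^{2\alpha}$ term collects the $C^{0,\beta}$ coefficient and $C^{1,\beta}$ boundary-data errors, and the smallness of $\seminorm{\nabla u}_{\BMO}$ is precisely what makes the harmonic approximation applicable at \emph{every} scale $r<R$, not only the first, so the iteration closes. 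Standard manipulations then give $\Phi(x_0,r)\lesssim r^{2\alpha}$ uniformly for $x_0$ in a relative neighbourhood of $\partial\Omega$ in $\overline\Omega$; combined with Theorem~\ref{thm:autonomous_regularity} at interior points, Campanato's characterisation of Hölder continuity on $C^{1,\beta}$ domains (cf.\ \cite{article:Kronz05,article:CamposCordero17}) yields $u\in C^{1,\alpha}(\overline{\Omega_{R/2}(x_0)})$.

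The main obstacle is the bookkeeping between the three independent sources of error --- the $\BMO$ oscillation of $\nabla u$, the $C^{0,\beta}$ modulus of the flattened coefficients, and the $C^{1,\beta}$ boundary datum $g$ --- and verifying that all resulting constants, as well as the thresholds $\eps$ and $\widetilde R_0$, depend only on the quantities named in the statement. In particular the smallness must be exploited in a genuinely scale-invariant way so that a single $\eps$ works at all dyadic scales: this is exactly where a $\BMO$-smallness hypothesis, rather than a mere $\VMO$-type condition, earns its keep. A secondary technical point is producing, near the curved part of the boundary, affine comparison maps compatible with the non-affine datum $g$; this is handled by subtracting the first-order Taylor polynomial of $g$ and absorbing the $C^{1,\beta}$ remainder into the $r^{2\alpha}$ term, which is the precise reason for the restriction $\alpha<\beta$.
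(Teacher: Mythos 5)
Your route (flatten with a $C^{1,\beta}$ diffeomorphism, then run Caccioppoli plus harmonic approximation on half-balls) differs from the paper, which deliberately avoids flattening and instead works directly on $\Omega_R(x_0)$, using the defining function $\rho$ to build comparison maps, John-domain versions of the John--Nirenberg and modular Fefferman--Stein inequalities, and a smoothed domain $\widetilde\Omega_R$ for the linear reference estimates. Flattening is workable in principle (the paper's Section \ref{sec:general_elliptic} shows the scheme tolerates H\"older $x$-dependence), but your outline has a genuine gap at the boundary-specific step. Admissible test functions in the Caccioppoli argument must vanish on the flat face, so you can only subtract maps of the form $\xi y_n$; your suggestion ``$\ell=0$'' (or a generic affine $\ell$) does not produce an excess-decay inequality. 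With the forced choice $z_r=\xi_r\otimes e_n$, both the Caccioppoli step and the harmonic approximation produce the quantity $\lvert \nabla v - z_r\rvert$, and to exploit the $\BMO$-smallness through the modular Fefferman--Stein estimate you must compare $z_r$ with the full average $(\nabla v)_{B_r^+}$, i.e. you must show that the averages of the \emph{tangential} derivatives are controlled by the gradient oscillation plus $O(r^{\beta})$. This is exactly the Campos Cordero-type estimate (Lemma \ref{lem:boundary_aconstruction}, together with Kronz's comparison in Remark \ref{rem:affine_subtraction}), proved by integrating by parts and using that $v$ vanishes on the boundary portion; it is a necessary ingredient, not bookkeeping, and without it the iteration for the excess does not close. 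Your closing remark about subtracting the Taylor polynomial of $g$ addresses the datum, not this point.

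Two further places where your reductions need more than ``shrinking $\eps$ and $\widetilde R_0$''. First, the transfer of $\BMO$-smallness to $v=(u-g)\circ\Psi^{-1}$ is not immediate: $\nabla v$ is a product of a (possibly unbounded) $\BMO$ field with the $C^{0,\beta}$ matrix $D\Psi^{-1}$, and since local averages of $\nabla u$ may grow logarithmically in the scale, one must estimate the extra oscillation term $r^{\beta}\bigl(M+\eps\log(R/r)\bigr)$ and absorb it into the choice of $\widetilde R_0$; this works but has to be said. Second, the half-ball $B_\rho^+$ has a Lipschitz edge, so the up-to-the-boundary Schauder estimate for $h$ and, more delicately, the $L^{2^*}$ gradient bound for the dual problem used in the Gmeineder--Kristensen duality argument are not directly covered by smooth-domain theory; the paper circumvents this by solving on the intermediate $C^{1,\beta}$ domain $\widetilde\Omega_R$ (Lemma \ref{lem:boundary_linearelliptic}), and in your setting one needs an analogous device (or a justification of Calder\'on--Zygmund estimates on the half-ball) rather than an appeal to ``classical up-to-the-flat-boundary estimates''.
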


Patching these local regularity results we can infer global consequences, for which we will need some notation. Following \cite{article:Mazya10}, we define the \emph{infinitesimal mean oscillation} of $f \in \BMO(\Omega,\bb R^{Nn})$ as
\begin{equation}
  \{f\}_{\mathrm{osc}(\Omega)} = \limsup_{R \to 0} \sup_{\substack{B_r(x) \subset \Omega \\0<r<R}} \dashint_{B_r(x)} \lvert f-(f)_{B_r(x)}\rvert \,\d x.
\end{equation}
Note that $\{f\}_{\mathrm{osc}(\Omega)} = 0$ if and only if $f \in \VMO(\Omega,\bb R^{Nn}).$

\begin{cor}[Regularity of almost $\VMO$ Lipschitz solutions]\label{cor:lipschitzvmo_regularity}
  Suppose $F$ satisfies Hypotheses \ref{hyp:autonomous_F}, let $\Omega \subset \bb R^n$ be a $C^{1,\beta}$ domain for some $\beta \in (0,1)$ and $g \in C^{1,\beta}(\overline\Omega,\bb R^N).$ Then for each $M>0$ and $\alpha \in (0,\beta),$ there is $\eps = \eps(M,F,\Omega,g,\beta,\alpha)>0$ such that if $u \in W^{1,\infty}_g(\Omega,\bb R^N)$ is $F$-extremal such that $\norm{\nabla u}_{L^{\infty}(\Omega)} \leq M,$ and
  \begin{equation}
    \{\nabla u\}_{\mathrm{osc}(\Omega)} \leq \eps,
  \end{equation}
  then $u \in C^{1,\alpha}(\overline\Omega,\bb R^N).$
\end{cor}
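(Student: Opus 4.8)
The plan is to deduce the global statement by patching the interior estimate (Theorem \ref{thm:autonomous_regularity}) and the boundary estimate (Theorem \ref{thm:autonomous_boundary_regularity}) via a standard covering argument, the only subtlety being to produce, from the single global hypothesis $\{\nabla u\}_{\mathrm{osc}(\Omega)} \le \eps$, the \emph{uniform} smallness of $[\nabla u]_{\BMO}$ on all the small balls appearing in the two local theorems. First I would fix $M$ and $\alpha \in (0,\beta)$, and let $\eps_{\mathrm{int}} = \eps_{\mathrm{int}}(M,F,\alpha)$ and $\eps_{\mathrm{bdy}} = \eps_{\mathrm{bdy}}(M,F,\Omega,g,\beta,\alpha)$, $\widetilde R_0 = \widetilde R_0(M,F,\Omega,g,\beta,\alpha)$ be the constants furnished by Theorems \ref{thm:autonomous_regularity} and \ref{thm:autonomous_boundary_regularity}; set $\eps_0 = \tfrac12\min\{\eps_{\mathrm{int}},\eps_{\mathrm{bdy}}\}$. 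Since $\norm{\nabla u}_{L^\infty(\Omega)} \le M$ we automatically have $\lvert(\nabla u)_{B}\rvert \le M$ for every ball $B$, so the mean-value hypotheses in both local theorems hold on every subball; it remains to control the oscillation.

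Next I would unwind the definition of $\{\nabla u\}_{\mathrm{osc}(\Omega)}$. By hypothesis there is $R_1 > 0$ such that $\dashint_{B_r(x)}\lvert \nabla u - (\nabla u)_{B_r(x)}\rvert \,\d x \le \eps_0$ for every ball $B_r(x) \subset \Omega$ with $r < R_1$. Averaging over dyadic subballs in the usual way, this gives $[\nabla u]_{\BMO(B_\rho(x_0))} \lesssim \eps_0$ for every ball $B_\rho(x_0)$ with $B_\rho(x_0)\subset\Omega$ and $\rho < R_1$ (the implied constant being the dimensional constant relating the BMO seminorm to oscillations over interior subballs); shrinking $R_1$ slightly if necessary we may arrange that this bound is $\le \eps_{\mathrm{int}}$. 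For the boundary I would use a localisation near $\partial\Omega$: since $\partial\Omega$ is $C^{1,\beta}$ and compact, there is $\delta > 0$ such that for every $x_0\in\partial\Omega$ the set $\Omega_\rho(x_0)$ with $\rho<\delta$ is bi-Lipschitz equivalent (with uniform constants) to a half-ball, whence one can compare $[\nabla u]_{\BMO(\Omega_\rho(x_0))}$ with a finite average of interior oscillations over balls of comparable radius contained in $\Omega$; combined with the previous step this yields $[\nabla u]_{\BMO(\Omega_\rho(x_0))} \le \eps_{\mathrm{bdy}}$ for all $x_0\in\partial\Omega$ and all $\rho < R_2$, for some $R_2 = R_2(M,F,\Omega,g,\beta,\alpha,\eps_0) > 0$. (This last inequality is really just the statement that the infinitesimal oscillation controls the BMO seminorm on small relative balls up to the boundary, which is where the geometry of $\Omega$ enters.)

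Finally I would run the covering argument. Set $R_* = \min\{R_1, R_2, \widetilde R_0, \tfrac12\dist(\cdot)\text{-scale}\}$ appropriately. For each interior point $x \in \Omega$ with $\dist(x,\partial\Omega) > R_*/2$, pick a ball $B_{R}(x) \subset \Omega$ with $R < R_*$; Theorem \ref{thm:autonomous_regularity} applies and gives $u \in C^{1,\alpha}(\overline{B_{R/2}(x)})$. For each $x_0 \in \partial\Omega$, pick $R < R_*$; Theorem \ref{thm:autonomous_boundary_regularity} applies and gives $u \in C^{1,\alpha}(\overline{\Omega_{R/2}(x_0)})$. These neighbourhoods cover $\overline\Omega$, and by compactness finitely many $\{\overline{B_{R_i/2}(x_i)}\}_{i\in I_{\mathrm{int}}}$, $\{\overline{\Omega_{R_j/2}(x_j)}\}_{j\in I_{\mathrm{bdy}}}$ suffice; since $C^{1,\alpha}$ is a local property and the pieces overlap in open sets, patching the estimates gives $u\in C^{1,\alpha}(\overline\Omega,\bb R^N)$, with the constant $\eps = \eps_0$ depending only on $M,F,\Omega,g,\beta,\alpha$ as claimed. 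I expect the only genuine obstacle to be the boundary comparison in the second step — passing from the flat infinitesimal-oscillation hypothesis to uniform BMO-smallness on relative balls $\Omega_\rho(x_0)$ — but this is exactly the kind of bi-Lipschitz flattening estimate developed for the proof of Theorem \ref{thm:autonomous_boundary_regularity}, so I would simply invoke the relevant lemma from Section \ref{sec:technicalprelims}.
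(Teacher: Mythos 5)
Your proposal is correct and is essentially the argument the paper intends: the $L^\infty$ bound supplies the uniform control $\lvert(\nabla u)_{B}\rvert,\lvert(\nabla u)_{\Omega_\rho(x_0)}\rvert\leq M$ (which is exactly why the Lipschitz hypothesis is there), the infinitesimal oscillation hypothesis gives $\BMO$-smallness at every sufficiently small scale, and Theorems \ref{thm:autonomous_regularity} and \ref{thm:autonomous_boundary_regularity} are then patched by a finite covering of $\overline\Omega$. The only superfluous step is your boundary comparison: since the paper defines $\seminorm{\nabla u}_{\BMO(\Omega_\rho(x_0))}$ through the Fefferman--Stein maximal function, i.e.\ as a supremum over balls $B \subset \Omega_\rho(x_0) \subset \Omega$ of radius less than $\rho$, the required smallness on relative half-balls follows directly from $\{\nabla u\}_{\mathrm{osc}(\Omega)} \leq \eps$ once $\rho$ is small, with no bi-Lipschitz flattening or auxiliary lemma from Section \ref{sec:technicalprelims} needed.
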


It is unclear if the Lipschitz assumption can be removed; the infinitesimal mean oscillation assumption requires us to consider balls of arbitrarily small radius, which in turn requires a uniform bound on all averages $\lvert (\nabla u)_{\Omega_R(x_0)}\rvert$ for all $x_0 \in \overline\Omega$ and $R>0$ small. However this is equivalent to assuming $\nabla u$ is bounded by the Lebesgue differentiation theorem.

We point out that \textsc{Dolzmann, Kristensen, \& Zhang} \cite{article:DolzmannEtAl13} constructed an example of a minimiser of a quasiconvex integrand who gradient is unbounded but lies in $\BMO.$ We will later investigate whether this Lipschitz assumption can be relaxed under further assumptions, but for the moment we will record several other straightforward consequences.

\begin{cor}[Partial regularity of $\VMO$ solutions]
  Suppose $F$ satisfies Hypotheses \ref{hyp:autonomous_F}, let $\Omega \subset \bb R^n$ be a $C^{1,\beta}$ domain for some $\beta \in (0,1)$ and $g \in C^{1,\beta}(\overline\Omega,\bb R^N).$ Then if $u \in W^{1,q}_g(\Omega,\bb R^N)$ is $F$-extremal such that $\nabla u \in \VMO(\Omega;\bb R^{Nn}),$ letting
  \begin{equation}
    \cal R_{\overline\Omega} = \left\{ x \in \overline\Omega : \limsup_{R \to 0} \lvert (\nabla u)_{\Omega_R(x_0)}\rvert < \infty \right\},
  \end{equation}
  we have $\cal R_{\overline\Omega} \subset \overline\Omega$ is a relatively open subset of full measure and $u$ is $C^{1,\alpha}$ on $\cal R_{\overline\Omega}$ for all $\alpha \in (0,\beta).$
\end{cor}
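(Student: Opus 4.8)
The plan is to obtain the corollary by localising the two $\eps$-regularity theorems, Theorem~\ref{thm:autonomous_regularity} and Theorem~\ref{thm:autonomous_boundary_regularity}, with the hypothesis $\nabla u \in \VMO(\Omega)$ serving only to supply smallness of the relevant local $\BMO$ seminorm. Fix $\alpha \in (0,\beta)$ and a point $x_0 \in \cal R_{\overline\Omega}$; by definition $M_0 := \limsup_{R \to 0} \lvert (\nabla u)_{\Omega_R(x_0)} \rvert < \infty$, so set $M := M_0 + 1$. By the definition of $\VMO$ adopted in Section~\ref{sec:BMO}, $\seminorm{\nabla u}_{\BMO(\Omega_R(x_0))} \to 0$ as $R \downarrow 0$; together with $\limsup_{R \to 0}\lvert(\nabla u)_{\Omega_R(x_0)}\rvert < M$ this lets us fix $R > 0$ small enough that $\seminorm{\nabla u}_{\BMO(\Omega_R(x_0))} \le \eps$ and $\lvert(\nabla u)_{\Omega_R(x_0)}\rvert \le M$ for the appropriate $\eps$, and, when $x_0 \in \partial\Omega$, also $R < \widetilde R_0$. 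If $x_0 \in \Omega$ we further shrink $R$ so that $B_R(x_0) \subset \Omega$ (hence $\Omega_R(x_0) = B_R(x_0)$) and apply Theorem~\ref{thm:autonomous_regularity} with $\eps = \eps(M,F,\alpha)$; if $x_0 \in \partial\Omega$ we apply Theorem~\ref{thm:autonomous_boundary_regularity} with the $\eps$ and $\widetilde R_0$ it provides. Either way $u \in C^{1,\alpha}(\overline{\Omega_{R/2}(x_0)})$.

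Next I would upgrade this pointwise conclusion to relative openness of $\cal R_{\overline\Omega}$. Put $V := \overline\Omega \cap B_{R/2}(x_0)$, a relative neighbourhood of $x_0$ in $\overline\Omega$ contained in $\overline{\Omega_{R/2}(x_0)}$, so that $\nabla u$ is continuous on $V$. For any $y \in V$ the sets $\Omega_r(y) \subset B_r(y)$ have positive measure and shrink to $y$ as $r \downarrow 0$, so continuity of $\nabla u$ gives $(\nabla u)_{\Omega_r(y)} \to \nabla u(y)$; hence $\limsup_{r \to 0}\lvert(\nabla u)_{\Omega_r(y)}\rvert = \lvert\nabla u(y)\rvert < \infty$ and $y \in \cal R_{\overline\Omega}$. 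Thus $V \subset \cal R_{\overline\Omega}$, proving $\cal R_{\overline\Omega}$ is relatively open; and since being $C^{1,\alpha}$ is a local property and $\alpha \in (0,\beta)$ was arbitrary, $u \in C^{1,\alpha}(\cal R_{\overline\Omega})$ for every $\alpha \in (0,\beta)$.

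For the full-measure claim I would write $\overline\Omega \setminus \cal R_{\overline\Omega} \subset (\Omega \setminus \cal R_{\overline\Omega}) \cup \partial\Omega$. The boundary of a $C^{1,\beta}$ domain is a Lipschitz hypersurface and so Lebesgue-null. For the interior part, $\nabla u \in L^q_{\loc}(\Omega) \subset L^1_{\loc}(\Omega)$, so the Lebesgue differentiation theorem gives $(\nabla u)_{B_r(x)} \to \nabla u(x)$ for a.e.\ $x \in \Omega$; for such $x$ one has $\Omega_r(x) = B_r(x)$ once $r$ is small, whence $x \in \cal R_{\overline\Omega}$. Therefore $\lvert\overline\Omega \setminus \cal R_{\overline\Omega}\rvert = 0$, which finishes the argument.

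The corollary is essentially a patching of the two $\eps$-regularity theorems, so I do not anticipate a genuine obstacle; the two points that need a little attention are (i) confirming, from the $\BMO(\Omega_R(x_0))$ convention fixed in Section~\ref{sec:BMO}, that $\nabla u \in \VMO(\Omega)$ really does force $\seminorm{\nabla u}_{\BMO(\Omega_R(x_0))} \to 0$ at boundary points $x_0 \in \partial\Omega$ and not merely at interior ones, and (ii) the mildly bootstrap-flavoured step in the second paragraph, where the $C^{1,\alpha}$ regularity obtained at $x_0$ is fed back — through continuity of $\nabla u$ — into the finiteness of the averaged gradients at every nearby point, which is what makes $\cal R_{\overline\Omega}$ open rather than merely dense.
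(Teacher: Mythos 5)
Your proposal is correct and is essentially the argument the paper intends: the corollary is stated as a straightforward consequence of patching Theorems \ref{thm:autonomous_regularity} and \ref{thm:autonomous_boundary_regularity}, with the $\VMO$ hypothesis supplying the smallness of $\seminorm{\nabla u}_{\BMO(\Omega_R(x_0))}$ for small $R$ (which indeed works at boundary points, since every ball contained in $\Omega_R(x_0)$ is a ball in $\Omega$ of radius at most $R$), the local $C^{1,\alpha}$ conclusion giving relative openness of $\cal R_{\overline\Omega}$, and Lebesgue differentiation plus the null boundary giving full measure.
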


We can also obtain a global regularity result if we assume $\nabla u$ is suitably small in both $L^1$ and $\BMO.$ The $L^1$ smallness condition allows us to cover $\overline\Omega$ by balls finitely many balls $B_{R_k}(x_k)$ such that each $\lvert (\nabla u)_{\Omega_{R_k}(x_k)}\rvert \leq 1 + \seminorm{\nabla g}_{L^{\infty}(\Omega)},$ on which we can apply our $\eps$-regularity result to obtain the following.

\begin{cor}[Regularity of $\BMO$-small solutions]
  Suppose $F$ satisfies Hypotheses \ref{hyp:autonomous_F}, let $\Omega \subset \bb R^n$ be a $C^{1,\beta}$ domain for some $\beta \in (0,1)$ and $g \in C^{1,\beta}(\overline\Omega,\bb R^N).$ Then for each $\alpha \in (0,\beta)$ there is $\eps = \eps(F,\Omega,g,\beta,\alpha)>0$ such that if $u \in W^{1,q}_g(\Omega,\bb R^N)$ is $F$-extremal in $\Omega$ with $\nabla u \in \BMO(\Omega,\bb R^{Nn})$ satisfying
  \begin{equation}
    \norm{\nabla u-\nabla g}_{L^1(\Omega)} + \seminorm{\nabla u}_{\BMO(\Omega)} \leq \eps,
  \end{equation}
  then $u \in C^{1,\alpha}(\overline\Omega,\bb R^N).$
\end{cor}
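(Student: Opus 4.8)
The plan is to reduce the statement to the two local $\eps$-regularity theorems, Theorems~\ref{thm:autonomous_regularity} and~\ref{thm:autonomous_boundary_regularity}, by covering $\overline\Omega$ with finitely many balls on which the averaged gradient stays bounded, along the lines indicated in the paragraph preceding the corollary. First I would fix $\alpha \in (0,\beta)$ and set $M := 1 + \norm{\nabla g}_{L^\infty(\Omega)}$, which is finite since $g \in C^{1,\beta}(\overline\Omega,\bb R^N)$; feeding $M$ and $\alpha$ into the two theorems produces thresholds whose minimum I call $\eps_0 = \eps_0(F,\Omega,g,\beta,\alpha)>0$, together with a radius $\widetilde R_0 = \widetilde R_0(F,\Omega,g,\beta,\alpha)>0$ from the boundary theorem. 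I would also use that, $\Omega$ being a bounded $C^{1,\beta}$ (hence Lipschitz) domain, there is $c_\Omega>0$ with $\lvert\Omega \cap B_r(x)\rvert \geq c_\Omega r^n$ for all $x \in \overline\Omega$ and $0 < r \leq \widetilde R_0$.

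To build the covering, I would cover $\partial\Omega$ by finitely many balls $B_{r_0}(y_j)$, $j = 1, \dots, J$, with $y_j \in \partial\Omega$ and $r_0 := \widetilde R_0/4$; then $S := \overline\Omega \setminus \bigcup_j B_{r_0}(y_j)$ is a compact subset of $\Omega$, so with $\rho := \tfrac12\dist(S,\partial\Omega)>0$ one can cover $S$ by finitely many balls $B_{\rho/2}(x_k)$, $k = 1, \dots, K$, with $x_k \in S$ and each $B_\rho(x_k) \subset \Omega$. I would then set $\eps := \min\{\eps_0,\, \omega_n\rho^n,\, c_\Omega(2r_0)^n\}$, depending only on $F,\Omega,g,\beta,\alpha$. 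Now let $u$ be as in the statement with $\norm{\nabla u - \nabla g}_{L^1(\Omega)} + \seminorm{\nabla u}_{\BMO(\Omega)} \leq \eps$. For each $V$ among the balls $B_\rho(x_k)$ and the sets $\Omega_{2r_0}(y_j) := \Omega \cap B_{2r_0}(y_j)$ we have $V \subseteq \Omega$, so $\seminorm{\nabla u}_{\BMO(V)} \leq \seminorm{\nabla u}_{\BMO(\Omega)} \leq \eps_0$ by monotonicity of the $\BMO$ seminorm under restriction (Section~\ref{sec:BMO}), and, since $\lvert V\rvert \geq \min\{\omega_n\rho^n, c_\Omega(2r_0)^n\} \geq \eps$,
\[
  \lvert (\nabla u)_V\rvert \leq \norm{\nabla g}_{L^\infty(\Omega)} + \frac{1}{\lvert V\rvert}\int_\Omega \lvert \nabla u - \nabla g\rvert \,\d x \leq \norm{\nabla g}_{L^\infty(\Omega)} + 1 = M.
\]
Hence Theorem~\ref{thm:autonomous_regularity} applies to $u$ on each $B_\rho(x_k)$ and Theorem~\ref{thm:autonomous_boundary_regularity} to $u$ on each $\Omega_{2r_0}(y_j)$ (note $2r_0 < \widetilde R_0$), giving that $\nabla u$ is $\alpha$-H\"older continuous on each of the closed sets $\overline{B_{\rho/2}(x_k)}$ and $\overline{\Omega_{r_0}(y_j)} \supseteq B_{r_0}(y_j) \cap \overline\Omega$; by construction these sets cover $\overline\Omega$.

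To finish, I would patch these local estimates. The open balls $\{B_{\rho/2}(x_k)\}_k \cup \{B_{r_0}(y_j)\}_j$ form a finite open cover of the compact set $\overline\Omega$, so they admit a Lebesgue number $\delta > 0$; moreover $\nabla u$, being continuous on each of the finitely many closed pieces above, is continuous on all of $\overline\Omega$ and hence bounded, and is $\alpha$-H\"older on each piece with some common constant $C$. For $p, q \in \overline\Omega$ with $\lvert p - q\rvert < \delta$ the pair $\{p,q\}$ lies in a single piece, so $\lvert \nabla u(p) - \nabla u(q)\rvert \leq C\lvert p-q\rvert^\alpha$, while for $\lvert p-q\rvert \geq \delta$ one has $\lvert \nabla u(p) - \nabla u(q)\rvert \leq 2\norm{\nabla u}_{L^\infty(\Omega)}\delta^{-\alpha}\lvert p-q\rvert^\alpha$; thus $\nabla u \in C^{0,\alpha}(\overline\Omega,\bb R^{Nn})$, that is $u \in C^{1,\alpha}(\overline\Omega,\bb R^N)$.

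I do not expect a genuine obstacle: the substance is entirely in Theorems~\ref{thm:autonomous_regularity} and~\ref{thm:autonomous_boundary_regularity}, and everything else is elementary. The points needing a little care are the uniform lower volume bound $\lvert\Omega \cap B_r(x)\rvert \gtrsim r^n$ — which is precisely what turns the $L^1$-smallness of $\nabla u - \nabla g$ into smallness of the averages $\lvert(\nabla u)_V\rvert$, and which uses the Lipschitz regularity of $\partial\Omega$ — and the slightly fiddly bookkeeping in the covering step, ensuring that the half-balls on which one actually obtains regularity still cover $\overline\Omega$ while all boundary radii remain below $\widetilde R_0$.
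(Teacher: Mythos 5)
Your proposal is correct and follows essentially the same route the paper indicates in the paragraph preceding the corollary: the $L^1$-smallness combined with a uniform lower volume bound for $\Omega\cap B_r$ forces $\lvert(\nabla u)_V\rvert \leq 1+\norm{\nabla g}_{L^\infty(\Omega)}$ on a fixed finite cover, after which Theorems \ref{thm:autonomous_regularity} and \ref{thm:autonomous_boundary_regularity} apply on each piece and the local H\"older estimates are patched. Your write-up simply makes the covering and patching details explicit, which the paper leaves implicit.
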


Finally we will show that the Lipschitz condition in Corollary \ref{cor:lipschitzvmo_regularity} can be removed it we assume the following uniformly controlled growth condition.

\begin{hyp}\label{hyp:controlled_F}
  For $n \geq 2,$ $N \geq 1$ and $p \geq 2,$ let $F \colon \bb R^{Nn} \to \bb R$ satisfy the following.

  \begin{enumerate}[label=(\~H\arabic*)]
    \setcounter{enumi}{-1}

    \item $F$ is of class $C^2.$

    \item We have $(1+\lvert z\rvert^2)^{-(p-2)}F''(z)$ is bounded and uniformly continuous on $\bb R^{Nn}.$\label{item:controlled_growth}

    \item For all $z \in \bb R^{N n}$ we have
      \begin{equation*}\label{eq:controlled_legendrehadamard}
        F''(z)(\xi \otimes \eta) \cdot (\xi \otimes \eta) \geq \lambda (1+\lvert z\rvert)^{p-2} \lvert \xi \rvert^2 \lvert\eta\rvert^2
      \end{equation*}
      for all $\xi \in \bb R^N$ and $\eta \in \bb R^n.$

  \end{enumerate}
\end{hyp}

Note the growth and continuity hypothesis \ref{item:controlled_growth} is satisfied if $F$ is a polynomial; in particular this includes the example of \textsc{\v{S}ver\'ak} \cite{article:Sverak92}.

\begin{thm}[$\BMO$ $\eps$-regularity in the uniformly elliptic case]\label{thm:controlled_almostvmo}
  Suppose $F \colon \bb R^{Nn} \to \bb R$ satisfies Hypotheses \ref{hyp:controlled_F}, let $\Omega \subset \bb R^N$ be a bounded $C^{1,\beta}$ domain, and let $g \in C^{1,\beta}(\overline\Omega,\bb R^N)$ for some $\beta \in (0,1).$
  Then for each $\alpha \in (0,\beta)$ there is $\eps=\eps(M,F,\Omega,g,\beta,\alpha)>0$ such that if $u \in W^{1,p}_g(\Omega;\bb R^N)$ is $F$-extremal in $\Omega$ and
  \begin{equation}
    \{\nabla u\}_{\mathrm{osc}(\Omega)} \leq \eps,
  \end{equation}
  then $u \in C^{1,\alpha}(\overline\Omega,\bb R^N).$
\end{thm}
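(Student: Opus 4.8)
The plan is to bootstrap Theorem~\ref{thm:autonomous_boundary_regularity} (and its interior counterpart, Theorem~\ref{thm:autonomous_regularity}) by removing the a-priori $L^\infty$ bound on $\nabla u$ that was needed in Corollary~\ref{cor:lipschitzvmo_regularity}. The obstruction there was precisely that the infinitesimal oscillation controls $\nabla u$ on balls of arbitrarily small radius, but to apply the $\eps$-regularity theorems one also needs a uniform bound $\lvert (\nabla u)_{\Omega_R(x_0)}\rvert \le M$ on the averages; absent a Lipschitz hypothesis this is not available. Under Hypotheses~\ref{hyp:controlled_F} the integrand is uniformly elliptic with $(1,p)$-type growth in $F''$, and the point is that this uniformity makes the constants in the Caccioppoli inequality and in the harmonic (i.e.\ $\cal A$-harmonic, with $\cal A = F''(z_0)$) approximation lemma \emph{uniform in the base point} $z_0 = (\nabla u)_{\Omega_R(x_0)}$, so the whole iteration scheme of Sections~\ref{sec:interiorregularity}--\ref{sec:boundaryregularity} closes with an $\eps$ and an excess-decay rate that do not degenerate as $\lvert z_0\rvert \to \infty$.

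First I would revisit the excess-decay/iteration lemma underlying Theorems~\ref{thm:autonomous_regularity} and~\ref{thm:autonomous_boundary_regularity} and track the dependence of $\eps$ on the average $M$. In the Legendre--Hadamard setting of Hypotheses~\ref{hyp:autonomous_F} the comparison operator $F''(z_0)$ has ellipticity and continuity moduli that depend on $z_0$, which is why $\eps = \eps(M,\dots)$. Under Hypotheses~\ref{hyp:controlled_F}, after the standard rescaling $z \mapsto (1+\lvert z_0\rvert^2)^{-(p-2)/2}$-type normalisation, the normalised second derivative $(1+\lvert z_0 + w\rvert^2)^{-(p-2)}F''(z_0+w)$ is \emph{globally} bounded and uniformly continuous (by \ref{item:controlled_growth}) and bounded below by $\lambda$ (by the ellipticity in Hypotheses~\ref{hyp:controlled_F}), with moduli independent of $z_0$. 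Hence the Caccioppoli inequality and the $\cal A$-harmonic approximation lemma hold with constants independent of $z_0$, and re-running the iteration argument from Section~\ref{sec:interiorregularity} (respectively Section~\ref{sec:boundaryregularity}) yields $\eps_0>0$ and $\theta \in (0,1)$, depending only on $F,\Omega,g,\beta,\alpha$, such that the excess decays geometrically once $\seminorm{\nabla u}_{\BMO(\Omega_R(x_0))} \le \eps_0$, \emph{with no hypothesis on $\lvert(\nabla u)_{\Omega_R(x_0)}\rvert$}.

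Given this uniform $\eps$-regularity statement, I would then cover $\overline\Omega$ by finitely many balls. By definition of $\{\nabla u\}_{\mathrm{osc}(\Omega)}$, if $\{\nabla u\}_{\mathrm{osc}(\Omega)} \le \eps < \eps_0$ then there is $R_0>0$ such that $\seminorm{\nabla u}_{\BMO(\Omega_r(x))} \le \eps_0$ for every $x \in \overline\Omega$ and $0<r<R_0$ (for boundary points one passes from interior balls to half-balls $\Omega_r(x)$ via the standard extension/geometry arguments of Section~\ref{sec:technicalprelims}, shrinking $R_0$ if necessary depending on $\Omega$). Fixing such an $r < \min(R_0,\widetilde R_0)$ and applying the uniform version of Theorem~\ref{thm:autonomous_regularity} at each interior point and the uniform version of Theorem~\ref{thm:autonomous_boundary_regularity} at each boundary point, we get that $u$ is $C^{1,\alpha}$ on $\overline{\Omega_{r/2}(x)}$ for every $x$; since $\{\overline{\Omega_{r/2}(x)} : x \in \overline\Omega\}$ covers the compact set $\overline\Omega$ and $\alpha<\beta$ is arbitrary, $u \in C^{1,\alpha}(\overline\Omega,\bb R^N)$. (The parameter $M$ in the statement is then vacuous, or one may simply note $\eps$ may be taken independent of $M$; I would phrase the conclusion with $\eps = \eps(F,\Omega,g,\beta,\alpha)$.)

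The main obstacle is the first step: verifying that \emph{every} constant in the existing proofs can be made uniform in $z_0$. The delicate points are (i) the Caccioppoli inequality, where terms of the form $\lvert F'(\nabla u) - F'(z_0) - F''(z_0)(\nabla u - z_0)\rvert$ must be estimated using only the normalised continuity modulus of $F''$ from \ref{item:controlled_growth} together with the uniform ellipticity, rather than local boundedness of $F''$ on a ball around $z_0$; and (ii) the harmonic approximation step, where the linearised system has coefficients $F''(z_0)$ whose ellipticity/continuity constants after normalisation must be controlled uniformly so that the Schauder-type estimates for the comparison function have $z_0$-independent constants. Both are made to work precisely by the scaling built into Hypotheses~\ref{hyp:controlled_F}: dividing through by $(1+\lvert z_0\rvert^2)^{p-2}$ turns the $z_0$-dependent problem into one governed by the single uniformly elliptic, uniformly continuous tensor field in \ref{item:controlled_growth}. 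Once this uniformity is in hand, the covering argument is routine and identical in spirit to the proof of the preceding corollaries.
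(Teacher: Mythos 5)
Your first step is on target and matches the paper's strategy: under Hypotheses \ref{hyp:controlled_F} one proves (this is exactly Lemma \ref{lem:controlled_estimates} in the paper) that the shifted-integrand estimates hold with the factor $(1+\lvert w\rvert)^{p-2}$ and a single modulus $\omega$, so that $K_M/\lambda_M$ and $\omega_M$ are independent of $M$; consequently the constants $C$ and the function $\gamma$ in the Caccioppoli and harmonic-approximation steps, and hence in the excess-decay estimate, do not depend on the size of the averages. The choice $\eps=\eps(F,\Omega,g,\beta,\alpha)$ independent of $M$ is also how the paper proceeds.

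The gap is in your claim that the excess then ``decays geometrically \ldots\ with no hypothesis on $\lvert(\nabla u)_{\Omega_R(x_0)}\rvert$,'' and in the covering argument built on it. The boundary excess-decay inequality is not homogeneous: it carries an additive term of the form
\begin{equation}
  C\bigl(1+\lvert(\nabla u)_{\Omega_{2\sigma r}(x)}\rvert+\lvert(\nabla u)_{\Omega_{r}(x)}\rvert\bigr)^{2}\,\sigma^{-(n+2)}r^{2\beta},
\end{equation}
coming from the boundary datum $g$ and the curvature of $\partial\Omega$ (the estimates $\lvert z_R-\nabla a_R-\nabla g\rvert\leq CMR^{\beta}$ etc.), and this term survives even with uniform ellipticity. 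Since no Lipschitz bound is assumed, the local averages $\lvert(\nabla u)_{\Omega_r(x)}\rvert$ are not a priori bounded and may grow as $r\to0$, so one cannot simply invoke an ``$M$-free'' boundary $\eps$-regularity theorem at a fixed small scale: the admissible starting radius $\widetilde R_0$ genuinely depends on the size of the averages, and your fixed-scale covering argument leaves this interplay unresolved. The paper closes this by an extra quantitative observation you are missing: once $\seminorm{\nabla u}_{\BMO}\leq 2\eps\leq 1$ on a finite cover at scale $R$, the averages can grow at most logarithmically, $\lvert(\nabla u)_{\Omega_r(x)}\rvert\leq C(n)M\bigl(1+\log(R/r)\bigr)$ for $0<r<R$, so the inhomogeneous term is bounded by $CM^{2}\sigma^{-(2n+2)}r^{2\beta}\bigl(1+\log(R/r)\bigr)$; choosing a starting radius $r_0$ (depending on $u$ through $M$, which is harmless for the qualitative conclusion) so that $CM^{2}\sigma^{-(2n+2)}r_0^{2(\beta-\alpha)}\bigl(1+\log(R/r_0)\bigr)\leq\sigma^{2\alpha}$ yields $E(x,\sigma r)\leq\tfrac12\sigma^{2\alpha}E(x,r)+(\sigma r)^{2\alpha}$, after which the iteration of Section \ref{sec:boundary_regularityproof} applies. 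Without this logarithmic control of the averages (or some substitute), your iteration does not close at the boundary.
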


\subsection{Connection to minimisers and quasiconvexity}

In the context of strictly quasiconvex integrands, there is a close connection between sufficiency results (whether extremals are minimising) and regularity of the extremal. One of the early results in the quasiconvex setting is due to \textsc{Zhang} \cite{article:Zhang92}, who showed that a $C^2$ extremal is absolutely minimising on small balls $B \subset \Omega.$ In the opposite direction, it was shown by \textsc{Kristensen \& Taheri} in \cite[Theorem 4.1]{article:KristensenTaheri03} that if $u \in W^{1,p} \cap W^{1,q}_{\loc}$ is a $W^{1,q}$-local minimiser for some $1 \leq q \leq \infty,$ then we can establish a partial regularity theorem (we refer the reader to the aforementioned paper for the precise terminology and results).

It was moreover established in \cite[Theorems 6.1, 7.1]{article:KristensenTaheri03} that if $u$ is a Lipschitz extremal with strictly positive second variation (it is a \emph{weak local minimiser}) then it is minimising among perturbations such that $\seminorm{\nabla\varphi}_{\BMO(\Omega)}$ is small, but that this is too weak to infer improved regularity though counterexamples. The former statement uses a modular version of the Fefferman-Stein inequality which we also use (see Section \ref{sec:BMO}), and the latter follows by adapting the construction of \textsc{M\"uller \& \v{S}ver\'ak} \cite{article:MullerSverak03}. For Lipschitz weak local minimisers however, it was shown by \textsc{Campos Cordero} \cite{article:CamposCordero17} that we can infer global regularity if we additionally assume that $\nabla u \in \VMO(\Omega).$ The proof loosely follows the compensated compactness argument used in \cite[Section 4]{article:KristensenTaheri03}.

We see that Corollary \ref{cor:lipschitzvmo_regularity} generalises the above result in \cite{article:CamposCordero17}, by removing the condition on the second variation and allowing $F$ to merely satisfy a strict Legendre-Hadamard condition \ref{eq:legendrehadamard}. Here the Legendre-Hadamard condition can be seen to be a natural relaxation in the following sense; it is proved by \textsc{Kristensen} \cite{article:Kristensen99} that \ref{eq:legendrehadamard} implies that $F$ is \emph{locally quasiconvex} in the sense that for each $z_0 \in \bb R^{Nn}$ there exists a quasiconvex function $G$ such that $F=G$ in a neighbourhood of $z_0.$ Our argument, which builds upon ideas of \textsc{Moser} \cite{article:Moser01}, streamlines this process by establishing regularity directly. In particular we note that the same Fefferman-Stein estimate used for the $\BMO$-sufficiency result in \cite{article:KristensenTaheri03} is crucially used to obtain a Caccioppoli-type inequality in \cite{article:Moser01} and Section \ref{sec:interior_caccioppoli}.

\subsection{Basic notation}\label{sec:notation}

We will briefly fix some notation that will be used throughout the text. We will equip $\bb R^n$ with the Lebesgue measure $\cal L^n,$ and if $A \subset \bb R^n$ is non-empty and open such that $\cal L^n(A)<\infty,$ for any $f \in L^1(A,\bb R^k)$ with $k\geq 1$ we define
\begin{equation}
  (f)_A := \dashint_A f \,\d x := \frac1{\cal L^n(A)} \int_A f \,\d x.
\end{equation}
We also denote by a $B_R(x_0)$ the open ball in $\bb R^n$ centred at $x_0$ with radius $R,$ and for $\Omega \subset \bb R^n$ open write $\Omega_R(x_0) = \Omega \cap B_R(x_0).$ We may write $B_R,$ $\Omega_R$ respectively if the centre point $x_0$ is clear from context.

We will denote by $\bb R^{Nn}$ the space of $N \times n$ real matrices, which we equip with the inner product $z : w = \tr(z^tw)$ and $\ell^2$-norm $\lvert z\rvert^2 = z : z$ for $z,w \in \bb R^{Nn}.$ For a differentiable map $F \colon \bb R^{Nn} \to \bb R$ we define its derivative $F' \colon \bb R^{Nn} \to \bb R^{Nn}$ as 
\begin{equation}
  F'(z)w = \left.\frac{\d}{\d t}\right\rvert_{t=0} F(z+tw),
\end{equation}
and for a differentiable map $A \colon \bb R^{Nn} \to \bb R^{Nn}$ its derivative $A'(z)$ will be a linear map $\bb R^{Nn} \to \bb R^{Nn}$ at each $z \in \bb R^{Nn},$ defined by
\begin{equation}
  A'(z)w = \left.\frac{\d}{\d t}\right\rvert_{t=0} A(z+tw).
\end{equation}
If $F$ is $C^2$ this allows us to define $F'',$ which satisfies $F''(z)v : w = F''(z)w : v$ for all $z, v, w \in \bb R^{Nn}.$

Additionally $C$ will denote a constant that may change from line to line, and if not specified in proofs they will depend only on the parameters the resulting estimate depends on.

\section{Interior regularity for \texorpdfstring{$F$}{F}-extremals}\label{sec:interiorregularity}

We begin by considering the interior regularity theory for solutions to the Euler-Lagrange system. While the techniques extend to the general case, we will present a detailed proof in this simplified setting first to illustrate the key ideas. We will refer to Section \ref{sec:technicalprelims} for some auxiliary results, but since we only apply them on balls $B$ they can be obtained through simpler means.

\subsection{Estimates for \texorpdfstring{$F$}{F}}\label{sec:autonomous_integrand}

We will consider $F \colon \bb R^{Nn} \to \bb R$ satisfying Hypotheses \ref{hyp:autonomous_F}, and fix $M>0.$ Since $F''(z)$ is uniformly continuous on compact subsets, there is $\Lambda_M>0$ and a modulus of continuity function $\omega_M \colon [0,\infty) \to [0,1]$ such that
\begin{align}\label{eq:autonomous_secondF_estimates}
  \lvert F''(z)\rvert &\leq \Lambda_M, \\
  \lvert F''(z)-F''(w)\rvert &\leq \Lambda_M \omega_M(\lvert z-w\rvert)
\end{align}
for all $z,w \in \bb R^{Nn}$ with $\lvert z\rvert,\lvert w\rvert \leq M+1.$ Here $\omega_M$ can be chosen to be a non-decreasing, continuous, and concave function such that $\omega_M(0)=0.$ Also since the strict Legendre-Hadamard condition holds uniformly on compact subsets, there is $\lambda_M>0$ such that for all $z \in \bb R^{Nn}$ with $\lvert z\rvert \leq M$ we have
\begin{equation}\label{eq:autonomous_quantitative_LH}
  F''(z)(\xi \tensor \eta) : (\xi \tensor \eta) \geq \lambda_M \lvert \xi\rvert^2\lvert \eta\rvert^2
\end{equation}
for all $\xi \in \bb R^N$ and $\eta \in \bb R^n.$ Now for $w \in \bb R^{Nn}$ with $\lvert w\rvert\leq M,$ following \textsc{Acerbi \& Fusco} \cite{article:AcerbiFusco87} consider the shifted integrand
\begin{equation}\label{eq:autonomous_shifted}
  F_w(z) = F(z+w) - F(w) - F'(w)z.
\end{equation}
Since $F''$ satisfies a Legendre-Hadamard condition, we infer $F$ is rank-one convex and so its derivative satisfies $\lvert F'(z)\rvert \leq C(n,N)K(1+\lvert z\rvert)^{q-1}.$ Hence $F_w$ satisfies the growth conditions
\begin{align}
  \lvert F_w(z)\rvert &\leq K_M( \lvert z\rvert^2 + \lvert z\rvert^q), \label{eq:autonomous_shifted_estimate1}\\
  \lvert F'_w(z)\rvert &\leq K_M( \lvert z\rvert + \lvert z\rvert^{q-1})\label{eq:autonomous_shifted_estimate2}
\end{align}
where
\begin{equation}\label{autonomous_KM}
  K_M = \Lambda_M + C(N,n)K,
\end{equation}
using the mean value theorem and distinguishing between the cases when $\lvert z\rvert \leq 1$ and $\lvert z\rvert>1.$ A similar argument gives the comparison estimate
\begin{equation}\label{eq:autonomous_F_pertubation}
  \lvert F''_w(0)z - F'_w(z)\rvert \leq K_M\,\omega_M(\lvert z\rvert)(\lvert z\rvert+\lvert z\rvert^{q-1}).
\end{equation}

\subsection{Caccioppoli-type inequality}\label{sec:interior_caccioppoli}

We now prove the following weakening of the \emph{Caccioppoli inequality of the second kind} introduced by \textsc{Evans} in \cite{article:Evans86}, which is a staple for many partial regularity proofs in the quasiconvex setting. The following estimate was essentially proved by \textsc{Moser} in \cite{article:Moser01}, and involves applying the modular version of the estimate of \textsc{Fefferman \& Stein} \cite{article:FeffermanStein72} established in Section \ref{sec:BMO} (see also Remark \ref{rem:interior_feffermanstein} at the end of this subsection).

\begin{lem}[Caccioppoli-type inequality]\label{lem:autonomous_caccioppoli}
  Suppose $F$ satisfies Hypotheses \ref{hyp:autonomous_F}, and let $M \geq 1.$ Then if $u$ is $F$-extremal in some ball $B_R(x_0) \subset \Omega$ such that $\nabla u \in \BMO(B_R,\bb R^{Nn})$ with $\seminorm{\nabla u}_{\BMO(B_R)} \leq 1$ and $\lvert(\nabla u)_{B_R}\rvert \leq M,$ then setting
  \begin{equation}\label{eq:u_affine_approximant}
    a_R(x) = (u)_{B_R} + \left( \frac{n+2}{R^2} \dashint_{B_R} u(y) \tensor (y-x_0) \,\d y \right)  \cdot (x-x_0),
  \end{equation}
  there is $\widetilde M = C(n)M$ and $C=C(n,N,q,K_{\widetilde M}/\lambda_{\widetilde M})>0$ such that
  \begin{equation}
    \dashint_{B_{R/2}} \lvert \nabla u - \nabla a_R\rvert^2 \,\d x \leq C\,\gamma\left(\seminorm{\nabla u}_{\BMO(B_R)}\right) \dashint_{B_R} \lvert \nabla u - \nabla a_R\rvert^2 \,\d x + \frac{C}{R^2} \dashint_{B_R} \lvert u-a_R\rvert^2 \,\d x ,
  \end{equation}
  with $\gamma(t) \colon [0,\infty) \to [0,1]$ a non-decreasing, continuous function such that $\gamma(0)=0,$ depending on $\omega_{\widetilde M}$ and $q$ only.
\end{lem}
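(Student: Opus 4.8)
The plan is to test the weak form of the Euler–Lagrange system with a suitably cut-off version of $u - a_R$, imitating Evans' proof of the Caccioppoli inequality of the second kind but tracking the shifted integrand $F_w$ with $w = (\nabla u)_{B_R}$, and then to absorb the error coming from $\nabla u$ oscillating away from $w$ by means of the modular Fefferman–Stein inequality. Concretely, fix radii $R/2 \leq s < t \leq R$ and a cut-off $\eta \in C^\infty_c(B_t)$ with $\eta \equiv 1$ on $B_s$, $0 \leq \eta \leq 1$, $\lvert\nabla\eta\rvert \lesssim (t-s)^{-1}$. Write $v = u - a_R$, so $\nabla v = \nabla u - w$ where $w := \nabla a_R$ and note that $\lvert w\rvert \leq \widetilde M = C(n)M$ by Jensen and the definition \eqref{eq:u_affine_approximant}. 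Since $a_R$ is affine, $u$ is also $F_w$-extremal (the linear term $F'(w)\cdot$ integrates against $\nabla\varphi$ to zero), i.e. $\int_{B_R} F'_w(\nabla v):\nabla\varphi\,\d x = 0$ for all $\varphi \in C^\infty_c(B_R,\bb R^N)$. Take $\varphi = \eta^2 v$ as test function.

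The second main step is the algebraic rearrangement. From $\int F'_w(\nabla v):\nabla(\eta^2 v) = 0$ one gets
\begin{equation*}
  \int_{B_t} \eta^2\, F'_w(\nabla v):\nabla v \,\d x = -\int_{B_t} F'_w(\nabla v):(2\eta\, v \tensor \nabla\eta)\,\d x.
\end{equation*}
On the left, write $F'_w(\nabla v):\nabla v = F''_w(0)\nabla v:\nabla v + \big(F'_w(\nabla v)-F''_w(0)\nabla v\big):\nabla v$; the first piece is controlled below using the Legendre–Hadamard condition \eqref{eq:autonomous_quantitative_LH} via a Gårding-type inequality (equivalently, after adding a cut-off the quadratic form $z\mapsto F''_w(0)z:z$ is coercive in $L^2$ on functions with support in a ball — this is where ellipticity enters and where one must be slightly careful, since only the rank-one form is controlled). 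The error term is bounded using \eqref{eq:autonomous_F_pertubation}: $\lvert F'_w(\nabla v)-F''_w(0)\nabla v\rvert \leq K_{\widetilde M}\,\omega_{\widetilde M}(\lvert\nabla v\rvert)(\lvert\nabla v\rvert + \lvert\nabla v\rvert^{q-1})$. The right-hand side is handled by Young's inequality, producing $\tfrac14\int\eta^2\lvert\nabla v\rvert^2 + \tfrac{C}{(t-s)^2}\int_{B_t}\lvert v\rvert^2$.

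The crux is estimating $\int_{B_t}\eta^2\,\omega_{\widetilde M}(\lvert\nabla v\rvert)(\lvert\nabla v\rvert^2 + \lvert\nabla v\rvert^q)\,\d x$. Split it at $\lvert\nabla v\rvert \leq 1$ and $\lvert\nabla v\rvert > 1$; on the small set $\omega_{\widetilde M}$ is bounded by $\omega_{\widetilde M}(1)$ but more usefully one keeps a factor like $\omega_{\widetilde M}$ evaluated at the local oscillation. The high-integrability part $\int \lvert\nabla v\rvert^q$ is where BMO smallness is exploited: by the John–Nirenberg inequality $\nabla u \in \BMO$ with small seminorm lies in every $L^q_{\loc}$ with controlled norm, and more precisely one invokes the modular Fefferman–Stein estimate from Section~\ref{sec:BMO} to bound $\int_{B_t}\lvert\nabla v\rvert^q$ (or the relevant Orlicz modular) by $\seminorm{\nabla u}_{\BMO(B_R)}$ times a maximal-function / sharp-function average that is in turn controlled by $\dashint_{B_R}\lvert\nabla v\rvert^2$. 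Collecting, one obtains
\begin{equation*}
  \int_{B_s}\lvert\nabla v\rvert^2 \,\d x \leq \tfrac12 \int_{B_t}\lvert\nabla v\rvert^2\,\d x + C\,\gamma\!\big(\seminorm{\nabla u}_{\BMO(B_R)}\big)\int_{B_R}\lvert\nabla v\rvert^2\,\d x + \tfrac{C}{(t-s)^2}\int_{B_R}\lvert v\rvert^2\,\d x,
\end{equation*}
after choosing the Young parameter appropriately and defining $\gamma$ from $\omega_{\widetilde M}$ and $q$. A standard hole-filling iteration lemma over the radii $R/2 \leq s < t \leq R$ removes the first term on the right, and dividing by $\cal L^n(B_R)$ gives the averaged form stated in the lemma, with the constant depending only on $n, N, q$ and the ellipticity ratio $K_{\widetilde M}/\lambda_{\widetilde M}$.

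The main obstacle I anticipate is making the coercivity step rigorous given that only the rank-one (Legendre–Hadamard) form is assumed positive rather than the full quadratic form: one cannot directly conclude $\int F''_w(0)\nabla\psi:\nabla\psi \gtrsim \int\lvert\nabla\psi\rvert^2$ pointwise, and instead must pass through a Gårding inequality on $W^{1,2}_0(B)$ (via Fourier/Plancherel on $\bb R^n$ after extending by zero, using that $\widehat{\nabla\psi}(\xi)$ has the rank-one structure $\widehat\psi(\xi)\tensor\xi$). A secondary technical point is ensuring the modular Fefferman–Stein bound is applied with the right exponents so that the resulting function $\gamma$ depends only on $\omega_{\widetilde M}$ and $q$, with $\gamma(t) \to 0$ as $t \to 0$; this requires that the $L^q$-type norm of $\nabla v$ on $B_R$ be controlled by a dimensional multiple of $\seminorm{\nabla u}_{\BMO(B_R)}$ plus $\big(\dashint_{B_R}\lvert\nabla v\rvert^2\big)^{1/2}$, which is precisely the content of the estimate recalled in Section~\ref{sec:BMO}.
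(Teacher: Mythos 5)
Your proposal is correct and follows essentially the same route as the paper: shift the integrand at $\nabla a_R$, test with $\eta^2(u-a_R)$, get coercivity from the Legendre--Hadamard condition via a G\aa rding-type inequality applied to $\nabla(\eta(u-a_R))$ (so the commutator terms $v\otimes\nabla\eta$ join the error terms), control the linearisation error with the comparison estimate \eqref{eq:autonomous_F_pertubation} plus the modular Fefferman--Stein bound, and use John--Nirenberg for the powers $\lvert\nabla v\rvert^{q}$ and $\lvert\nabla v\rvert^{2(q-1)}$. The only cosmetic differences are your two-radius hole-filling iteration, which the paper avoids by a single cutoff between $B_{R/2}$ and $B_R$ and direct Young absorption, and that one should record via Lemma \ref{lem:interior_affinechoice} that $\lvert\nabla a_R-(\nabla u)_{B_R}\rvert\leq C(n)\seminorm{\nabla u}_{\BMO(B_R)}$, so the Fefferman--Stein estimate, stated for $\nabla u-(\nabla u)_{B_R}$, transfers to $\nabla v=\nabla u-\nabla a_R$.
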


This choice of $a_R$ is due to \textsc{Kronz} \cite{article:Kronz02}, whose significance is illustrated in the lemma below; this is essentially contained in \cite[Lemma 2(ii)]{article:Kronz02}, applying the Poincar\'e inequality in $W^{1,1}$ instead.

\begin{lem}\label{lem:interior_affinechoice}
  If $u \in W^{1,2}(B_R,\bb R^N)$, we have $a_R$ defined as in (\ref{eq:u_affine_approximant}) satisfies
  \begin{equation}\label{eq:interior_affineminima}
    \dashint_{B_R} \lvert u - a_R\rvert^2 \,\d x \leq \dashint_{B_R} \lvert u - a\rvert^2 \,\d x
  \end{equation} 
  for any $a : \bb R^n \to \bb R^N.$ Further we have the estimate
  \begin{equation}\label{eq:interior_affinederivative}
    \lvert \nabla a_R - (\nabla u)_{B_R}\rvert \leq C(n)\,\dashint_{B_R} \lvert\nabla u - (\nabla u)_{B_R(x_0)}\rvert \,\d x.
  \end{equation} 
  In particular if $\nabla u \in \BMO(B_R,\bb R^{Nn})$ we have $\lvert \nabla a_R - (\nabla u)_{B_R}\rvert \leq C(n) \seminorm{\nabla u}_{\BMO(B_R)}.$
\end{lem}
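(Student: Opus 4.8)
The statement has two parts: a minimality property \eqref{eq:interior_affineminima} and a gradient estimate \eqref{eq:interior_affinederivative}. I should prove the minimality part by recognizing $a_R$ as the $L^2(B_R)$-orthogonal projection of $u$ onto the finite-dimensional space of affine maps, and the gradient estimate by a direct computation combined with a Poincaré-type inequality.

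Let me work this out.

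For the minimality: affine maps $a(x) = c + L(x - x_0)$ with $c \in \mathbb{R}^N$, $L \in \mathbb{R}^{Nn}$ form a linear subspace $\mathcal{A}$ of $L^2(B_R, \mathbb{R}^N)$. The minimizer of $\int_{B_R} |u - a|^2$ over $a \in \mathcal{A}$ is the orthogonal projection. The key point: constants and the coordinate functions $(x - x_0)_j$ are $L^2(B_R)$-orthogonal (since $B_R$ is a ball centered at $x_0$, $\int_{B_R} (x-x_0)_j\, dx = 0$, and $\int_{B_R}(x-x_0)_i (x-x_0)_j\, dx = \delta_{ij} \cdot \frac{1}{n}\int_{B_R}|x-x_0|^2\, dx$). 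So the projection decouples: the constant part is $(u)_{B_R}$, and the linear part has coefficients $L$ with $L e_j = \frac{\int_{B_R} u(y)(y-x_0)_j\, dy}{\int_{B_R}(y-x_0)_j^2\, dy}$. Computing $\int_{B_R}(y-x_0)_j^2\, dy = \frac{R^2}{n+2}\mathcal{L}^n(B_R)$ gives exactly the formula \eqref{eq:u_affine_approximant}. This is completely elementary; the main thing to state carefully is the value of $\int_{B_R}(y-x_0)_j^2\, dy$.

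The gradient estimate: note $\nabla a_R = L$ from above, i.e. $\nabla a_R = \frac{n+2}{R^2}\dashint_{B_R} u(y)\otimes(y-x_0)\, dy$. Since $\dashint_{B_R}(y-x_0)\, dy = 0$, we can subtract any constant from $u$ inside the integral, in particular $\nabla a_R - (\nabla u)_{B_R} = \frac{n+2}{R^2}\dashint_{B_R}\big(u(y) - (u)_{B_R} - (\nabla u)_{B_R}(y-x_0)\big)\otimes(y-x_0)\, dy$ — wait, more directly: write $\nabla a_R - (\nabla u)_{B_R} = \frac{n+2}{R^2}\dashint_{B_R}(u(y) - \ell(y))\otimes(y-x_0)\, dy$ where $\ell(y) = (u)_{B_R} + (\nabla u)_{B_R}(y - x_0)$, using that $\frac{n+2}{R^2}\dashint_{B_R}\ell(y)\otimes(y-x_0)\, dy = (\nabla u)_{B_R}$ by the same moment computation. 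Then bound $|y - x_0| \le R$ and apply the Poincaré inequality in $W^{1,1}$: $\dashint_{B_R}|u - (u)_{B_R} - (\nabla u)_{B_R}(\cdot - x_0)|\, dy \le C(n) R \dashint_{B_R}|\nabla u - (\nabla u)_{B_R}|\, dy$. This yields \eqref{eq:interior_affinederivative} with the factor $R$ cancelling. The final $\BMO$ bound is then immediate from the definition of $[\nabla u]_{\BMO(B_R)}$.

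I don't anticipate a genuine obstacle here — the lemma is a standard computation. The only mild care needed is (i) getting the constant $\frac{R^2}{n+2}\mathcal L^n(B_R)$ for the second moment of a coordinate over a ball right, and (ii) invoking the $W^{1,1}$ Poincaré inequality rather than $W^{1,2}$ so that the estimate is in terms of $L^1$ averages of $\nabla u - (\nabla u)_{B_R}$, which is what connects cleanly to the $\BMO$ seminorm; this is exactly the modification over \cite[Lemma 2(ii)]{article:Kronz02} flagged in the text. One should note $u \in W^{1,2}(B_R)$ suffices for all integrals to make sense and for Poincaré to apply.
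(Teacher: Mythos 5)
Your proposal is correct and follows essentially the route the paper itself indicates: the paper gives no separate proof but cites the projection/minimality property from Kronz's Lemma 2(ii) together with the $W^{1,1}$ Poincar\'e inequality, which is exactly your orthogonal-projection computation (with the second moment $\dashint_{B_R}(y-x_0)_j^2\,\d x = R^2/(n+2)$) plus the $W^{1,1}$ Poincar\'e bound applied to $u-(u)_{B_R}-(\nabla u)_{B_R}(\cdot-x_0)$.
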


\begin{proof}[Proof of Lemma {\ref{lem:autonomous_caccioppoli}}]
  Set $\widetilde F(z) = F_{\nabla a_R}(z)$ as in (\ref{eq:autonomous_shifted}), and note by Lemma \ref{lem:interior_affinechoice} that
  \begin{equation}\label{eq:widetildeM}
    \lvert \nabla a_R\rvert \leq \lvert(\nabla u)_{B_R}\rvert + C(n)\seminorm{\nabla u}_{\BMO(B_R)} \leq \widetilde M.
  \end{equation} 
  Also fix a cutoff $\eta \in C^{\infty}_c(B_R)$ such that $1_{B_{R/2}} \leq \eta \leq 1_{B_R}$ and $\lvert \nabla \eta\rvert \leq \frac{C}{R}.$  Putting $w = u-a_R$ we have $w$ is $\widetilde F$-extremal since $u$ is $F$-extremal, and so testing the equation against $\phi = \eta^2w$ gives
  \begin{equation}\label{eq:autonomous_shifted_EL}
    0 = \dashint_{B_R} \widetilde F'(\nabla w) :\nabla(\eta^2w) \,\d x.
  \end{equation}
  Also since $\widetilde F''(0) = F''(\nabla a)$ satisfies the strict Legendre-Hadamard condition (\ref{eq:autonomous_quantitative_LH}) with $\lvert \nabla a\rvert\leq {\widetilde M},$ applying this to $\eta w \in W^{1,2}_0(\Omega,\bb R^N)$ gives (see for instance \cite[Theorem 10.1]{book:Giusti03}),
  \begin{equation}\label{eq:autonomous_interior_coercivity}
    \lambda_{\widetilde M} \dashint_{B_R} \lvert \nabla(\eta w)\rvert^2 \,\d x \leq \dashint_{B_R} \widetilde F''(0)\nabla(\eta w) : \nabla(\eta w) \,\d x.
  \end{equation}
  Taking the difference of (\ref{eq:autonomous_shifted_EL}), (\ref{eq:autonomous_interior_coercivity}) and rearranging we get
  \begin{equation}
    \begin{split}
      &\lambda_{\widetilde M} \dashint_{B_R} \lvert \nabla(\eta w)\rvert^2 \,\d x \\
      &\qquad\leq \dashint_{B_R} \eta^2 \left( \widetilde F''(0)\nabla w - \widetilde F'(\nabla w)\right): \nabla w \,\d x\\
      &\qquad\quad+ \dashint_{B_R} 2 \widetilde F''(0)(w\nabla \eta) :  (2\nabla (\eta w) - w\nabla \eta)\,\d x  - \dashint_{B_R} \widetilde F'(\nabla w):(2 \eta w \nabla \eta) \,\d x\\
      &\qquad\leq K_{\widetilde M} \dashint_{B_R} \omega_{\widetilde M}(\lvert \nabla w\rvert) \left(\lvert \nabla w\rvert^2 + \lvert \nabla w\rvert^q \right) \,\d x \\
      &\qquad\quad + 4K_{\widetilde M}\dashint_{B_R} \lvert w \nabla \eta\rvert \left( \lvert w \nabla \eta\rvert + \lvert \nabla(\eta w)\rvert + \eta \lvert \nabla w\rvert^{q-1} \right) \,\d x,
    \end{split}
  \end{equation}
  where we have used the comparison estimate (\ref{eq:autonomous_F_pertubation}) to control the first term along with the fact that $\eta^2 \leq 1,$ and the growth estimates (\ref{eq:autonomous_secondF_estimates}), (\ref{eq:autonomous_shifted_estimate2}) for the additional terms. We apply the modular Fefferman-Stein estimate (Corollary \ref{cor:BMO_modulus}) to the first term, noting that $\nabla w = \nabla u - (\nabla u)_{B_R}$ so
  \begin{equation}
    \begin{split}
      &\dashint_{B_R} \omega_{\widetilde M}(\lvert \nabla w\rvert) \left(\lvert \nabla w\rvert^2+\lvert \nabla w\rvert^q\right) \,\d x \\
      &\quad \leq C\, \omega_{\widetilde M}\left(\seminorm{\nabla u}_{\BMO(B_R)} + \lvert\nabla a_R - (\nabla u)_{B_R}\rvert\right) \dashint_{B_R} \lvert \nabla w\rvert^2 + \lvert \nabla w\rvert^q \,\d x \\
      &\quad \leq C\, \omega_{\widetilde M}\left(\seminorm{\nabla u}_{\BMO(B_R)} \right) \dashint_{B_R} \lvert \nabla w\rvert^2 + \lvert \nabla w\rvert^q \,\d x,
    \end{split}
  \end{equation}
  where we have used Lemma \ref{lem:interior_affinechoice} along with the fact that $\omega_{\widetilde M}(ts) \leq t\, \omega_{\widetilde M}(s)$ for $t \geq 1,$ $s \geq 0$ in the last line. Hence combining these with the earlier estimate and using Young's inequality to absorb the $\lvert \nabla(\eta w)\rvert^2$ term we arrive at
  \begin{equation}
    \begin{split}
      \dashint_{B_{R/2}} \lvert \nabla w\rvert^2 \,\d x &\leq  C \, \omega_{\widetilde M}\left(\seminorm{\nabla u}_{\BMO(B_R)}\right) \dashint_{B_R} \lvert \nabla w\rvert^2 + \lvert \nabla w\rvert^q \,\d x \\
      &\quad+\frac C{R^2} \dashint_{B_R} \lvert w\rvert^2 \,\d x + C \dashint_{B_R}\lvert \nabla w\rvert^{2(q-1)} \,\d x.
    \end{split}
  \end{equation}
  Note that if $q=2,$ we do not get the $\lvert \nabla w\rvert^{2(q-1)}$ term. Otherwise by the John-Nirenberg inequality (Proposition \ref{prop:global_JohnNirenberg}) and Lemma \ref{lem:interior_affinechoice} we can bound
  \begin{align}
    \dashint_{B_R} \lvert \nabla w\rvert^{q} \,\d x &\leq C \seminorm{\nabla u}_{\BMO(B_R)}^{q-2} \dashint_{B_R} \lvert \nabla w\rvert^2 \,\d x, \\
    \dashint_{B_R} \lvert \nabla w\rvert^{2(q-1)} \,\d x &\leq C\seminorm{\nabla u}_{\BMO(B_R)}^{2(q-2)} \dashint_{B_R} \lvert \nabla w\rvert^2 \,\d x.
  \end{align}
  Therefore if we let $\gamma(t) = \min\{1,\left( \omega_{\widetilde M}(t)(1+t^{q-2}) + t^{2(q-2)}\right)\}$ (omitting the $t^{q-2}$ terms if $q=2$) we deduce that
  \begin{equation}
    \dashint_{B_{R/2}} \lvert \nabla w\rvert^2 \,\d x \leq C\,\gamma\left(\seminorm{\nabla u}_{\BMO(B_R)}\right) \dashint_{B_R} \lvert \nabla w\rvert^2 \,\d x + \frac{C}{R^2} \dashint_{B_R} \lvert w\rvert^2 \,\d x,
  \end{equation}
  as required.
\end{proof}

\begin{rem}\label{rem:interior_feffermanstein}
  We have referred to Section \ref{sec:BMO} for the John-Nirenberg and modular Fefferman-Stein estimates, however in the interior case they can also be deduced from the corresponding statements in the full space using a cutoff argument. We will omit the details, but the argument is similar to that found in \cite{article:Moser01}; in this case the modular estimate can be proved more simply via a good-$\lambda$ estimate (see \cite[Lemma 6.2]{article:KristensenTaheri03}).
\end{rem}

\subsection{Harmonic approximation and interior regularity}

Our second ingredient is a comparison estimate for solutions to the linearised system. The following duality argument is an adaptation of the estimate proved in \cite{article:GmeinederKristensen19}. The linear theory we need will straightforwardly follow from the strict Legendre-Hadamard condition satisfied by $F''(\nabla a),$ and we will refer the reader to \cite[Chapter 10]{book:Giusti03} for details.

\begin{lem}[Interior harmonic approximation]\label{lem:autonomous_harmonic_approximation}
  Suppose $F$ satisfies Hypotheses \ref{hyp:autonomous_F}, let $M >0,$ and suppose $u$ is $F$-extremal in some ball $B_R(x_0) \subset \Omega$ such that $\nabla u \in \BMO(B_R,\bb R^{Nn})$ with $\seminorm{\nabla u}_{\BMO(B_R)} \leq 1$ and $\lvert(\nabla u)_{B_R}\rvert \leq M.$ Then letting $\nabla a_R$ as in (\ref{eq:u_affine_approximant}), we have the unique solution $h \in W^{1,2}(B_R,\bb R^N)$ to the problem
  \begin{equation}
    \pdeproblem{-\div F''(\nabla a_R)\nabla h}{0}{B_R,}{h}{u-a_R}{\partial B_R,}
  \end{equation}
  satisfies the $L^2$ estimate
  \begin{equation}\label{eq:autonomous_comparision_standardl2}
    \dashint_{B_R} \lvert \nabla h\rvert^2 \,\d x \leq C\dashint_{B_R} \lvert \nabla u - \nabla a_R\rvert^2 \,\d x
  \end{equation}
  with $\widetilde M = C(n)M$ and $C=C(n,N,K_{\widetilde M}/\lambda_{\widetilde M})>0,$ and further the comparison estimate
  \begin{equation}
    \frac1{R^2} \dashint_{B_R} \lvert u-a_R-h\rvert^2 \,\d x \leq C\,\gamma\left(\seminorm{\nabla u}_{\BMO(B_{R})}\right)\dashint_{B_R} \lvert \nabla u - \nabla a_R\rvert^2 \,\d x,
  \end{equation}
  with $C=C(n,N,q,K_{\widetilde M}/\lambda_{\widetilde M})$ and some $\gamma \colon [0,\infty) \to [0,1]$ increasing and continuous such that $\gamma(0)=0,$ depending on $\omega_{\widetilde M}$ and $q$ only.
\end{lem}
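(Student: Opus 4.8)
The plan is to prove the two asserted estimates separately. For the energy estimate (\ref{eq:autonomous_comparision_standardl2}) I would first record the (constant-coefficient) linear theory. By Lemma \ref{lem:interior_affinechoice} we have $\lvert\nabla a_R\rvert\leq\widetilde M$, so the constant tensor $A:=F''(\nabla a_R)$ satisfies the quantitative Legendre--Hadamard condition (\ref{eq:autonomous_quantitative_LH}) with constant $\lambda_{\widetilde M}$ and the bound $\lvert A\rvert\leq\Lambda_{\widetilde M}\leq K_{\widetilde M}$; extending by zero and passing to the Fourier transform (equivalently, as in \cite[Theorem 10.1]{book:Giusti03}) gives $\int_{B_R}A\nabla\phi:\nabla\phi\,\d x\geq\lambda_{\widetilde M}\int_{B_R}\lvert\nabla\phi\rvert^2\,\d x$ for all $\phi\in W^{1,2}_0(B_R,\bb R^N)$. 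Lax--Milgram applied to $h-(u-a_R)\in W^{1,2}_0$ then yields existence and uniqueness of $h$, and testing the equation for $h$ against $h-(u-a_R)$ and using coercivity gives $\norm{\nabla h-\nabla(u-a_R)}_{L^2(B_R)}\leq(\Lambda_{\widetilde M}/\lambda_{\widetilde M})\norm{\nabla(u-a_R)}_{L^2(B_R)}$, which is (\ref{eq:autonomous_comparision_standardl2}).

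For the comparison estimate I would use a duality argument adapted from \cite{article:GmeinederKristensen19}. First I would set up the equation for the error. Write $v=u-a_R$, $z=v-h\in W^{1,2}_0(B_R,\bb R^N)$, and $\widetilde F=F_{\nabla a_R}$ as in (\ref{eq:autonomous_shifted}), so that $\widetilde F''(0)=A$ and $v$ is $\widetilde F$-extremal. By the John--Nirenberg inequality (Proposition \ref{prop:global_JohnNirenberg}) we have $\nabla v=\nabla u-\nabla a_R\in L^p(B_R)$ for every $p<\infty$; hence $\widetilde F'(\nabla v)\in L^2(B_R)$ by the growth bound (\ref{eq:autonomous_shifted_estimate2}), and the relation $\int_{B_R}\widetilde F'(\nabla v):\nabla\phi\,\d x=0$ extends by density to all $\phi\in W^{1,2}_0(B_R,\bb R^N)$. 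Subtracting the homogeneous equation for $h$, it follows that $z$ weakly solves $-\div(A\nabla z)=-\div E$ in $B_R$, where, by the comparison estimate (\ref{eq:autonomous_F_pertubation}),
\[
  E:=\widetilde F''(0)\nabla v-\widetilde F'(\nabla v),\qquad \lvert E\rvert\leq K_{\widetilde M}\,\omega_{\widetilde M}(\lvert\nabla v\rvert)\big(\lvert\nabla v\rvert+\lvert\nabla v\rvert^{q-1}\big)\quad\text{a.e. in }B_R.
\]

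Next the duality step. Given $\psi\in L^2(B_R,\bb R^N)$, let $\Psi\in W^{1,2}_0(B_R,\bb R^N)$ solve $-\div(A\nabla\Psi)=\psi$ in $B_R$; since $F''(\nabla a_R)v:w=F''(\nabla a_R)w:v$ the operator is self-adjoint, so $\Psi$ exists and is unique by the same linear theory, and testing against $\Psi$ together with the Poincar\'e inequality gives $\norm{\nabla\Psi}_{L^2(B_R)}\leq CR\norm{\psi}_{L^2(B_R)}$. Using $z\in W^{1,2}_0$ as test function for the equation of $\Psi$, then the symmetry of $A$, then $\Psi\in W^{1,2}_0$ as test function for the equation of $z$, we obtain
\[
  \int_{B_R}z\cdot\psi\,\d x=\int_{B_R}A\nabla z:\nabla\Psi\,\d x=\int_{B_R}E:\nabla\Psi\,\d x\leq\norm{E}_{L^2(B_R)}\norm{\nabla\Psi}_{L^2(B_R)}.
\]
Taking the supremum over $\norm{\psi}_{L^2(B_R)}\leq1$ gives $\norm{z}_{L^2(B_R)}\leq CR\norm{E}_{L^2(B_R)}$, hence $\tfrac1{R^2}\dashint_{B_R}\lvert z\rvert^2\,\d x\leq C\dashint_{B_R}\lvert E\rvert^2\,\d x$. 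I stress that only the $W^{1,2}$ energy estimate for the adjoint problem enters here, so no $W^{2,2}$-regularity is required.

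It then remains to bound $\dashint_{B_R}\lvert E\rvert^2\,\d x$. Since $\omega_{\widetilde M}\leq1$ we have $\lvert E\rvert^2\leq CK_{\widetilde M}^2\,\omega_{\widetilde M}(\lvert\nabla v\rvert)\big(\lvert\nabla v\rvert^2+\lvert\nabla v\rvert^{2(q-1)}\big)$, and the right-hand side is controlled exactly as in the proof of Lemma \ref{lem:autonomous_caccioppoli}: the modular Fefferman--Stein estimate (Corollary \ref{cor:BMO_modulus}), together with $\lvert\nabla a_R-(\nabla u)_{B_R}\rvert\leq C(n)\seminorm{\nabla u}_{\BMO(B_R)}$ from Lemma \ref{lem:interior_affinechoice}, pulls the modulus outside with argument proportional to $\seminorm{\nabla u}_{\BMO(B_R)}$, while the John--Nirenberg inequality bounds $\dashint_{B_R}\lvert\nabla v\rvert^{2(q-1)}\,\d x\leq C\seminorm{\nabla u}_{\BMO(B_R)}^{2(q-2)}\dashint_{B_R}\lvert\nabla v\rvert^2\,\d x$ (these higher-order terms being absent when $q=2$). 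Recalling $\nabla v=\nabla u-\nabla a_R$ and $z=u-a_R-h$, collecting terms yields the claim with $\gamma$ of the same shape as in Lemma \ref{lem:autonomous_caccioppoli}, depending only on $\omega_{\widetilde M}$ and $q$. I do not expect a genuine obstacle: the only new ingredient beyond the Caccioppoli inequality is the duality identity above, and the care lies in justifying the admissible test functions (via John--Nirenberg, so that $\widetilde F'(\nabla v)\in L^2$) and in the bookkeeping producing $\gamma$.
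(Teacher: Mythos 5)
Your argument is correct, and for the comparison estimate it takes a genuinely different route from the paper. The paper also sets up a duality, but with right-hand side $w-h$ directly: it invokes the $W^{2,2}$ theory for the constant-coefficient dual problem (via \cite[Theorem 10.3]{book:Giusti03}) and the Poincar\'e--Sobolev inequality to place $\nabla\phi$ in $L^{2^*}$, then splits the residual term by H\"older with exponents $(n,2,2^*)$ and uses Jensen's inequality for the concave $\omega_{\widetilde M}$, which produces $\gamma(t)=\min\{1,\omega_{\widetilde M}(t)^{2/n}(1+t^{2(q-2)})\}$ and forces a separate treatment of $n=2$. You instead bound the residual $E=\widetilde F''(0)\nabla v-\widetilde F'(\nabla v)$ in $L^2$ outright and only use the $W^{1,2}$ energy estimate for the adjoint problem (in fact your duality step is even more than needed: testing the equation for $z=v-h$ with $z$ itself and applying Poincar\'e already gives $\norm{z}_{L^2}\leq CR\norm{E}_{L^2}$); the smallness factor is then recovered by re-using the modular Fefferman--Stein estimate (Corollary \ref{cor:BMO_modulus}) together with Lemma \ref{lem:interior_affinechoice} and John--Nirenberg, exactly as in the Caccioppoli lemma. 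This yields $\gamma(t)\sim\min\{1,\omega_{\widetilde M}(t)(1+t^{2(q-2)})\}$, which satisfies the statement (and, since $\omega_{\widetilde M}\leq 1$, is pointwise no larger than the paper's $\omega^{2/n}$ version), while avoiding the $W^{2,2}$ estimates and the $n=2$ case distinction; your explicit density argument via John--Nirenberg, ensuring $\widetilde F'(\nabla v)\in L^2$ so that $W^{1,2}_0$ test functions are admissible, is a point the paper leaves implicit. What the paper's route buys is that it runs parallel to the $\BMO$-free argument of \cite{article:GmeinederKristensen19} and to the boundary version (Lemma \ref{lem:autonomous_harmonic_boundary}), where the same $L^{2^*}$ duality is used on the modified domain; your route is more elementary but leans on the modular Fefferman--Stein inequality a second time, which in this paper is available anyway.
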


\begin{proof}
  By replacing $F$ with $\lambda_{\widetilde M}^{-1}F,$ we can replace $(K_{\widetilde M},\lambda_{\widetilde M})$ with $(K_{\widetilde M}/\lambda_{\widetilde M},1),$ where $\widetilde M \geq 1$ as in \eqref{eq:widetildeM}.
  Put $w = u-a_R$ and $\widetilde F = F_{\nabla a}.$ Then the existence of a unique $h \in W^{1,2}_w(B_R,\bb R^N)$ follows from $L^2$-coercivity of $\widetilde F''(0)$ (see \cite[Theorem 10.1]{book:Giusti03}) which gives (\ref{eq:autonomous_comparision_standardl2}). Then for any $\phi \in W^{1,2}_0(B_R,\bb R^N)$ we have
  \begin{equation}\label{eq:autonomous_F_harmonic_firstsep}
    \begin{split}
      \dashint_{B_R} \widetilde F''(0)(\nabla w - \nabla h) :  \nabla \phi  \,\d x &= \dashint_{B_R} \left(\widetilde F''(0)\nabla w - \widetilde F'(\nabla w)\right) : \nabla \phi \,\d x \\
      &\leq K_{\widetilde M}\dashint_{B_R} \omega_{\widetilde M}(\lvert \nabla w\rvert) \left(\lvert \nabla w\rvert + \lvert \nabla w\rvert^{q-1}\right) \lvert \nabla \phi\rvert \,\d x,
    \end{split}
  \end{equation}
  where we have used the fact that $w$ is $\widetilde F$-extremal the comparison estimate (\ref{eq:autonomous_F_pertubation}). Now choose $\phi$ to be the unique solution in $W^{1,2}_0 \cap W^{2,2}(B_R,\bb R^N)$ to the problem
  \begin{equation}
    - \div \widetilde F''(0)\nabla\phi = w-h
  \end{equation}
  in $B_R$ (see \cite[Theorem 10.3]{book:Giusti03}), so in particular by symmetry of $\widetilde F''(0)$ this satisfies
  \begin{equation}
    \dashint_{B_R} \widetilde F''(0)(\nabla w - \nabla h) : \nabla \phi \,\d x = \dashint_{B_R} \lvert w-h\rvert^2 \,\d x.
  \end{equation}
  Moreover $\phi$ satisfies a $W^{2,2}$ estimate which combined with the Poincar\'e-Sobolev inequality (noting $(\nabla \phi)_{B_R} = 0$) gives
  \begin{equation}\label{eq:autonomous_phi_comparision}
    \norm{\nabla \phi}_{L^{2^*}(B_R)} \leq C(n)\norm{\nabla^2 \phi}_{L^{2}(B_R)} \leq C\norm{w-h}_{L^{2}(B_R)}
  \end{equation}
  where $2^* = \frac{2n}{n-2}$ provided $n > 2.$ For this choice of $\phi,$ applying H\"older's inequality and rearranging (\ref{eq:autonomous_F_harmonic_firstsep}) using (\ref{eq:autonomous_phi_comparision}) we get
  \begin{equation}
    \frac1{R^2} \dashint_{B_R} \lvert w-h\rvert^2 \,\d x \leq \frac{C}{R^2} \norm{\omega_{\widetilde M}(\lvert \nabla w\rvert)}_{L^n(B_R)}^2 \dashint_{B_R} \lvert \nabla w\rvert^2 + \lvert \nabla w\rvert^{2(q-1)} \,\d x.
  \end{equation}
  If $n=2$ we use the fact that $\norm{\nabla\phi}_{L^4(B_R)} \leq C R^{\frac12} \norm{w-h}_{L^2(B_R)}$ to get the slightly modified estimate
  \begin{equation}
    \frac1{R^2} \dashint_{B_R} \lvert w-h\rvert^2 \,d x = \frac CR \norm{\omega_{\widetilde M}(\lvert \nabla w\rvert)}_{L^4(B_R)}^2 \dashint_{B_R} \lvert \nabla w\rvert^2 + \lvert \nabla w\rvert^{2(q-1)} \,\d x.
  \end{equation}
  In both cases since $\omega_{\widetilde M} \leq 1$ is concave by Jensen's inequality we have
  \begin{equation}
    \norm{\omega_{\widetilde M}(\lvert \nabla w\rvert)}_{L^p(B_R)} \leq R^{\frac np}\left( \dashint_{B_R} \omega_{\widetilde M}(\lvert \nabla w\rvert) \,d x\right)^{\frac1p}\leq R^{\frac np} \omega_{\widetilde M}\left(\seminorm{\nabla w}_{\BMO(B_R)}\right)^{\frac1p},
  \end{equation}
  and by the John-Nirenberg estimate (Proposition \ref{prop:global_JohnNirenberg}) and Lemma \ref{lem:interior_affinechoice} we can also estimate
  \begin{equation}
    \dashint_{B_R} \lvert \nabla w\rvert^{2(q-1)} \,\d x \leq C \seminorm{\nabla u}_{\BMO(B_{R})}^{2(q-2)} \dashint_{B_R} \lvert \nabla w\rvert^ 2\,\d x.
  \end{equation}
  Putting everything together the result follows by taking $\gamma(t) = \min\{1,\omega_{\widetilde M}(t)^{\frac2n} \left(1+t^{2(q-2)}\right)\},$ modified suitably if $n=2.$
\end{proof}

From here Theorem \ref{thm:autonomous_regularity} follows by combining the above estimate to get a suitable decay estimate, which can be applied iteratively. This approach is standard among many partial regularity proofs, and we follow a similar argument to that found in \cite{article:GmeinederKristensen19}.

\begin{proof}[Proof of Theorem {\ref{thm:autonomous_regularity}}]
  We will begin by establishing the following decay estimate for the excess energy
  \begin{equation}
    E(x,r) = \dashint_{B_r(x)} \lvert \nabla u - (\nabla u)_{B_R}\rvert^2 \,\d y.
  \end{equation}
  
  \textbf{Claim}: For any $B_{r}(x) \subset B_{R}(x_0)$ and $\sigma \in \left(0,\frac14\right)$ for which $\lvert (\nabla u)_{B_{2\sigma r}(x)}\rvert,\lvert (\nabla u)_{B_r(x)}\rvert\leq 2^{n+1}M$ and $\seminorm{\nabla u}_{\BMO_R(x)} \leq 1$ we have
  \begin{equation}\label{eq:autonomous_excessdecay}
    E(x,\sigma r) \leq C \left( \sigma^2 + \sigma^{-(n+2)}\gamma\left(\seminorm{\nabla u}_{\BMO(B_r(x))}\right)\right) E(x,r),
  \end{equation}
  where $C=C(n,N,q,K_M/\lambda_M)>0$ and $\gamma$ satisfies both Lemmas \ref{lem:autonomous_caccioppoli} and \ref{lem:autonomous_harmonic_approximation} with $C_*(n) M$ in place of $M.$

  Indeed let $a_r$ be as in (\ref{eq:u_affine_approximant}) centred at $x,$ and apply the harmonic approximation result (Lemma \ref{lem:autonomous_harmonic_approximation}) in $B_r(x)$ to get $h \in W^{1,2}_{u-a_r}(B_r(x),\bb R^N)$ solving
  \begin{equation}
    -\div F''(\nabla a_r)\nabla h = 0,
  \end{equation}
  which satisfies
  \begin{equation}\label{eq:autonomous_comparision_rest}
    \frac1{r^2}\dashint_{B_r(x)} \lvert u-a_r-h\rvert^2 \,\d y \leq \gamma\left(\seminorm{\nabla u}_{\BMO(B_R(x_0))}\right) E(x,r).
  \end{equation}
  Now letting $a_h(y) = h(x) + \nabla h(x) \cdot (y-x),$ since $B_{2\sigma r}(x) \subset B_{r/2}(x)$ we have
  \begin{equation}\label{eq:autonomus_comparision_harmonic}
    \begin{split}
      \frac1{(2\sigma r)^2}\dashint_{B_{2\sigma r}(x)} \lvert h - a_h\rvert^2 \,\d y &\leq \frac{C}{(2\sigma r)^2} \left(\sup_{B_{r/2}(x)} \lvert \nabla^2 h\rvert\right)^2\dashint_{B_{2\sigma r}(x)} \lvert y-x\rvert^4  \,\d y \\
      &\leq C\sigma^2 \dashint_{B_r(x)} \lvert \nabla h\rvert^2 \,\d y \leq C \sigma^2 E(x,r)
    \end{split}
  \end{equation}
  using interior regularity for $h$ (see for instance \cite[Theorem 10.7]{book:Giusti03}). We will use these in conjunction with the Caccioppoli-type inequality (Lemma \ref{lem:autonomous_caccioppoli}) applied in $B_{2\sigma r}(x),$ letting  $a_{2\sigma r}$ be given by (\ref{eq:u_affine_approximant}) we have
  \begin{equation}
    \begin{split}
      E(x,\sigma r) &\leq \dashint_{B_{\sigma r}(x)} \lvert \nabla u - \nabla a_{2\sigma r}\rvert^2 \,\d y \\
      &\leq \frac{C}{(2\sigma r)^2}\dashint_{B_{2\sigma r}(x)} \lvert u-a_{2\sigma r}\rvert^2 \,\d y + C \gamma\left(\seminorm{\nabla u}_{\BMO(B_r(x))}\right) E(x,2\sigma r).
    \end{split}
  \end{equation}
  Now using the estimates (\ref{eq:autonomous_comparision_rest}) and (\ref{eq:autonomus_comparision_harmonic}) and the minimising property (\ref{eq:interior_affineminima}) we can estimate
  \begin{equation}
    \begin{split}
      &\frac1{(2\sigma r)^2} \dashint_{B_{2\sigma r}(x)} \lvert u-a_{2\sigma r}\rvert^2 \,\d y \\
      &\quad\leq \frac1{(2\sigma r)^2} \dashint_{B_{2\sigma r}(x)} \lvert u-a_0 -a_1\rvert^2 \,\d y \\
      &\qquad\leq\frac C{\sigma^{n+2}r^2} \dashint_{B_{r}(x)} \lvert u-a_r-h\rvert^2 \,\d y + \frac C{(2\sigma r)^2} \dashint_{B_{2\sigma r}(x)} \lvert h - a_h\rvert^2 \,\d y \\
      &\quad\leq C\left(\sigma^2 + \sigma^{-(n+2)}\gamma\left( \seminorm{\nabla u}_{\BMO(B_R(x_0))}\right) \right) E(x,r).
    \end{split}
  \end{equation}
  So the claim follows by combining the above two estimates.

  We now iteratively apply the claim for suitably chosen parameters. Since $\lvert (\nabla u)\rvert_{B_R(x_0)} \leq M,$ for all $x \in B_{R/2}(x_0)$ we have $\lvert (\nabla u)_{B_{R/2}(x)}\rvert \leq M$ and so
  \begin{equation}
    \lvert (\nabla u)_{B_{\sigma R/2}(x)}\rvert \leq \lvert (\nabla u)_{B_{R/2}(x)}\rvert + \lvert (\nabla u)_{B_{\sigma R/2}(x)} - (\nabla u)_{B_{R/2}(x)}\rvert \leq 2^nM + \sigma^{-n} E(x,R/2).
  \end{equation}
  Iteratively applying this therefore gives
  \begin{equation}\label{eq:autonomous_averages_control}
    \lvert (\nabla u)_{B_{\sigma^k R/2}(x)}\rvert \leq 2^nM + \sigma^{-n}\sum_{j=0}^{k-1}  E(x,\sigma^jR/2).
  \end{equation}
  Since $E(x,r) \leq \seminorm{\nabla u}_{\BMO(B_R(x_0))}^2 \leq \eps^2$ for all $r < R/2,$ see that if $\sigma^{-n}\eps^2 \leq 2^nM,$ we can apply the claimed decay estimate (\ref{eq:autonomous_excessdecay}) to obtain 
  \begin{equation}
    E(x,\sigma R) \leq C \left( \sigma^2 + \sigma^{-n-2} \gamma(\eps)\right) E(x,R/2).
  \end{equation}
  Fix $\alpha \in (0,1),$ and choose $\sigma \leq \frac14$ such that $C\sigma^2 \leq \frac12 \sigma^{2\alpha}.$ Then we can take $\eps>0$ small enough so $C \sigma^{-(n+2)} \gamma(\eps) \leq \frac12 \sigma^{2\alpha}$ and $\sigma^{-n}\eps^2 \sum_j \sigma^{\alpha j} \leq 2^nM.$ Then we can inductively check that (\ref{eq:autonomous_excessdecay}) gives
  \begin{equation}
    E(x,\sigma^k R/2) \leq \sigma^{2\alpha k} E(x,R/2),
  \end{equation}
  and by (\ref{eq:autonomous_averages_control}) we can ensure
  \begin{equation}
    \lvert (\nabla u)_{B_{\sigma^k R/2}(x)}\rvert \leq 2^{n+1}M
  \end{equation}
  for each $k\geq 1.$ Hence for each $r \in (0,R/2),$ choosing $k$ such that $\sigma^k R/2 \leq r < \sigma^{k-1}R/2$ we deduce that
  \begin{equation}
    E(x,r) \leq C \left(\frac{r}{R}\right)^{2\alpha} E(x_0,R).
  \end{equation}
  This verifies the Campanato-Meyers characterisation of H\"older continuity (see for instance \cite[Theorem 2.9]{book:Giusti03}), allowing us to conclude that $u \in C^{1,\alpha}(\overline{B_{R/2}(x_0)},\bb R^N)$ as required.
\end{proof}

\section{Preliminaries for boundary regularity}\label{sec:technicalprelims}

Before we consider the boundary case, we will collect some technical results which will be used in our subsequent regularity proofs. While these results are largely known, some care was needed in keeping track of the associated constants.

\subsection{\texorpdfstring{$\BMO$}{BMO} in domains}\label{sec:BMO}

We will review some preliminary results about $\BMO$ functions and fix our conventions. For any $D \subset \bb R^n$ open, we define the \emph{Fefferman-Stein maximal function} associated to $f \in L^1_{\loc}(D,\bb R^{Nn})$ as
\begin{equation}
  \cal M_{D}^{\#}f(x) = \sup_{x \in B \subset D} \dashint_{B} \lvert f-(f)_{B}\rvert \,\d y,
\end{equation}
where we are taking the supremum over balls $B.$ Using this we can define the John-Nirenberg space $\BMO(D,\bb R^{Nn})$ of functions of \emph{bounded mean oscillation} in $D$ as the space of $f \in L^1_{\loc}(D,\bb R^{Nn})$ for which $\cal M_{D}^{\#}f \in L^{\infty}(D).$ We equip this space with the seminorm $\seminorm{f}_{\BMO(D)} = \lVert M_{D}^{\#}f\rVert_{L^{\infty}(D)}.$

While we wish to apply the results in this section to domains which are piecewise $C^{1,\beta},$ in order to understand the dependence of constants on the domain $D$ it will be convenient to work with \emph{John domains}; these were first introduced by \textsc{John} \cite{article:John61} and later named by \textsc{Martio \& Sarvas} \cite{article:MartioSarvas79}. The definition given here is slightly different to what appeared in the original papers, but can be found for instance in \cite{article:SmithStegenga90}.

\begin{defn}
  For $\delta \in (0,1)$ we say bounded domain $D \subset \bb R^n$ is a $\delta$-\emph{John domain} if there exists $x_0 \in D,$ called the \emph{John centre}, such that for all $x \in D$ there is a rectifiable curve $\gamma \colon [0,d] \to D$ parametrised by arclength such that $\gamma(0)=x,$ $\gamma(d) = x_0,$ and
  \begin{equation}
    \dist(\gamma(t),\partial D)  \geq \delta t
  \end{equation}
  for all $t \in [0,d].$
\end{defn}

This can be viewed as a \emph{twisted cone condition}, and since bounded Lipschitz domains satisfy a uniform cone condition (see for instance \cite[Section 4.4]{book:AdamsFournier03}) it follows that they are John domains. Moreover we have the following localisation property.

\begin{prop}\label{prop:local_john}
  Let $\Omega$ be a $C^1$ domain. Then there is $R_0 > 0$ and $\delta=\delta(n)>0$ such that for all $x_0 \in \overline\Omega$ and $0<R<R_0,$ we have $\Omega_R(x_0)$ is a $\delta$-John domain.
\end{prop}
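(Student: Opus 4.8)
The plan is to prove this by a compactness-and-scaling argument, reducing to the flat case via the $C^1$ chart and exploiting that a uniform cone condition is stable under small perturbations. First I would fix $x_0 \in \overline\Omega$ and use the $C^1$ regularity of $\partial\Omega$ to straighten the boundary near $x_0$: there is a radius $r_0 = r_0(\Omega)>0$, independent of $x_0$ by compactness of $\partial\Omega$, and a $C^1$ diffeomorphism $\Psi_{x_0}$ from $B_{r_0}(x_0)$ onto a neighbourhood of the origin which maps $\Omega \cap B_{r_0}(x_0)$ into the upper half-space and $\partial\Omega \cap B_{r_0}(x_0)$ into $\{x_n = 0\}$, with $\Psi_{x_0}(x_0)=0$, and with $D\Psi_{x_0}(x_0)$ orthogonal. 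Because $\partial\Omega$ is (globally) $C^1$ and compact, the derivatives $D\Psi_{x_0}$ are equicontinuous in $x_0$, so after shrinking $r_0$ we may assume $\|D\Psi_{x_0} - D\Psi_{x_0}(x_0)\|_{L^\infty(B_{r_0}(x_0))} \leq \tfrac12$ uniformly in $x_0$; hence each $\Psi_{x_0}$ is bi-Lipschitz with constants depending only on $n$ (and $\Omega$), and distances to the boundary are comparably distorted: $\tfrac12 \dist(y,\partial\Omega) \le \dist(\Psi_{x_0}(y), \{x_n=0\}) \le 2\dist(y,\partial\Omega)$ for $y$ in the relevant set.

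The second step is to verify the John condition for $\Omega_R(x_0)$ when $R < R_0 := r_0/4$, separating the two cases $\dist(x_0,\partial\Omega) \ge 2R$ and $\dist(x_0,\partial\Omega) < 2R$. In the interior case $\Omega_R(x_0) = B_R(x_0)$ is a ball, which is trivially a $\delta$-John domain with $\delta$ absolute (take the centre as John centre; the radial segment works with $\delta = 1$ up to reparametrisation, so certainly $\delta = \delta(n)$ works). In the boundary-touching case, I would choose the John centre to be a point $z^* \in \Omega_R(x_0)$ at definite distance from $\partial\Omega$ and from $\partial B_R(x_0)$ — concretely, pick $z^* = x_0 + c R\, \nu$ where $\nu$ is an inward direction witnessed by the chart and $c=c(n)$ is small; then $\dist(z^*,\partial\Omega_R(x_0)) \gtrsim_n R$. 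For an arbitrary $x \in \Omega_R(x_0)$, transport to the half-space via $\Psi_{x_0}$: in the half-ball-like image domain $\widetilde\Omega = \Psi_{x_0}(\Omega_R(x_0))$, which contains a half-ball $B_{cR}^+(0)$ and is contained in $B_{2R}^+(0)$, one can explicitly connect $\Psi_{x_0}(x)$ to $\Psi_{x_0}(z^*)$ by a curve (first move straight up away from $\{x_n=0\}$, then travel horizontally, then down) whose distance to $\partial\widetilde\Omega$ is bounded below linearly in arclength, with John constant $\delta_0 = \delta_0(n)$; pulling this curve back through the bi-Lipschitz $\Psi_{x_0}^{-1}$ degrades the John constant by at most the (dimension-only) bi-Lipschitz factor, giving the desired $\delta = \delta(n)$.

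The main obstacle I anticipate is \emph{uniformity of the charts in the base point $x_0 \in \overline\Omega$} — in particular handling points $x_0$ on or very near $\partial\Omega$ with the same $R_0$ and the same $\delta$. This is where compactness of $\partial\Omega$ together with the \emph{global} $C^1$ (hence uniformly continuous first derivatives) hypothesis is essential: it gives a single modulus of continuity for the normal, hence a single $r_0$ on which the boundary is trapped between two Lipschitz graphs with slope $\le 1$, which is exactly what makes the explicit half-space curve construction go through with constants depending on $n$ only. A secondary technical point is the bookkeeping at the "corner" $\partial B_R(x_0) \cap \Omega$: one must check that the piece of $\Omega_R(x_0)$ near this set still admits admissible curves to $z^*$, but since $z^*$ sits at distance $\gtrsim_n R$ from \emph{all} of $\partial\Omega_R(x_0)$, the same three-segment (or suitably rounded) curve handles these points as well, and the linear lower bound on $\dist(\gamma(t),\partial\Omega_R(x_0))$ is preserved. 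Finally, one records that the resulting $\delta$ is indeed independent of $\Omega$ beyond dimension, as claimed, because all estimates after the bi-Lipschitz reduction are on model half-balls.
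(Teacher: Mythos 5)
Your overall strategy (uniform $C^1$ charts by compactness, the interior/boundary case split, a John centre $z^*$ at distance $\simeq R$ from $\partial\Omega_R(x_0)$, and transferring the John property through a controlled bi-Lipschitz map) is reasonable, and it differs from the paper only in that the paper never flattens: it proves the proposition via Lemma \ref{lem:lipschitz_localjohn}, working directly in graph coordinates for an $L$-Lipschitz boundary with $L<1$, and then uses that a $C^1$ domain is, at small scales and uniformly in the base point, an $L$-Lipschitz graph with $L$ as small as desired. However, your key geometric step contains a genuine gap. The curve you propose in the flattened domain --- straight up away from $\{x_n=0\}$, then horizontal, then down --- does not satisfy the John condition near the corner $\partial B_R(x_0)\cap\partial\Omega$. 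Already in the exact model half-ball $B_R^+$: if $x$ has $\lvert x\rvert$ close to $R$ and small $x_n$, then moving in the $+e_n$ direction increases $\lvert x\rvert$, hence \emph{decreases} the distance to the lateral (spherical) boundary and exits the domain after an arclength comparable to $\dist(x,\partial B_R)$, so the claimed linear lower bound on $\dist(\gamma(t),\partial\widetilde\Omega)$ fails precisely where the difficulty lies. Your reason for dismissing the corner issue (that $z^*$ is at distance $\gtrsim R$ from all of $\partial\Omega_R(x_0)$) concerns the terminal part of the curve, not the initial segment, which is where the problem occurs. Relatedly, the containment $B^+_{cR}\subset\widetilde\Omega\subset B^+_{2R}$ alone cannot give the John property; you must use the structure of $\partial\widetilde\Omega$ (a flat piece together with a nearly spherical lateral piece) and carry out a quantitative curve construction there --- and that construction is exactly the nontrivial content, which the proposal leaves unproved.

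The gap is fixable within your framework, and the fix shows what the paper is actually doing. Replace the three-segment curve by the straight segment from $x$ to the John centre $z^*$ (say $z^*=x_0+\tfrac R2\nu$ after centring the chart at a boundary point; note also that for $x_0\in\Omega$ with $0<\dist(x_0,\partial\Omega)<2R$ the chart must be centred at a nearby boundary point, not at $x_0$). Then verify two separate linear lower bounds along the segment: the distance to $\partial B_R(x_0)$ grows linearly by convexity of the ball, since $B_{R/2}(z^*)\subset B_R(x_0)$ (this is the Martio--Sarvas observation the paper also invokes), and the distance to the graph grows linearly because, when the local Lipschitz constant $L$ is small, the direction from any point of $\Omega_R(x_0)$ to $z^*$ lies well inside the cone $C\left(e_n,\tfrac\pi2-\theta_L\right)$ which the graph condition places inside $\Omega$; taking the minimum of the two bounds gives $\delta=\delta(n)$. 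The paper's Lemma \ref{lem:lipschitz_localjohn} is stronger --- it treats every $L<1$, for which the straight segment can fail, and therefore first follows a path rotating towards the $e_n$-axis (tangentially to the spheres, with the cosine-rule estimate controlling the distance to $\partial B$) before the final straight segment --- but for the proposition itself only the small-$L$ case is needed, so the simpler segment argument suffices once the corner estimate is actually carried out.
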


We will postpone the proof to Section \ref{sec:boundary_averages}, which may be of independent interest.
This is the main reason why we have introduced these domains; if we can establish estimates in domains $\Omega_R(x_0)$ where the associated constant only depends on $\delta,$ then the constant holds uniformly among these domains.
This is particularly useful for our purposes where the dependence on $\delta$ naturally enters when considering estimates involving $\BMO.$
However we believe this is more generally a useful way to keep track of the constants for various technical estimates applied on $\Omega_R(x_0),$ which may not be easily controlled by the Lipschitz norm.

The first result we need is a global version of the John-Nirenberg inequality, which was proved in greater generality by \textsc{Smith \& Stegenga} \cite{article:SmithStegenga91} and \textsc{Hurri-Syrj\"anen} \cite{article:Hurri-Syrjanen93}. We will sketch the proof to clarify the dependence of constants.

\begin{prop}[Global John-Nirenberg estimate {\cite{article:SmithStegenga91,article:Hurri-Syrjanen93}}]\label{prop:global_JohnNirenberg}
  Suppose $D$ is a bounded $\delta$-John domain, and $f \in \BMO(D,\bb R^{Nn}).$ Then for all $1 \leq p < \infty,$ there is $C = C(n,p,\delta) > 0$ such that
  \begin{equation}
    \left(\dashint_{D} \lvert f-(f)_{D}\rvert \,\d x\right)^{\frac1p} \leq C\seminorm{f}_{\BMO(D)}.
  \end{equation}
\end{prop}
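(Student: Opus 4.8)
The plan is to reduce the estimate to a single ``central'' Whitney cube and then transport the bound across all of $D$ using the chaining structure of John domains, as in \cite{article:SmithStegenga91,article:Hurri-Syrjanen93}. Write $[f]=\seminorm{f}_{\BMO(D)}$; it suffices to prove $\dashint_D\lvert f-(f)_D\rvert^p\,\d x\le C(n,p,\delta)^p[f]^p$, and since $\lvert(f)_D-(f)_{Q_0}\rvert^p\le\dashint_D\lvert f-(f)_{Q_0}\rvert^p\,\d x$ for any $Q_0\subset D$ by Jensen's inequality, it is enough to bound $\dashint_D\lvert f-(f)_{Q_0}\rvert^p\,\d x$ for a conveniently chosen $Q_0$. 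First I would fix a Whitney decomposition $\cal W=\{Q\}$ of $D$ into dyadic cubes with $\diam Q\le\dist(Q,\partial D)\le 4\diam Q$, disjoint interiors and bounded overlap of slightly dilated cubes (with $n$-dependent constants only), and take $Q_0\in\cal W$ containing the John centre $x_0$; the John property forces $\dist(x_0,\partial D)\gtrsim_\delta\diam D$, so $\lvert Q_0\rvert\sim_{n,\delta}\lvert D\rvert$.

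Next I would invoke the Boman chain structure that a $\delta$-John domain imposes on $\cal W$, with constants depending only on $n$ and $\delta$: for each $Q\in\cal W$ a chain $Q=Q^0,Q^1,\dots,Q^{m(Q)}=Q_0$ in $\cal W$ with $Q^i\cap Q^{i+1}\ne\emptyset$, $\diam Q^i\sim_n\diam Q^{i+1}$, $m(Q)\le C(n,\delta)\bigl(1+\log(\diam D/\diam Q)\bigr)$, and the bounded-multiplicity bound $\sum_{Q:\,R\in\mathrm{chain}(Q)}\lvert Q\rvert\le C(n,\delta)\lvert R\rvert$ for all $R\in\cal W$. Comparing the averages over consecutive chain cubes to the average over $Q^i\cup Q^{i+1}$ (whose measure is comparable to those of $Q^i$ and $Q^{i+1}$) gives $\lvert(f)_{Q^i}-(f)_{Q^{i+1}}\rvert\le C(n)[f]$, so telescoping along the chain yields, for a.e.\ $x\in Q\in\cal W$,
\begin{equation*}
  \lvert f(x)-(f)_{Q_0}\rvert\le\lvert f(x)-(f)_Q\rvert+C(n)\,(m(Q)+1)\,[f].
\end{equation*}

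Then I would raise this to the $p$-th power and integrate over $D$, which is the union (up to a null set) of the $Q\in\cal W$. Applying the classical interior John--Nirenberg inequality on each cube (constant depending only on $n$ and $p$) to the first term gives
\begin{equation*}
  \int_D\lvert f-(f)_{Q_0}\rvert^p\,\d x\le C(n,p)[f]^p\Bigl(\sum_{Q\in\cal W}\lvert Q\rvert+\sum_{Q\in\cal W}\lvert Q\rvert(m(Q)+1)^p\Bigr),
\end{equation*}
the first sum being $\le C(n)\lvert D\rvert$ by disjointness and bounded overlap. For the second sum I would use the bounded-multiplicity bound to obtain $\sum_{Q\in\cal W}\lvert Q\rvert(m(Q)+1)^p\le C(n,p,\delta)\lvert D\rvert$, via $\ell^p$-duality against the chain-counting operator (elementary when $p=1$); alternatively one may run a good-$\lambda$ iteration along the chains to get genuine exponential integrability of $\lvert f-(f)_{Q_0}\rvert/[f]$ with $(n,\delta)$-dependent constants, which implies the $L^p$ bound a fortiori. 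Dividing by $\lvert D\rvert$ completes the argument.

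The hard part is precisely this chaining/combinatorial step; everything else is bookkeeping. The only delicate point in matching the asserted dependence $C=C(n,p,\delta)$ is extracting from the John geometry a chain decomposition whose multiplicity bound — and hence the sum $\sum_{Q}\lvert Q\rvert(m(Q)+1)^p$ — is controlled purely in terms of $n$ and $\delta$, and not through finer invariants of $\partial D$ such as its box-counting dimension (along a fractal boundary a naive scale-by-scale count would introduce dimension-dependent constants). This uniformity is exactly what \cite{article:SmithStegenga91,article:Hurri-Syrjanen93} provide, and in the application Proposition \ref{prop:local_john} guarantees that the John constant $\delta$ can be taken the same for all the domains $\Omega_R(x_0)$.
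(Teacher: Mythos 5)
Your skeleton is essentially the one the paper uses: Whitney decomposition, the classical John--Nirenberg inequality on each cube, and a chain from each cube to a central cube $Q_0$ containing the John centre; your chain length $m(Q)$ is, up to dimensional constants, the quasi-hyperbolic distance $k_D(x_0,\cdot)$ on $Q$, and your target bound $\sum_Q \lvert Q\rvert (m(Q)+1)^p \leq C(n,p,\delta)\,\cal L^n(D)$ is exactly the paper's reduction to $\int_D k_D(x_0,x)^p\,\d x \leq C(n,p,\delta)\,\cal L^n(D)$.

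The one place where your argument as written does not go through is the step you yourself flag as the hard part. The bounded-multiplicity (shadow) bound $\sum_{Q:\,R\in\mathrm{chain}(Q)}\lvert Q\rvert\leq C\lvert R\rvert$ only gives $\sum_Q\lvert Q\rvert\, m(Q)\leq C\,\cal L^n(D)$, i.e.\ the case $p=1$ and a corresponding weak-type estimate; no ``$\ell^p$-duality against the chain-counting operator'' upgrades this to the $p$-th power bound, because $L^1$ control of $m$ with respect to the weights $\lvert Q\rvert$ says nothing about its higher moments (the natural dualisation $\sum_Q\lvert Q\rvert m(Q)^p=\sum_R\sum_{Q:\,R\in\mathrm{chain}(Q)}\lvert Q\rvert m(Q)^{p-1}$ runs in the wrong direction, since $m(Q)\geq m(R)$ along coherent chains). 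What is actually needed is the logarithmic chain-length bound $m(Q)\leq C(n,\delta)\bigl(1+\log(\diam D/\diam Q)\bigr)$ \emph{together} with a quantitative decay of the measure of the Whitney layers $\{x\in D:\dist(x,\partial D)\sim 2^{-k}\diam D\}$, which for John domains follows from the exponential integrability of the quasi-hyperbolic distance; this is precisely the content of the Gehring--Martio pointwise bound on $k_D$ and the Smith--Stegenga integrability estimate that the paper invokes (with constants depending only on $n,p,\delta$). Your alternative suggestion --- a good-$\lambda$ iteration along chains yielding exponential integrability of $\lvert f-(f)_{Q_0}\rvert/\seminorm{f}_{\BMO(D)}$ --- is a legitimate route (it is essentially what \cite{article:SmithStegenga91,article:Hurri-Syrjanen93} carry out), but it is asserted rather than performed, and it is exactly where the John geometry, not merely bounded multiplicity, must enter.
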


\begin{proof}[Proof sketch]
  The strategy is to take a Whitney decomposition $W = \{ Q_j\}$ of $D$ as given in \cite[Section VI.1]{book:stein_differentiability}, and apply the John-Nirenberg inequality on each $Q_j,$ which is is easily adapted from the original argument in \cite{article:JohnNirenberg61} (see also \cite[Corollary 2.2]{book:Giusti03}). To patch these local estimates we can use Whitney chains following \cite{article:Jones80} to show that 
  \begin{equation}
      \int_D \lvert f-(f)_D\rvert^p \,\d x \leq C(n,p)\left( \cal L^n(D) + \int_D k_D(x_0,x)^p \,\d x\right) \seminorm{f}_{\BMO(D)}^p,
  \end{equation}
  for a distinguished point $x_0 \in D,$ where $k_D$ is the \emph{quasi-hyperbolic distance} introduced in \cite{article:GehringPalka76} defined by
  \begin{equation}
    k_{D}(x_1,x_2) = \inf_{\gamma} \int_{\gamma} \frac1{\dist(x,\partial\Omega)} \,\d t,
  \end{equation}
  taking the infimum over all rectifiable curves $\gamma$ connecting $x_1, x_2 \in D.$ To verify the integrability of $k_D(x_0,\cdot)^p,$ letting $x_0$ be the John centre it is shown in \cite{article:GehringMartio85} that for all $x \in D,$
  \begin{equation}
    k_D(x_0,x) \leq \frac1{\delta} \log\frac{\dist(x_0,\partial\Omega)}{\dist(x,\partial\Omega)} + \frac1{\delta}\left(1+\log\left(1+\delta^{-1}\right)\right).
  \end{equation}
  Using this and keeping track of constants in the proof of \cite[Theorem 4]{article:SmithStegenga90} we have $k_D$ satisfies the integrability condition
  \begin{equation}
    \int_D k_D(x_0,x)^p \,\d x \leq C(n,p,\delta) \cal L^n(D),
  \end{equation}
  from which the result follows.
\end{proof}

We will also need an modular version of the Fefferman-Stein theorem \cite[Theorem 5]{article:FeffermanStein72} that holds up to the boundary. This estimate in the full space appeared in the work of \textsc{Kristensen \& Taheri} \cite{article:KristensenTaheri03} where is was proven by means of a good-$\lambda$ estimate, however to obtain estimates up to the boundary we will need a more refined approach using the extrapolation results of \textsc{Cruz-Uribe, Martell \& P\'erez} \cite{article:CruzUribeMartellPerez06}. We will briefly recall the notions of \emph{$N$-functions} considered in \cite{article:CruzUribeMartellPerez06}; these are mappings $\Phi \colon [0,\infty) \to [0,\infty)$ which are continuous, convex, and strictly increasing such that 
\begin{equation}
  \lim_{t \to 0^+} \frac{\Phi(t)}t = 0, \quad \lim_{t \to \infty} \frac{\Phi(t)}t = \infty.
\end{equation}
For such a $\Phi$ we can associate a conjugate function $\overline\Phi(t) = \sup_{s>0} \{st - \Phi(s)\},$ which can be shown to also be an $N$-function. We say  $\Phi \in \Delta_2$ if there is $C>0$ such that the doubling property $\Phi(2t) \leq C\Phi(t)$ holds, in which case the minimal $C$ will be denoted by $\Delta_2(\Phi).$ We also say $\Phi \in \nabla_2$ if $\overline\Phi \in \Delta_2$ and write $\nabla_2(\Phi) = \Delta_2(\overline\Phi);$ note this holds if there is $r>0$ such that $\Phi(rt) \geq 2r \Phi(t)$ for all $t \geq 0.$

\begin{prop}[Modular Fefferman-Stein estimate]\label{prop:fefferman_stein}
  Let $D \subset \bb R^n$ be a bounded $\delta$-John domain, and $\Phi$ an $N$-function such that $\Phi \in \Delta_2 \cap \nabla_2.$ Then there is $C=C(n,\delta,\Delta_2(\Phi),\nabla_2(\Phi))>0$ such that
  \begin{equation}
    \int_{D} \Phi(\lvert f-(f)_{D}\rvert) \,\d x \leq C \int_{D} \Phi\left(\cal M^{\#}_{D}f\right) \,\d x
  \end{equation}
  for all $f \in L^1_{\loc}(D,\bb R^{Nn})$ such that both sides are finite.
\end{prop}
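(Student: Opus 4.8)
The plan is to deduce the modular estimate from the well-known weighted $L^p$ version of the Fefferman–Stein inequality by invoking the extrapolation machinery of \textsc{Cruz-Uribe, Martell \& P\'erez} \cite{article:CruzUribeMartellPerez06}. That framework works with pairs of functions $(g,h)$ and families of such pairs: if an inequality $\int_{\bb R^n} g^{p_0} w \,\d x \leq C \int_{\bb R^n} h^{p_0} w \,\d x$ holds for every $w$ in the Muckenhoupt class $A_{p_0}$ (with $C$ depending only on $[w]_{A_{p_0}}$), then for every $N$-function $\Phi \in \Delta_2 \cap \nabla_2$ one has $\int_{\bb R^n} \Phi(g) \,\d x \leq C' \int_{\bb R^n} \Phi(h) \,\d x$, with $C'$ controlled by $n$, $\Delta_2(\Phi)$, $\nabla_2(\Phi)$ and the constant from the weighted inequality. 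So the first step is to fix an exponent such as $p_0 = 2$ and record the weighted estimate $\int_{D} \lvert f-(f)_{D}\rvert^{2} w \,\d x \leq C([w]_{A_2},n,\delta) \int_{D} (\cal M^{\#}_{D}f)^{2} w \,\d x$ for all $w \in A_2(D)$.

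The weighted inequality itself should be obtained by the standard route: the Fefferman–Stein sharp-function inequality $\norm{g}_{L^p(w)} \leq C \norm{\cal M^{\#}g}_{L^p(w)}$ is classical on $\bb R^n$ for $A_\infty$ weights (see \cite{book:stein_differentiability}), and what is needed here is its analogue on the John domain $D$ with $g = f - (f)_D$. To transfer it to $D$, I would extend $f$ from $D$ to $\bb R^n$ using a $\BMO$-extension operator for John domains—such an extension exists with norm bounded in terms of $\delta$ and $n$ by the results already cited in the proof sketch of Proposition \ref{prop:global_JohnNirenberg} (the Whitney-chain arguments of \textsc{Jones} \cite{article:Jones80} and \textsc{Smith \& Stegenga} \cite{article:SmithStegenga90})—apply the full-space weighted estimate to the extension, and then restrict back, controlling $\cal M^{\#}_{\bb R^n}(\tilde f)$ on $D$ by $\cal M^{\#}_{D} f$ up to the extension constant and a comparison of averages on $D$ (which again costs only a $\delta$-dependent factor via the John-Nirenberg estimate). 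Care must be taken that the reference cube or ball used in the full-space sharp function can be comped to the average over $D$, but this is routine for John domains. I would also need to observe $\Delta_2 \cap \nabla_2$ forces $\Phi$ to satisfy $t^{p_0} \lesssim \Phi(t)/\Phi(1) \lesssim t^{p_1}$ for some $1 < p_0 \le p_1 < \infty$, so the finiteness hypotheses on both sides make the extrapolation output meaningful.

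The main obstacle I anticipate is the bookkeeping of the constant: the extrapolation theorem in \cite{article:CruzUribeMartellPerez06} is stated on $\bb R^n$ with Lebesgue measure, whereas we work on the bounded domain $D$, and we must certify that every intermediate constant (the $\BMO$- and weight-extension norms, the $A_2$-to-$A_\infty$ comparison, the John–Nirenberg patching) depends on $D$ \emph{only} through $\delta$ and $n$, so that Proposition \ref{prop:local_john} can later be used to get a single constant valid simultaneously on all the rescaled pieces $\Omega_R(x_0)$. A clean way to handle the extrapolation on $D$ is to apply the abstract $A_p$-extrapolation to the pair family $\{(\lvert f-(f)_D\rvert \mathbf{1}_D, \cal M^{\#}_D f \,\mathbf{1}_D)\}$ on $\bb R^n$: since we only need weighted control for weights restricted to $D$, and any $A_2(D)$ weight extends to an $A_2(\bb R^n)$ weight with comparable constant (again a John-domain fact), the black box applies directly and outputs exactly the claimed modular inequality with the stated dependence.
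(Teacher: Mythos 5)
Your second half (extrapolation of Cruz-Uribe--Martell--P\'erez applied to the truncated pairs $(\lvert f-(f)_D\rvert 1_D,\ \cal M^{\#}_D f\,1_D)$) is exactly the route the paper takes, and with that truncation the extension of $A_2(D)$ weights you worry about is not needed at all: one only has to verify the weighted inequality on $D$ for restrictions of global $A_2$ weights. The genuine gap is in how you propose to get that weighted inequality. You want to extend $f$ from $D$ to $\bb R^n$ by a $\BMO$-extension operator ``for John domains'' with norm depending only on $n,\delta$, but no such operator exists for this class: by Jones' theorem the $\BMO$-extension domains are precisely the uniform domains, and a $\delta$-John domain need not be uniform (the slit disk is the standard counterexample). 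The chaining results of Jones and Smith--Stegenga cited in the proof of Proposition \ref{prop:global_JohnNirenberg} are intrinsic Whitney-chain arguments inside $D$; they do not produce an extension. Hence the constant in your scheme cannot be certified to depend on $D$ only through $\delta$, which is the whole point of the statement.

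Even granting an extension $\tilde f$, the restriction step does not go through as claimed: the full-space weighted Fefferman--Stein inequality bounds $\int_{\bb R^n}\lvert \tilde f - c\rvert^2 w$ by $\int_{\bb R^n}(\cal M^{\#}\tilde f)^2 w$, and outside $D$ (and near $\partial D$) the sharp function of $\tilde f$ is controlled only by the $\BMO$ seminorm of $f$, not pointwise by $\cal M^{\#}_D f$; what you would recover after restricting is a John--Nirenberg-type bound with $\seminorm{f}_{\BMO(D)}$ on the right, which is strictly weaker than the stated modular estimate with the local sharp function inside $\Phi$. The paper avoids both issues by arguing intrinsically: the weighted estimate is first proved on cubes (Corollary 7.2 of Diening--R\r{u}\v{z}i\v{c}ka--Schumacher, combined with $\lvert f-(f)_Q\rvert \leq \cal M_Q^{\#}f$), then globalised to $D$ via Boman's lemma that a $\delta$-John domain satisfies the chain condition required by Iwaniec--Nolder's local-to-global theorem, yielding $\int_D \lvert f-(f)_{Q_0}\rvert^p w \leq C(n,p,\seminorm{w}_{A_p},\delta)\int_D (\cal M_D^{\#}f)^p w$ for a distinguished cube $Q_0$; after extrapolation one replaces $(f)_{Q_0}$ by $(f)_D$ using convexity and the doubling of $\Phi$. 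If you want to salvage your approach you would have to either restrict the proposition to uniform (e.g.\ Lipschitz) domains or replace the extension step by such an intrinsic chain argument.
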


This result is essentially proved in the work of \textsc{Diening, R\r{u}\v{z}i\v{c}ka, \& Schumacher} \cite{article:DRS10} in greater generality, however to obtain a modular estimate a slight modification is required in the proof.

\begin{proof}
  We first need a weighted $L^p$ estimate in $D$, so let $1<p<\infty$ and $w \in A_p.$ Then for any cube $Q$ it is shown in \cite[Corllary 7.2]{article:DRS10} that
  \begin{equation}
    \int_Q \lvert f\rvert^p w\,\d x \leq C\left(n,p,\seminorm{w}_{A_p}\right) \left( \int_Q \lvert \cal M_Q^{\#}f\rvert^p w \,\d x + \int_Q w \,\d x \left(\dashint_Q \lvert f\rvert \,\d x\right)^p\right)
  \end{equation}
  for all $f \in L^p(Q,w,\bb R^{Nn}),$ that is $f \colon Q \to \bb R^{Nn}$ such that $\lvert f\rvert^pw$ is integrable on $Q.$ 
  By applying this to $f - (f)_Q$ and noting that $\lvert f - (f)_Q\rvert \leq \cal M_Q^{\#}f$ we deduce that
  \begin{equation}
    \int_Q \lvert f-(f)_Q\rvert^p w\,\d x \leq C\left(n,p,\seminorm{w}_{A_p}\right) \int_Q \lvert \cal M_Q^{\#}f\rvert^p w \,\d x.
  \end{equation}
  To extend this to John domains we can apply \cite[Theorem 3]{article:IwaniecNolder85}; note it is proved in \cite[Lemma 2.1]{article:Boman82} that a $\delta$-John domain $D$ is a $\cal F(\sigma,N)$-domain as in \cite{article:IwaniecNolder85}, where $\sigma = \min\left\{\frac{10}9,\frac{n+1}n\right\}$ and $N=N(n,\delta).$
  Thus we obtain
  \begin{equation}
    \int_D \lvert f-(f)_{Q_0}\rvert^pw \,\d x \leq C\left(n,p,\seminorm{w}_{A_p},\delta\right) \int_D \lvert \cal M_D^{\#}f\rvert^p w \,\d x
  \end{equation}
  for all $1<p<\infty$ and $w \in A_p,$ for a distinguished cube $Q_0 \subset D.$ 
  A similar estimate appears in \cite[Theorem 5.23]{article:DRS10}, however the above is slightly sharper as we estimate $\lvert f - (f)_{Q_0}\rvert$ instead of $\lvert f - (f)_D\rvert$ which is important in the sequel.

  Now we apply the modular extrapolation theorem in \cite{article:CruzUribeMartellPerez06} (see also \cite[Chapter 4]{book:CruzUribeMartellPerez11}) to the family of pairs $(\lvert f-f_{Q_0}\rvert 1_{D}, \lvert M^{\#}_Df\rvert 1_D)$ we obtain
  \begin{equation}
    \int_{D} \Phi(\lvert f-(f)_{Q_0}\rvert) \,\d x \leq C(n,\delta,\Delta_2(\Phi),\nabla_2(\Phi)) \int_{D} \Phi\left(\cal M^{\#}_{D}f\right) \,\d x.
  \end{equation}
  Replacing the average $(f)_{Q_0}$ by $(f)_D$ using the doubling property and convexity of $\Phi,$ the results follows.
\end{proof}

We wish to apply this result to $\Phi(t) = \omega(t)t^p$ with $p>1,$ where $\omega \colon [0,\infty) \to [0,1]$ is a continuous, non-decreasing, concave function such that $\omega(0)=0$ as in Section \ref{sec:autonomous_integrand}. A technical complication arises as this need not be convex in general, but adapting a construction in \textsc{Kokilashvili \& Krbec} \cite{book:KokilashviliKrbec91} we can work with a modified $\widetilde\Phi$ which is convex instead. 

\begin{cor}\label{cor:BMO_modulus}
  Suppose $D \subset \bb R^n$ is a bounded $\delta$-John domain, $1<p<\infty,$ and $\omega \colon [0,\infty) \to [0,1]$ is non-decreasing, continuous, concave with $\omega(0)=0.$ Then if $f \in \BMO(D,\bb R^{Nn}),$ for each $1<p<\infty$ there is $C = C(n,p,\delta)>0$ such that
  \begin{equation}
    \int_{D} \omega(\lvert f-(f)_D\rvert)\lvert f-(f)_D\rvert^p \,\d x \leq C\,\omega\left(\seminorm{f}_{\BMO(D)}\right)\int_{D} \lvert f-(f)_D\rvert^p \,\d x.
  \end{equation}
\end{cor}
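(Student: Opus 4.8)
The plan is to reduce this to the modular Fefferman-Stein estimate (Proposition \ref{prop:fefferman_stein}) applied to a suitable $N$-function $\widetilde\Phi$ comparable to $\Phi(t) = \omega(t)t^p$. The naive candidate $\Phi(t) = \omega(t)t^p$ need not be convex, so first I would construct a convex replacement. Following the idea in \textsc{Kokilashvili \& Krbec} \cite{book:KokilashviliKrbec91}, I would set $\widetilde\Phi(t) = \int_0^t \varphi(s)\,\d s$ where $\varphi$ is a non-decreasing, right-continuous majorant of $s \mapsto \tfrac{\d}{\d s}(\omega(s)s^p)$ (or more robustly, define $\widetilde\Phi(t) = p\int_0^t \omega(s) s^{p-1}\,\d s$, which is automatically convex since $\omega(s)s^{p-1}$ is non-decreasing as a product of two non-negative non-decreasing functions, using $p \geq 1$). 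One checks the two-sided comparison $c\,\omega(t)t^p \leq \widetilde\Phi(t) \leq \omega(t)t^p$ for a dimensional-free constant: the upper bound is immediate since $\omega$ is non-decreasing so $\omega(s) \leq \omega(t)$ for $s \leq t$, giving $\widetilde\Phi(t) \leq \omega(t)\cdot p\int_0^t s^{p-1}\,\d s = \omega(t)t^p$; the lower bound follows by restricting the integral to $[t/2,t]$ and using concavity of $\omega$ (so $\omega(s) \geq \omega(t/2) \geq \tfrac12\omega(t)$ there), yielding $\widetilde\Phi(t) \gtrsim_p \omega(t)t^p$.

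Next I would verify that $\widetilde\Phi$ is an $N$-function lying in $\Delta_2 \cap \nabla_2$ with constants depending only on $p$. The $N$-function properties are clear from the integral representation and $\omega(0)=0$, $\omega \leq 1$: continuity, convexity, $\widetilde\Phi(t)/t \to 0$ as $t \to 0^+$ (since the integrand is bounded near $0$ by $t^{p-1} \to 0$), and $\widetilde\Phi(t)/t \to \infty$ as $t \to \infty$ requires $\omega$ to be eventually bounded below by a positive constant — if $\omega \equiv 0$ the statement is trivial, and otherwise $\omega(t) \geq \omega(t_0) > 0$ for $t \geq t_0$ so $\widetilde\Phi(t)/t \gtrsim t^{p-1} \to \infty$. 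For the doubling properties: from the comparison $\widetilde\Phi(t) \asymp_p \omega(t)t^p$ we get $\widetilde\Phi(2t) \lesssim_p \omega(2t)(2t)^p \leq 2^p \omega(2t)t^p$, but $\omega(2t) \leq 2\omega(t)$ by concavity, so $\widetilde\Phi(2t) \lesssim_p 2^{p+1}\widetilde\Phi(t)$, giving $\Delta_2(\widetilde\Phi) \leq C(p)$. For $\nabla_2$, it suffices to exhibit $r>1$ with $\widetilde\Phi(rt) \geq 2r\widetilde\Phi(t)$; using the comparison and $\omega(rt) \geq \omega(t)$ (non-decreasing) we have $\widetilde\Phi(rt) \gtrsim_p \omega(rt)(rt)^p \geq r^p \omega(t)t^p \gtrsim_p r^p \widetilde\Phi(t)$, so for $r$ large depending only on $p$ this exceeds $2r\widetilde\Phi(t)$.

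Now I would apply Proposition \ref{prop:fefferman_stein} with $\Phi = \widetilde\Phi$ to obtain $\int_D \widetilde\Phi(|f-(f)_D|)\,\d x \leq C(n,\delta,p)\int_D \widetilde\Phi(\cal M^\#_D f)\,\d x$. Using the upper comparison on the right and the lower comparison on the left gives $\int_D \omega(|f-(f)_D|)|f-(f)_D|^p\,\d x \leq C\int_D \omega(\cal M^\#_D f)(\cal M^\#_D f)^p\,\d x$. Since $\cal M^\#_D f \leq \seminorm{f}_{\BMO(D)}$ pointwise and $\omega$ is non-decreasing, $\omega(\cal M^\#_D f) \leq \omega(\seminorm{f}_{\BMO(D)})$, so the right-hand side is at most $\omega(\seminorm{f}_{\BMO(D)})\int_D (\cal M^\#_D f)^p\,\d x$. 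Finally the global John-Nirenberg estimate (Proposition \ref{prop:global_JohnNirenberg}) bounds $\int_D (\cal M^\#_D f)^p\,\d x \leq C(n,p,\delta)\seminorm{f}_{\BMO(D)}^p \cal L^n(D)$, but more directly I would instead bound $\int_D (\cal M^\#_D f)^p\,\d x \leq \seminorm{f}_{\BMO(D)}^{p}\cal L^n(D)$ trivially and then use Proposition \ref{prop:global_JohnNirenberg} to replace $\seminorm{f}_{\BMO(D)}^p\cal L^n(D)$ by $C\int_D |f-(f)_D|^p\,\d x$; alternatively and more cleanly, apply Proposition \ref{prop:fefferman_stein} a second time in the reverse direction is not needed — instead note $\int_D (\cal M^\#_D f)^p \leq C \int_D |f - (f)_D|^p$ follows from Proposition \ref{prop:fefferman_stein} with $\Phi(t) = t^p$ combined with the pointwise bound, but this is circular; the honest route is John-Nirenberg giving $\int_D(\cal M^\#_D f)^p \leq \seminorm{f}^p_{\BMO}\cal L^n(D) \leq C\int_D |f-(f)_D|^p\,\d x$ where the last step is Jensen. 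Assembling these yields the claimed inequality with $C = C(n,p,\delta)$.

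\textbf{Main obstacle.} The delicate point is the construction of the convex $N$-function $\widetilde\Phi$ with two-sided comparison to $\omega(t)t^p$ \emph{and} with $\Delta_2$, $\nabla_2$ constants depending only on $p$ (crucially not on $\omega$), since Proposition \ref{prop:fefferman_stein} produces a constant depending on $\Delta_2(\Phi)$ and $\nabla_2(\Phi)$. Concavity of $\omega$ is exactly what saves the $\Delta_2$ bound (via $\omega(2t)\leq 2\omega(t)$) and monotonicity of $\omega$ saves the $\nabla_2$ bound; one must be careful that the boundary cases $\omega \equiv 0$ and the behaviour as $t\to\infty$ do not break the $N$-function axioms, handled by treating $\omega\equiv 0$ separately.
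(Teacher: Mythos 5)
Most of your proposal coincides with the paper's own argument: you build a convex replacement $\widetilde\Phi$ for $\Phi(t)=\omega(t)t^p$ in the spirit of Kokilashvili--Krbec (your explicit choice $\widetilde\Phi(t)=p\int_0^t\omega(s)s^{p-1}\,\d s$ works because $\omega(s)s^{p-1}$ is non-decreasing, and your two-sided comparison and the $\Delta_2$, $\nabla_2$ bounds depending only on $p$ are exactly the points the paper checks), you then apply Proposition \ref{prop:fefferman_stein} and pull out $\omega\left(\seminorm{f}_{\BMO(D)}\right)$ via the pointwise bound $\cal M_D^{\#}f\leq\seminorm{f}_{\BMO(D)}$. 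Up to that point the proof is correct and is essentially the paper's.

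The final step, however, contains a genuine error. You still need $\int_D (\cal M_D^{\#}f)^p\,\d x\leq C\int_D\lvert f-(f)_D\rvert^p\,\d x$, and your ``honest route'' asserts $\seminorm{f}_{\BMO(D)}^p\,\cal L^n(D)\leq C\int_D\lvert f-(f)_D\rvert^p\,\d x$, justified by Jensen (and earlier by Proposition \ref{prop:global_JohnNirenberg}). Both go the wrong way: Jensen and the global John--Nirenberg inequality control the $L^p$ oscillation \emph{by} the $\BMO$ seminorm, not conversely, and the converse is false. For instance, take $D$ the unit ball and $f=1_E$ with $E$ a small ball at the centre: then $\seminorm{f}_{\BMO(D)}\geq c(n)>0$ independently of $E$, while $\int_D\lvert f-(f)_D\rvert^p\,\d x\to0$ as $\cal L^n(E)\to0$, so no constant $C(n,p,\delta)$ can work. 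The correct (and the paper's) route is elementary: since $\cal M_D^{\#}$ is unchanged when a constant is subtracted from $f$, one has the pointwise bound $\cal M_D^{\#}f\leq 2\,\cal M\bigl((f-(f)_D)1_D\bigr)$, and the Hardy--Littlewood maximal theorem on $L^p(\bb R^n)$ gives $\int_D(\cal M_D^{\#}f)^p\,\d x\leq C(n,p)\int_D\lvert f-(f)_D\rvert^p\,\d x.$ With this substitution for your last step, the argument closes and matches the paper's proof.
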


\begin{proof}
  We will first construct an $N$-function $\widetilde\Phi$ such that 
  \begin{equation}\label{eq:modified_phi_comparision}
    \widetilde\Phi(t) \leq \Phi(t) \leq \widetilde \Phi(2at)
  \end{equation}
  for all $t \geq 0,$ where $a \geq 1$ to be determined. 
  Since $\omega$ is increasing we have $\Phi(t) \leq \frac1{2a} \Phi(at)$ with $a = 2^{\frac1{p-1}},$ and so by \cite[Lemmas 1.1.1, 1.2.3]{book:KokilashviliKrbec91} we get
  \begin{equation}
    \widetilde\Phi(t) = \frac1a \int_0^{\frac ta} \sup_{0 < \tau < s} \left(\omega(\tau) \tau^{p-1}\right) \,\d s
  \end{equation}
  is convex and increasing on $[0,\infty)$ satisfying \eqref{eq:modified_phi_comparision}.
  Further since $\Phi$ satisfies $\Phi(2t) \leq 2^{p+1}\Phi(t)$ and $\Phi(at) \geq 2a\Phi(t)$ we can infer that $\widetilde\Phi(2t) \leq 2^{p+1}\widetilde\Phi(t)$ and $\widetilde\Phi(at) \geq 2a\Phi(t)$ also, so $\widetilde\Phi \in \Delta_2 \cap \nabla_2$ and the associated constants can be chosen to depend on $p$ only.

  Now applying Proposition \ref{prop:fefferman_stein} to $\widetilde\Phi$ and using \eqref{eq:modified_phi_comparision}, for $f \in L^p(D,\bb R^{Nn})$ we deduce that
  \begin{equation}\label{eq:feffermanstein_cor_int}
    \begin{split}
      \int_D \Phi\left( \lvert f - (f)_D\rvert \right) \,\d x
      &\leq C \int_D \Phi\left( \cal M_D^{\#}f \right) \,\d x\\
      &\leq C\,\omega(\norm{f}_{\BMO(D)}) \int_{\bb R^n} \lvert \cal M(f 1_D)\rvert^p \,\d x,
    \end{split}
  \end{equation} 
  where we have used the fact that $\cal M_D^{\#}f \leq \norm{f}_{\BMO}(D)$ and $\cal M_D^{\#}f \leq \cal M(f 1_D),$ where $\cal M$ is the Hardy-Littlewood maximal operator on $\bb R^n$ defined for $g \in L^1_{\loc}(\bb R^n)$ by
  \begin{equation}
    \cal M(g)(x) = \sup_{B \ni x} \dashint_{B} \lvert f\rvert \,\d y,
  \end{equation} 
  taking the supremum over all balls $B \subset \bb R^n$ containing $x.$
  The Hardy-Littlewood maximal theorem asserts $\cal M$ is bounded on $L^p(\bb R^n)$ for $1<p \leq \infty$ (see for instance \cite[Theorem I.1]{book:stein_differentiability}), so applying this with $g = f\, 1_D$, \eqref{eq:feffermanstein_cor_int} becomes
  \begin{equation}
    \int_D \Phi\left( \lvert f - (f)_D\rvert \right) \,\d x \leq C \omega(\norm{f}_{\BMO(D)}) \int_D \lvert f\rvert^p \d x
  \end{equation} 
  as required.
\end{proof}

\subsection{Localisation near the boundary}\label{sec:boundary_averages}

For the Caccioppoli-type estimate in the interior (Lemma \ref{lem:autonomous_caccioppoli}), our strategy involved testing the equation against $\phi = \eta(u-a)$ with $\eta$ a cutoff and $a$ an affine approximation to $u$ in a ball. This will need to be modified for the boundary case to ensure our test function $\phi$ vanishes on $\partial\Omega.$ In this section we collect the necessary technical ingredients to construct a suitable replacement function, using ideas of \textsc{Kronz} \cite{article:Kronz05} along with the refinements of \textsc{Campos Cordero} \cite{article:CamposCordero17}.

Let $\Omega \subset \bb R^n$ be a bounded $C^{1,\beta}$ domain, that is, $\partial\Omega$ can locally be written as the graph of a $C^{1,\beta}$ function in the following sense; for all $x_0 \in \partial\Omega,$ there is $R_0>0$ and a unit vector $\nu_{x_0} \in \bb R^n$ such that letting $T_{x_0}= \langle \nu_{x_0} \rangle^{\perp}$ denote the orthogonal complement, there is a map
\begin{equation}
  \gamma \colon T_{x_0} \cap B_{R_0} \rightarrow \bb R
\end{equation}
which is of class $C^{1,\beta}$ such that we have $\nabla\gamma(0) = 0$ and
\begin{align}
  \Omega \cap B_{R_0}(x_0) &= B_{R_0}(x_0) \cap \left\{ x_0 + y + \lambda \nu : y \in T_{x_0} \cap B_{R_0}, \lambda < \gamma(y)\right\}, \\
  \partial\Omega \cap B_{R_0}(x_0) &= B_{R_0}(x_0) \cap \left\{x_0+ y + \gamma(y) \nu : y \in T_{x_0} \cap B_{R_0}\right\}.
\end{align}
Note this also allows us to define Lipschitz domains and $C^{k,\beta}$ domains analogously.
In the $C^{1,\beta}$ case, this implies there is an outward facing unit normal $\nu_{\partial\Omega}$ given by $\nu_{\partial\Omega}(x_0) = \nu_{x_0}$ at each $x_0 \in \partial\Omega.$ This also allows us to construct a \emph{defining function} $\rho = \rho_{\Omega} \in C^{1,\beta}(\bb R^n)$ with the property that
\begin{equation}
  \Omega = \{ x \in \bb R^n : \rho(x) < 0\},\quad \bb R^n \setminus \overline\Omega = \{x \in \bb R^n : \rho(x)>0\},
\end{equation}
and such that $\nabla \rho(x) \neq 0$ in $\partial\Omega,$ by locally defining $\rho(x) = \langle (x-x_0), \nu \rangle - \gamma(x-x_0) $ in $B_{R_0}(x_0)$ and patching using a partition of unity. Note that $\nabla \rho(x)$ is normal to $\partial\Omega$ at each $x \in \partial\Omega,$ so we have $\nu_{\partial\Omega}(x) = \frac{\nabla\rho(x)}{\lvert \nabla \rho(x)\rvert}.$ We also define the associated $C^{1,\beta}$-constant of $\Omega$ as
\begin{equation}
  \norm{\Omega}_{C^{1,\beta}} = \inf\left\{\sup_{1 \leq j \leq N} \norm{\nabla\gamma_j}_{C^{0,\beta}(T_{x_j} \cap B_{R_j}(x_j))}\right\},
\end{equation}
where the infimum is taken over collections $\{\gamma_j,x_j,R_j\}_{j=1}^N$ where $\{B_{R_j}(x_j)\}$ covers $\partial\Omega$ and each $\Omega \cap B_{R_j}(x_j)$ is represented as the graph of the $C^{1,\beta}$ function $\gamma_j.$

The idea is to use this defining function $\rho$ as a replacement for the affine approximation, considering maps of the form
\begin{equation}
  a(x) = \xi\, \frac{\rho(x)}{\lvert \nabla \rho(x_0)\rvert},
\end{equation}
with $\xi \in \bb R^N.$ Since $\nabla a = \xi \tensor \frac{\nabla\rho(x)}{\lvert \nabla\rho(x_0\rvert}$ which is close to $\xi \tensor \nu_{x_0}$ however, taking $\xi = (\nabla v \cdot \nu_{x_0})_{\Omega_R(x_0)}$ only allows us to control the normal component compared to the full derivative $\nabla a = (\nabla u)_{B_R(x_0)}$ from the interior case. It turns out this is sufficient however; this is illustrated by the following result, which is an adaptation of an observation of \textsc{Campos Cordero} \cite{article:CamposCordero17}.

\begin{lem}\label{lem:boundary_aconstruction}
  Let $\Omega \subset \bb R^n$ be a bounded $C^{1,\beta}$ domain and let $p > \frac32.$ There is $R_0 > 0$ and $C>0$ such that for all $x_0 \in \partial\Omega$ and $0 < R < R_0,$ for all $v \in W^{1,p}(\Omega_R(x_0),\bb R^N)$ such that $v = 0$ on $\partial\Omega \cap B_R(x_0)$ we have
  \begin{equation}
    \begin{split}
      &\left(\dashint_{\Omega_{R}(x_0)} \lvert \nabla v - (\nabla v \cdot \nu_{x_0})_{\Omega_R(x_0)} \tensor \nu_{x_0}\rvert^p \,\d x\right)^{\frac1p} \\
      &\qquad\leq C \left( \dashint_{\Omega_R(x_0)} \lvert \nabla v - (\nabla v)_{\Omega_R(x_0)}\rvert^p \,\d x\right)^{\frac1p} + C \lvert (\nabla v)_{\Omega_R(x_0)}\rvert R^{\beta}.
    \end{split}
  \end{equation}
\end{lem}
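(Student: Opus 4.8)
The plan is to reduce the estimate to a bound on the \emph{tangential part} of the average gradient, and then extract that bound from the boundary condition $v=0$ on $\partial\Omega\cap B_R(x_0)$ via a trace inequality. Write $\nu=\nu_{x_0}$, $\bar z=(\nabla v)_{\Omega_R(x_0)}$, and let $Pz:=z-(z\nu)\tensor\nu$, so that $|Pz|$ depends only on the $n-1$ columns of $z$ in the directions of $T_{x_0}$. Since $(\nabla v\cdot\nu)_{\Omega_R(x_0)}=\bar z\nu$, the left-hand integrand equals $|(\nabla v-\bar z)+P\bar z|$, and by the triangle inequality in $L^p$ it suffices to prove
\[
  |P\bar z|\leq C\Big(\dashint_{\Omega_R(x_0)}|\nabla v-\bar z|^p\,\d x\Big)^{1/p}+C|\bar z|R^{\beta}=:C\,\mathcal E+C|\bar z|R^\beta.
\]

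First I would fix $R_0$ small so that, for $R<R_0$: the patch $\partial\Omega\cap B_R(x_0)$ is a graph $\{x_0+(y,\gamma(y)):|y|<\rho\}$ over $T_{x_0}$ (identified with $\bb R^{n-1}$, and $\nu$ with $e_n$) with $\rho\in(R/2,R)$, $\gamma(0)=0$, $\nabla\gamma(0)=0$, $|\gamma(y)|\leq CR^{1+\beta}$ and $|\nabla\gamma|\leq CR^\beta$; and each $\Omega_R(x_0)$ is a uniformly Lipschitz domain (and, by Proposition~\ref{prop:local_john}, a $\delta(n)$-John domain), so that the Poincar\'e--Wirtinger and scaled trace inequalities on $\Omega_R(x_0)$ hold with constants depending only on $n,p,\Omega$. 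I would then pick the affine map $\ell(x)=c+\bar z(x-x_0)$ with $c\in\bb R^N$ chosen so that $\phi:=v-\ell$ has $(\phi)_{\Omega_R(x_0)}=0$. Since $\nabla\phi=\nabla v-\bar z$, Poincar\'e--Wirtinger gives $\norm{\phi}_{L^p(\Omega_R(x_0))}\leq CR\norm{\nabla v-\bar z}_{L^p(\Omega_R(x_0))}$, and combined with the trace inequality this yields $\big(\dashint_{\partial\Omega\cap B_R(x_0)}|\phi|^p\,\d\mathcal H^{n-1}\big)^{1/p}\leq CR\,\mathcal E$.

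Next I would use that $v=0$ on the patch to get, with $\bar z'$ the restriction of $\bar z$ to $T_{x_0}$ (so $|\bar z'|=|P\bar z|$), the identity $\phi(x_0+(y,\gamma(y)))=-c-\bar z'y-(\bar z\nu)\gamma(y)$. As the surface measure on the patch is $\sqrt{1+|\nabla\gamma|^2}\,\d y\in[1,1+CR^{2\beta}]\,\d y$ and $\dashint_{\{|y|<\rho\}}y\,\d y=0$, averaging this identity over $\partial\Omega\cap B_R(x_0)$ and rearranging bounds the leftover constant by $|c|\leq CR^{1+2\beta}|P\bar z|+CR\,\mathcal E+C|\bar z|R^{1+\beta}$. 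Then from $|\bar z'y|\leq|\phi|+|c|+C|\bar z|R^{1+\beta}$ on the patch, using $\{|y|<R/2\}\subset\{|y|<\rho\}$ and the elementary rescaled norm-equivalence $\dashint_{\{|y|<R/2\}}|\bar z'y|^p\,\d y\geq c(n,p)R^p|\bar z'|^p$ on the (finite-dimensional) space of linear maps $T_{x_0}\to\bb R^N$, converting the surface integral back to a $\d y$-integral gives
\[
  c(n,p)R^p|P\bar z|^p\leq CR^p\mathcal E^p+C|c|^p+C|\bar z|^pR^{(1+\beta)p}.
\]
Inserting the bound for $|c|^p$, dividing by $R^p$, and shrinking $R_0$ so that the resulting $CR^{2\beta p}|P\bar z|^p$ term is absorbed leaves $|P\bar z|\leq C\mathcal E+C|\bar z|R^\beta$, which together with the reduction is the claim.

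I expect the main obstacle to be producing the genuine gain $R^\beta$ — rather than $O(1)$ — in front of $|\bar z|$: naively subtracting $\bar z(x-x_0)$ and applying the trace inequality produces a term of size $|\bar z|R$ with the wrong homogeneity, so one must centre $\phi$ so that Poincar\'e loses nothing, exploit $\dashint y\,\d y=0$ together with $\nabla\gamma(0)=0$ to see that the leftover constant and the error from the curvature of $\partial\Omega$ are $o(R)$, and then absorb the $R^{2\beta p}|P\bar z|^p$ term by taking $R_0$ small. A secondary point is keeping the Poincar\'e and trace constants uniform over the whole family $\{\Omega_R(x_0)\}$, which is precisely what the uniform John-domain structure of Proposition~\ref{prop:local_john} (together with the $C^{1,\beta}$ regularity of $\partial\Omega$) supplies.
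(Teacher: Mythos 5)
Your argument is correct in substance, but it follows a genuinely different route from the paper. Both proofs start with the same reduction (triangle inequality, so that the point is to bound the tangential part $|P\bar z|$ of the average gradient by $C\mathcal E + C|\bar z|R^{\beta}$), but the paper then uses Campos Cordero's trick: subtract $(\nabla_n v)_{\Omega_R}\,\rho/|\nabla\rho(x_0)|$ so the corrected function still vanishes on the boundary patch, apply the divergence theorem and the fundamental theorem of calculus in the normal direction to convert $\dashint_{\Omega_R}\nabla_i\widetilde v$ into a weighted integral of $\nabla_n\widetilde v$ (the identity \eqref{eq:cordero_divergence}), and then use H\"older with the weight $x_i/(R^2-|x'|^2)^{1/2}$ --- this is precisely where the hypothesis $p>\tfrac32$ enters, since it makes the weight $L^{p'}$-integrable. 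You instead subtract the affine map with matching mean, control its boundary values through a scaled Poincar\'e--Wirtinger plus trace inequality, and recover $|P\bar z|$ from the nondegeneracy of linear maps on the flattened patch, absorbing the $R^{2\beta}|P\bar z|$ error for $R_0$ small. What each approach buys: the paper's proof needs no trace inequality at all (only the divergence theorem and H\"older), which is why it sits comfortably alongside the John-domain machinery of Section \ref{sec:technicalprelims}, but it genuinely uses $p>\tfrac32$; your proof requires a trace inequality with constants uniform over the family $\{\Omega_R(x_0)\}$ --- this needs the uniform Lipschitz (graph) structure rather than just the John property, but is standard after rescaling --- and in exchange works for every $p\geq 1$, so it shows the restriction $p>\tfrac32$ in the statement is an artefact of the paper's method. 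Two small points to tighten: the patch $\partial\Omega\cap B_R(x_0)$ is a graph over a region of $T_{x_0}$ that is only approximately a disc (it is squeezed between discs of radii $R(1-CR^{2\beta})$ and $R$), so $\dashint y\,\d y$ is not exactly zero; the resulting error in your bound for $|c|$ is $O(R^{1+2\beta})|P\bar z|$, i.e.\ of the same order as the Jacobian correction you already carry, so your final estimate is unaffected but the justification should mention it. Likewise the uniformity of the Poincar\'e and trace constants should be stated as coming from the uniformly small Lipschitz constant of the rescaled domains, since the John property alone does not give a trace inequality.
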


\begin{proof}
  Fix $x_0 \in \partial\Omega,$ then by translating and rotating we can assume $x_0=0$ and $\nu(x_0)=e_n,$ and take $R_0>0$ small enough so we can write $\Omega_{R_0}(x_0)$ as the graph of some $\gamma.$ We have
  \begin{equation}\label{eq:camposcordero_firstestimate}
      \left( \dashint_{\Omega_R} \lvert \nabla v - (\nabla_n v)_{\Omega_R} \tensor e_n \rvert^p \,\d x \right)^{\frac1p}  \leq \left( \dashint_{\Omega_R} \lvert \nabla v - (\nabla v)_{\Omega_R}\rvert^p \,\d x \right)^{\frac1p} + \sum_{i=1}^{n-1} \lvert (\nabla_iv)_{\Omega_R}\rvert,
  \end{equation}
   where we write $\nabla_j v = \nabla v \cdot e_j,$ so we need to estimate the tangential derivatives. We proceed analogously to \cite[Lemma 5.6]{article:CamposCordero17} with minor modifications to account for the curved boundary, so letting $\rho$ be the defining function for $\Omega$ as above we consider
   \begin{equation}
     \widetilde v(x) = v(x) - (\nabla_nv)_{\Omega_R} \frac{\rho(x)}{\lvert \nabla \rho(0)\rvert}.
   \end{equation}
   Note that $\widetilde v$ still vanishes on $\partial\Omega \cap B_R,$ so writing $x=(x',x_n) \in \bb R^{n-1} \times \bb R,$ so a similar argument to \cite{article:CamposCordero17} gives
   \begin{equation}\label{eq:cordero_divergence}
      \int_{\Omega_R} \nabla_i \widetilde v \,\d x = \int_{\Omega \cap \partial B_R} \widetilde v(x) \frac{x_i}R \,\d \cal H^{n-1}(x) = \int_{\Omega_R} \nabla_n\widetilde v(x) \frac{x_i}{(R^2-\lvert x'\rvert^2)^{\frac12}} \,\d x,
  \end{equation}
  where the only difference is that $\widetilde v$ vanishes at $(x',\gamma(x'))$ writing $x=(x',x_n).$ This can then be estimated using H\"older's inequality as in \cite{article:CamposCordero17} to get
  \begin{equation}\label{eq:cordero_onederivative}
    \left\lvert\dashint_{\Omega_R} \nabla_i \widetilde v \,\d x\right\rvert \leq \left(\dashint_{\Omega_R} \lvert\nabla_n \widetilde v\rvert^p \,\d x\right)^{\frac1p}.
  \end{equation} 
  Now using the fact that $\rho$ is of class $C^{1,\beta}$ we deduce that
  \begin{equation}
    \begin{split}
      \left\lvert \dashint_{\Omega_R} \nabla_iv \,\d x \right\rvert &\leq  \left\lvert \dashint_{\Omega_R} \nabla_i\widetilde v \,\d x \right\rvert + \lvert (\nabla_nv)_{\Omega_R}\rvert\frac{\lvert (\nabla_i\rho)_{\Omega_R}\rvert}{\lvert \nabla\rho(x_0)\rvert} \\
      &\leq C(n,p) \left(\dashint_{\Omega_R} \lvert \nabla_nv - (\nabla_nv)_{\Omega_R}\rvert^p \,\d x\right)^{\frac1p}+ C(n,p\norm{\Omega}_{C^{1,\beta}})\lvert (\nabla_nv)_{\Omega_R}\rvert R^{\beta},
    \end{split}
  \end{equation}
  where we used \eqref{eq:cordero_onederivative} in the second line. 
  Thus combining with \eqref{eq:camposcordero_firstestimate} the result follows.
\end{proof}

We close this subsection with the proof of Proposition \ref{prop:local_john}, which will be an consequence of the following more general result.

\begin{lem}\label{lem:lipschitz_localjohn}
  Let $\Omega$ be a Lipschitz-domain with $\norm{\Omega}_{C^{0,1}} <1.$ Then for all $x_0 \in \overline\Omega$ and $0<R<R_0,$ we have $\Omega_R(x_0) = \Omega \cap B_R(x_0)$ is a $\delta$-John domain, where $\delta$ can be chosen to depend on $n$ and $\norm{\Omega}_{C^{0,1}}$ only.
\end{lem}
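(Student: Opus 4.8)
The plan is to prove that a Lipschitz domain intersected with a small ball is a John domain, with the John constant controlled only by the dimension and the Lipschitz constant of $\partial\Omega$. The basic geometric picture is that after localising, $\Omega_R(x_0)$ looks like the region below the graph of a Lipschitz function $\gamma$ with small Lipschitz constant, intersected with a ball; such a region should clearly satisfy a twisted cone condition. I will distinguish the cases $x_0 \in \Omega$ far from the boundary (where $B_R(x_0) \subset \Omega$ for $R$ small and the ball is trivially a John domain with $\delta = 1$), $x_0 \in \overline\Omega$ near $\partial\Omega$ but with $B_R(x_0)$ still not meeting $\partial\Omega$ (again a ball), and the genuinely boundary-adjacent case, which is the one requiring work.

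In the boundary case, first I would choose $R_0$ small enough (depending on the radii in the $C^{0,1}$ atlas of $\Omega$) that for any $x_0$ with $\dist(x_0,\partial\Omega) < R_0$ and any $R < R_0$, the set $\Omega_R(x_0)$ is contained in a single chart, where after translating and rotating we may write it as $\{(y,\lambda) : \lambda < \gamma(y)\} \cap B_R(x_0)$ with $\Lip(\gamma) = L = \norm{\Omega}_{C^{0,1}} < 1$. I would then pick the John centre $z_0$ to be a point ``deep inside'' $\Omega_R(x_0)$ in the $-e_n$ direction, say roughly at distance of order $cR$ below the lowest point of the graph over the relevant projected region but still inside $B_{R}(x_0)$ — concretely at the point on the segment from the ball centre pointing away from the boundary, at a controlled fraction of $R$. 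The key estimate is that $\dist(z_0,\partial \Omega_R(x_0))$ is bounded below by $c(n,L) R$: its distance to the graph is controlled since $L<1$ forces the graph to stay in a cone, and its distance to the spherical part $\partial B_R(x_0)$ is a fixed fraction of $R$ by construction.

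Then for an arbitrary $x \in \Omega_R(x_0)$ I would construct the rectifiable curve $\gamma$ from $x$ to $z_0$ explicitly in two (or three) segments: first move vertically downward in the $-e_n$ direction until reaching the ``core'' region at a fixed depth, then move horizontally/radially to align with $z_0$, then downward again if needed. Along the downward segment the distance to $\partial\Omega$ grows linearly because the Lipschitz graph lies in a cone of aperture controlled by $L$, and being inside $B_R$ is preserved since we move toward the centre; along the core segment the distance to the whole boundary $\partial\Omega_R(x_0)$ stays bounded below by a fixed multiple of $R$, which dominates $\delta t$ for $t$ comparable to $R$. Parametrising by arclength and patching, one checks $\dist(\gamma(t),\partial\Omega_R(x_0)) \geq \delta t$ with $\delta = \delta(n,L)$. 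Since bounded Lipschitz domains with $\norm{\Omega}_{C^{0,1}}<1$ have a uniform chart radius, $R_0$ and $\delta$ are uniform over all $x_0 \in \overline\Omega$.

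The main obstacle is the careful bookkeeping in the boundary-adjacent case: ensuring that the downward/radial curve simultaneously stays inside the ball $B_R(x_0)$ \emph{and} below the graph $\gamma$, while the lower bound $\dist(\gamma(t),\partial\Omega_R(x_0)) \geq \delta t$ holds uniformly in the arclength parameter — the two parts of the boundary (graph and sphere) interact near their intersection and one must check the cone estimate survives there. The hypothesis $L = \norm{\Omega}_{C^{0,1}} < 1$ is exactly what guarantees the vertical direction $-e_n$ always points ``into'' $\Omega$ with a definite opening angle, so that moving downward is a legitimate John curve direction; without it one would need to rotate the preferred direction chart-by-chart. Once this is set up, extracting the explicit dependence $\delta = \delta(n, \norm{\Omega}_{C^{0,1}})$ is routine. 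Proposition \ref{prop:local_john} then follows as the special case of $C^1$ domains, where one may rescale so that $\norm{\Omega}_{C^{0,1}}$ is as small as desired (in particular $<1$) after shrinking $R_0$, yielding $\delta = \delta(n)$.
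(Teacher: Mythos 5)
Your overall strategy is the same as the paper's (localise to a single Lipschitz chart, put the John centre on the interior normal axis at depth comparable to $R$, build explicit piecewise-linear John curves, and use $\norm{\Omega}_{C^{0,1}}<1$ to guarantee that the normal direction is uniformly interior), but the way you propose to handle the spherical part of $\partial\Omega_R(x_0)$ has a genuine gap, and it is exactly the point you yourself flag as the main obstacle. The claim that along the vertical segment ``being inside $B_R$ is preserved since we move toward the centre'' is false: motion in the \emph{fixed} direction $-e_n$ (equivalently $+e_n$ in the paper's orientation) is not motion toward the ball centre. Since the graph passes through the centre $x_0$ with slope at most $L<1$, it meets the sphere $\partial B_R(x_0)$, and for a point $x$ close to both the graph and the sphere near this intersection the interior vertical direction is nearly tangent to the sphere; the vertical segment then leaves $B_R(x_0)$ after arclength comparable to $\dist(x,\partial B_R(x_0))$, so the required bound $\dist(\gamma(t),\partial\Omega_R(x_0))\geq \delta t$ fails there. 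The same problem undermines your ``core'' segment: the set of points at a fixed depth below the graph contains points arbitrarily close to the spherical boundary, so its distance to the \emph{whole} boundary of $\Omega_R(x_0)$ is not bounded below by a fixed multiple of $R$, and the horizontal/radial piece needs its own linear-growth estimate which you have not supplied.

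For comparison, the paper resolves precisely this graph--sphere interaction by choosing, for $x$ in the bad region near the boundary cone, a slanted direction of travel adapted to the position of $x$ (rotating $x$ toward the interior normal axis), and verifying two things simultaneously: by an elementary cosine-rule computation the distance to $\partial B_R(x_0)$ grows linearly in arclength, and because the direction lies in the open cone $C(e_n,\tfrac{\pi}{2}-\theta_L)$ furnished by $L<1$, the distance to the graph grows linearly as well. The remaining region $B^{+}\setminus S_L$ (points making angle at least $\tfrac{\pi}{4}$ with the tangent plane) is convex, so it is a John domain with centre $\tfrac12 e_n$ by the Martio--Sarvas observation, and the two pieces of the curve are concatenated. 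If you want to keep your vertical-then-radial construction you would have to case-split further (e.g.\ move radially inward while within distance $cR$ of the sphere, vertically only while the distance to the sphere is comparable to $R$) and prove the linear lower bound across the switch; as written, the two justifications you give for the spherical part do not hold, so the key estimate is unproven. Your final remark, that Proposition \ref{prop:local_john} follows for $C^{1}$ domains because the local Lipschitz constant can be made smaller than $1$ after shrinking $R_0$ and rotating charts, is consistent with how the paper deduces that result.
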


\begin{proof}
  Put $L := \norm{\Omega}_{C^{0,1}} < 1.$
  Let $R_0>0$ such that $\Omega_R(x_0)$ can be written as the graph of a Lipschitz function $\gamma$ when $R<R_0.$
  By means of a rigid motion assume that $x_0 =0,$ $\nu(x_0) = -e_n$ and $T_{x_0} = H = \{x \in \bb R^n : x_n = 0\}.$
  Moreover by rescaling we can assume that $R=1,$ so we have
  \begin{equation}
    \Omega \cap B = \left\{ x \in B :  x_n > \gamma(x')\right\}. 
  \end{equation} 
  By assumption we have $\lvert \nabla\gamma\rvert \leq L$ a.e.\,in $H \cap B$ and $\gamma(0)=0$, which implies that $\lvert \gamma(x')\rvert \leq L \lvert x'\rvert.$ 
  Therefore noting $x_n = \gamma(x')$ on $\partial\Omega$ we have
  \begin{equation}\label{eq:coneunion_L}
    \partial\Omega \cap B \subset \left\{ x \in B : \lvert x'\rvert \geq \frac{\lvert x\rvert}{\sqrt{L^2+1}} \right\} =: S_{L}.
  \end{equation} 
  Moreover $S_L$ can be seen as the union of all cones 
  \begin{equation}
    C(n,\theta_L) := \left\{ x \in \bb R^n : \lvert x \cdot n\rvert \geq \lvert x \rvert\cos \theta_L \right\}
  \end{equation} 
  intersected with $B$ for all $n \in S^{n-2} \times \{0\},$ where $\cos \theta_L = 1/\sqrt{L^2+1}.$
  Note that $\theta_L < \frac{\pi}4$ if and only if $L < 1.$
  We will also let $S_1$ to be as in \eqref{eq:coneunion_L} where $L$ is replaced by $1$.
  Also since $\Omega \cap B \supset B^+ \setminus S_L$ where $B^+ = \{ x \in B : x_n > 0\},$ we will choose $y_0 = \frac12 e_n$ in $B^+ \setminus S_L$ to be our John centre.
  Since $B^+ \setminus S_L$ is convex, it is shown by \textsc{Martio \& Sarvas} \cite[Remark 2.4(c)]{article:MartioSarvas79} that it is a John domain with constant $\frac1{2\sqrt{2}}$, since $B_{\frac1{2\sqrt{2}}}(y_0) \subset B^+ \setminus S_1 \subset B^+ \setminus S_L \subset B.$

  Now let $x=(x',x_n) \in \Omega \cap S$, noting that $x' \neq 0$ necessarily.
  We wish to construct a piecewise linear path from $x$ to $y_0$ verifying the John domain assumption, as drawn in Figure \ref{fig:john_lipschitz}, which will involve some elementary geometry.

\begin{figure}[ht]
    \centering
    \begin{tikzpicture}[scale=.75]
	\begin{pgfonlayer}{nodelayer}
		\node [circle,fill,minimum size = 3pt,inner sep = 0pt, outer sep = 0pt] (0) at (0, 0) {};
		\node [style=none] (1) at (0, 9) {};
		\node [style=none] (2) at (10, 0) {};
		\node [style=none] (3) at (0, 8) {};
		\node [style=none] (4) at (9, 0) {};
		\node [style=none] (6) at (7, 7) {};
		\node [style=none] (9) at (9, 4.5) {};
		\node [style=none] (10) at (4, 0.75) {};
		\node [style=none] (11) at (5.75, 0.25) {};
		\node [style=none] (12) at (7.25, 0.75) {};
		\node [style=none] (13) at (9.5, 0.5) {};
		\node [style=none] (14) at (1, 1) {};
		\node [style=none] (15) at (2, 1) {};
		\node [style=none] (16) at (1.5, 0) {};
		\node [style=none] (20) at (2.25, 0) {};
		\node [style=none] (21) at (0.5, 9) {$x_n$};
		\node [style=none] (22) at (7.5, 7) {};
		\node [style=none] (23) at (7.5, 7) {$\partial S_1$};
		\node [style=none] (24) at (9.5, 4.5) {$\partial S_L$};
		\node [style=none] (25) at (9.9, 0.5) {$\partial\Omega$};
		\node [circle,fill=white,minimum size = 3pt,inner sep = 0pt, outer sep = 0pt] (28) at (1.7, 0.75) {$\frac{\pi}4$};
		\node [style=none] (29) at (2.5, 0.75) {$\theta_L$};
		\node [circle,fill,minimum size = 3pt,inner sep = 0pt, outer sep = 0pt] (30) at (8, 1) {};
		\node [style=none] (31) at (8.3, 1) {$x$};
		\node [style=none] (32) at (4.5, 4.5) {};
		\node [circle,fill,minimum size = 3pt,inner sep = 0pt, outer sep = 0pt] (34) at (0, 5) {};
		\node [style=none] (35) at (0.35, 5.25) {$y_0$};
		\node [style=none] (36) at (4, 7.7) {$B$};
		\node [style=none] (37) at (10.5, 0) {};
		\node [style=none] (38) at (10.4, 0) {$x'$};
		\node [style=none] (40) at (6.25, 2.75) {};
		\node [style=none] (41) at (2.25, 4.75) {};	\end{pgfonlayer}
		\node [style=none] (44) at (3.75, 4.9) {$\gamma$};
	\begin{pgfonlayer}{edgelayer}
		\draw [thick,->](0.center) to (1.center);
		\draw [thick,->](0.center) to (2.center);
		\draw [thick,bend left=45] (3.center) to (4.center);
    \draw [dashed](0.center) to (6.center);
		\draw [dashed](0.center) to (9.center);
    \draw [thick](0.center) to (10.center);
		\draw [thick,bend left, looseness=0.75] (10.center) to (11.center);
		\draw [thick,bend right] (11.center) to (12.center);
		\draw [thick,bend left=15] (12.center) to (13.center);
		\draw [bend left, looseness=0.75] (14.center) to (16.center);
		\draw [bend left=15] (15.center) to (20.center);
		\draw (30.center) to (32.center);
    \draw (32.center) to (34.center);
		\draw [-{Stealth[scale=2]}](32.center) to (41.center);
		\draw [-{Stealth[scale=2]}](30.center) to (40.center);
	\end{pgfonlayer}
\end{tikzpicture}
    \caption{Construction of the path $\gamma$}
   \label{fig:john_lipschitz}
\end{figure}
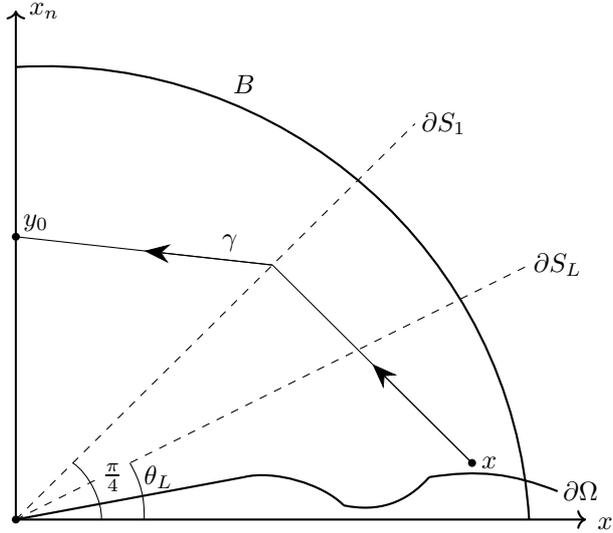

  Let $\omega = \frac{(x',0)}{\lvert x'\rvert}$ and put $x_t = x + \frac{t}{\lvert x\rvert} (-x_n \omega + \lvert x'\rvert e_n),$ which is parametrised by arclength.
  We also let $\theta_t \in (0,2\pi)$ such that $x_t = \omega \cos \theta_t + e_n \sin \theta_t$; note that $\theta_0 \in (\theta_L, \frac{\pi}4)$.
  We now claim that $\lvert x_t\rvert = \dist(x, \partial B_1)$ is linearly decreasing in $t$ provided $x_t \in S_1.$
  To see this, consider the triangle formed by the points $P = 0,$ $Q = x_0$ and $R = x_t;$ then we have the angles $\angle RPQ = \pi - (\frac{\pi}4 + \theta_t)$ and $\angle PQR = \theta_L + \frac{\pi}4$.
  Then $\lvert Q - P\rvert = \lvert x_0\rvert \leq 1$, $\lvert R - Q \rvert = \lvert x_t\rvert$, $\lvert R-Q\rvert = t$, and $\angle PQR = \frac{\pi}4 - \theta_0$.
  By the cosine rule we have
  \begin{equation}
    \lvert x_t\rvert^2= \lvert x\rvert^2 + t^2 - 2 t \cos\left( \frac{\pi}4 - \theta_0 \right) := p(t).
  \end{equation} 
  Let $t_0>0$ be the unique value such that $\theta_{t_0} = \frac{\pi}4$, which is where $x_t$ exists $S_1.$
  In this case, since $\angle QRP = \frac{\pi}2$ we have $t_0 = \lvert x\rvert \sin(\frac{\pi}4-\theta_0).$ 
  Note also that $\lvert x_{t_0}\rvert = \lvert x\rvert \cos(\frac{\pi}4-\theta_0)$.
  Therefore for $t \in (0,t_0)$ we have
  \begin{equation}
    p'(t) = 2 (t - \lvert x\rvert \cos\left( \frac{\pi}4-\theta_0 \right) \geq 2\lvert x\rvert \left(\cos\left( \frac{\pi}4-\theta_0 \right)-t_0\right) = - 2 \sqrt{2} \lvert x\rvert \sin(\theta_0).
  \end{equation} 
  Hence we deduce that
  \begin{equation}
    \lvert x \rvert - \lvert x_t\rvert = -\frac12 \int_0^t \frac{p'(s)}{\sqrt{p(s)}}\,\d s \geq \sqrt{2} \sin(\theta_0) \frac{\lvert x\rvert}{\lvert x_{t_0}\rvert} \geq \delta t := \frac{\sqrt{2} \sin(\theta_L)}{\cos\left(\frac{\pi}4-\theta_L\right)} t,
  \end{equation} 
  so it follows that
  \begin{equation}
    \dist(x_t,\partial B) = 1 - \lvert x_t\rvert \geq \delta t.
  \end{equation} 
  Also since $\gamma$ is $L$-Lipschitz, we have
  \begin{equation}
    \left(x + C\left(e_n,\frac{\pi}2 - \theta_L\right)\right) \cap B \subset \Omega \cap B.
  \end{equation} 
  Indeed if $y \in C(e_n,\frac{\pi}2-\theta_L)$ then $y_n > L\lvert y'\rvert$ and hence
  \begin{equation}
    x_n + y_n > \gamma(x') - L \lvert y'\rvert \geq \gamma(x'+y'),
  \end{equation} 
  so $x + y \in \Omega$ provided $\lvert x+y\rvert \leq 1$.
  Since $x_t$ lies in the cone $x + C(e_n,\frac{\pi}2)$, some more trigonometry gives
  \begin{equation}
    \dist(x_t,\partial\Omega) \geq \dist\left(x_t, x + \partial C\left(e_n,\frac{\pi}2-\theta_L\right)\right) = \lvert x_t - x\rvert \sin\left(\frac{\pi}4-\theta_L\right) = \sigma t,
  \end{equation} 
  where $\sigma = \sin\left( \frac{\pi}4-\theta_L \right).$
  Combining the above two estimates we deduce that
  \begin{equation}
    \dist(x_t, \partial (\Omega \cap B)) \geq \min\{\delta,\sigma\} t
  \end{equation} 
  for all $0<t<t_0.$
  We can then join $x_{t_0}$ to the John centre $y_0$ via a linear combination to conclude, which is also how the case $x \in \Omega \setminus S_L$ is treated.
\end{proof}

\subsection{Reference estimates up to the boundary}

We will also need some reference estimates for linear elliptic systems for the harmonic approximation step. We consider a linear mapping $\bb A: \bb R^{Nn} \to \bb R^{Nn}$ which is symmetric in the sense that $v : \bb A w = \bb Av : w,$ satisfying the uniform Legendre-Hadamard ellipticity condition
\begin{equation}
  \lambda \lvert \xi\rvert^2\lvert \eta\rvert^2 \leq \bb A(\xi \tensor \eta) : (\xi \tensor \eta) \leq \Lambda\lvert \xi\rvert^2\lvert \eta\rvert^2
\end{equation}
holds for all $\xi \in \bb R^N,$ $\eta \in \bb R^n$ with $\lambda>0.$ By means of the Fourier transform one can infer that for any $\Omega \subset \bb R^n$ open the estimate
\begin{equation}\label{eq:LH_coercivity}
  \int_{\Omega} \lvert \nabla \varphi\rvert^2 \,\d x \leq \frac1{\lambda}\int_{\Omega} \bb A \nabla \varphi : \nabla \varphi \,\d x
\end{equation}
holds for all $\varphi \in W^{1,2}_0(\Omega,\bb R^N),$ so the Lax-Milgram lemma gives the associated operator $-\div(\bb A \nabla\cdot) : W^{1,2}_0(\Omega,\bb R^N) \to W^{-1,2}(\Omega,\bb R^N)$ is an isomorphism.

In the interior case we considered the same setting, but we used uniform and $W^{2,2}$ estimates which could be found in many sources such a \cite{book:Giusti03}. For boundary regularity we wish to establish analogous estimates for $\Omega_R(x_0),$ however such domains are merely piecewise $C^{1,\beta}$ which is too weak to expect estimates in those scales. To circumvent this we will need to replace $\Omega_R$ by a suitably regular domain following an argument used by \textsc{Kristensen \& Mingione} in \cite{article:Kristensen_Mingione10}, and obtain weakened estimates which will be sufficient for our purposes.

\begin{lem}\label{lem:boundary_linearelliptic}
  Let $\Omega \subset \bb R^n$ be a bounded $C^{1,\beta}$ domain and let $\bb A$ be symmetric and uniformly Legendre-Hadamard elliptic as above. Then there is $R_0>0$ such that for each $x_0 \in \partial\Omega$ and $0 < R < R_0,$ there exists a $C^{1,\beta}$ domain $\widetilde\Omega_R(x_0)=\widetilde\Omega_R$ such that
  \begin{equation}
    \overline{\Omega_{R/2}(x_0)} \subset \widetilde\Omega_R \subset \Omega_{R}(x_0),
  \end{equation}
  on which the following solvability results hold.

  \begin{enumerate}[label=(\roman*)]

    \item If $v \in W^{1,2}(\widetilde\Omega_R(x_0))$ such that $v=0$ on $\partial\Omega \cap \partial\widetilde\Omega_R(x_0),$ the unique $h \in W^{1,2}_v(\widetilde\Omega_R(x_0))$ solving
      \begin{equation}
        \pdeproblem{-\div(\bb A\nabla h)}{0}{\widetilde\Omega_R(x_0),}{h}{v}{\partial\widetilde\Omega_R(x_0),}
      \end{equation}
      is of class $C^{1,\beta}$ in $\widetilde\Omega_R \cup \left(\partial\Omega \cap \partial\widetilde\Omega_R(x_0)\right)$ with the associated estimate
      \begin{equation}\label{eq:boundary_schauder}
        \seminorm{\nabla h}_{C^{1,\beta}(\overline{\Omega_{R/2}(x_0))}} \leq C(n,N,\Lambda/\lambda,\beta,\norm{\Omega}_{C^{1,\beta}}) \left(\dashint_{\widetilde\Omega_R} \lvert \nabla h\rvert^2 \,\d x\right)^{\frac12}.
      \end{equation}
      
    \item If $2 \leq p<\infty$ and $F \in L^p(\Omega,\bb R^{nN}),$ then there is a unique $u \in W^{1,p}_0(\Omega,\bb R^N)$ solving
      \begin{equation}
        \pdeproblem{-\div(\bb A\nabla u)}{-\div F}{\widetilde\Omega_R(x_0),}{u}{0}{\partial\widetilde\Omega_R(x_0),}
      \end{equation}
      which satisfies the estimate
      \begin{equation}\label{eq:boundary_lp}
        \int_{\widetilde\Omega_R(x_0)} \lvert \nabla u\rvert^p \,\d x \leq C(n,N,p,\Lambda/\lambda,\norm{\Omega}_{C^{1,\beta}}) \int_{\widetilde\Omega_R(x_0)} \lvert F\rvert^p \,\d x.
      \end{equation}
  \end{enumerate}
\end{lem}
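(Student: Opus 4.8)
The plan is to reduce both assertions to classical constant-coefficient elliptic estimates on a genuinely $C^{1,\beta}$ domain, with the only real work being the construction of $\widetilde\Omega_R(x_0)$. First I would fix, once and for all, a bounded $C^{\infty}$ model domain $U$ with $\{x_n<0\}\cap B_{4/5}(0)\subset U\subset\{x_n<0\}\cap B_{19/20}(0)$ and $\{(x',0):|x'|\le 9/10\}\subset\partial U$; such a $U$ (a smoothly rounded-off half-ball) is elementary to produce. Since $\partial\Omega$ is of class $C^{1,\beta}$, near each $x_0\in\partial\Omega$ there is, after rotating $\nu_{x_0}$ to $e_n$, a $C^{1,\beta}$-diffeomorphism $\Psi_{x_0}$ from a neighbourhood of $x_0$ onto a neighbourhood of the origin which flattens $\partial\Omega$ to $\{x_n=0\}$ and maps $\Omega$ to $\{x_n<0\}$, with $\Psi_{x_0}(x_0)=0$, $\D\Psi_{x_0}(x_0)=\Id$, and $C^{1,\beta}$-norms of $\Psi_{x_0}^{\pm1}$ controlled by $n$ and $\norm{\Omega}_{C^{1,\beta}}$ (obtained by writing $\partial\Omega$ as a graph and subtracting it). I then set $\widetilde\Omega_R(x_0):=\Psi_{x_0}^{-1}(RU)$, which is defined for $0<R<R_0$ with $R_0$ depending only on $\norm{\Omega}_{C^{1,\beta}}$ and $n$. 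The key observation is that the rescaled map $y\mapsto R^{-1}\big(\Psi_{x_0}^{-1}(Ry)-x_0\big)$ converges to the identity in $C^{1,\beta}$ as $R\to0$ — this is precisely where $\D\Psi_{x_0}^{-1}(0)=\Id$ and the $\beta$-Hölder continuity of $\D\Psi_{x_0}^{-1}$ enter — so $\widetilde\Omega_R(x_0)$ is a $C^{1,\beta}$ domain whose character, after translation and dilation to unit scale, is bounded uniformly in $x_0$ and in $0<R<R_0$ by $n$ and $\norm{\Omega}_{C^{1,\beta}}$. Using $B_{4/5}^{-}\subset U\subset B_{19/20}^{-}$, the position of the flat disk in $\partial U$, and the near-identity property, one checks for $R_0$ small that $\overline{\Omega_{R/2}(x_0)}\cap\Omega\subset\widetilde\Omega_R(x_0)\subset\Omega_R(x_0)$, that $\overline{\Omega_{R/2}(x_0)}\cap\partial\Omega\subset\partial\Omega\cap\partial\widetilde\Omega_R(x_0)$, and that the part of $\partial\widetilde\Omega_R(x_0)$ lying inside $\Omega$ stays outside $\overline{B_{3R/4}(x_0)}$; in particular $\overline{\Omega_{R/2}(x_0)}$ is a compact subset of $\widetilde\Omega_R(x_0)\cup(\partial\Omega\cap\partial\widetilde\Omega_R(x_0))$, at distance comparable to $R$ from the rest of $\partial\widetilde\Omega_R(x_0)$. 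This transport of a fixed smooth model is the point of the argument of \textsc{Kristensen \& Mingione} \cite{article:Kristensen_Mingione10}.

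For (i), I would obtain the solution $h$ from \eqref{eq:LH_coercivity} and the Lax--Milgram lemma applied to $h-v\in W^{1,2}_0(\widetilde\Omega_R)$. Writing $\Gamma:=\partial\Omega\cap\partial\widetilde\Omega_R$, which is a relatively open $C^{1,\beta}$ piece of $\partial\widetilde\Omega_R$ on which $h$ has vanishing Dirichlet data, the classical Schauder theory for constant-coefficient Legendre--Hadamard systems — interior estimates as in \cite[Ch.~10]{book:Giusti03} together with their boundary counterpart on the $C^{1,\beta}$ portion $\Gamma$, proved by flattening $\Gamma$ and freezing coefficients — gives $\nabla h\in C^{0,\beta}_{\loc}(\widetilde\Omega_R\cup\Gamma)$. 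Since $\overline{\Omega_{R/2}(x_0)}$ is compact in $\widetilde\Omega_R\cup\Gamma$ and sits at distance comparable to $R$ from $\partial\widetilde\Omega_R\setminus\Gamma$, a covering argument combined with rescaling $\widetilde\Omega_R$ to unit size yields $\seminorm{\nabla h}_{C^{0,\beta}(\overline{\Omega_{R/2}(x_0)})}\le C\,R^{-(1+\beta)}\big(\dashint_{\widetilde\Omega_R}|h-(h)_{\widetilde\Omega_R}|^{2}\,\d x\big)^{1/2}$, with $C$ depending only on $n,N,\Lambda/\lambda,\beta$ and the ($R$-independent) character of $\widetilde\Omega_R$. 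Finally, since $h-v\in W^{1,2}_0(\widetilde\Omega_R)$ and $v=0$ on $\Gamma$, the Poincar\'e inequality on the John domain $\widetilde\Omega_R$ (Proposition \ref{prop:global_JohnNirenberg} with $p=2$) replaces the right-hand side by $C\big(\dashint_{\widetilde\Omega_R}|\nabla h|^{2}\,\d x\big)^{1/2}$, which is \eqref{eq:boundary_schauder}.

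For (ii), I would invoke the Calder\'on--Zygmund estimate for $-\div(\bb A\nabla\cdot)$ with homogeneous Dirichlet data on the $C^{1,\beta}$ domain $\widetilde\Omega_R$ (for which $C^1$ regularity of the boundary already suffices). Existence and the case $p=2$ are \eqref{eq:LH_coercivity}; for general $2\le p<\infty$ one combines the full-space $L^p$ multiplier bound for $-\div(\bb A\nabla\cdot)$ — available since Legendre--Hadamard ellipticity makes $\xi\mapsto\bb A(\xi\tensor\cdot)\xi$ invertible for $\xi\neq0$ — with the standard localisation across $\partial\widetilde\Omega_R$ after flattening the boundary and freezing coefficients. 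The resulting bound $\int_{\widetilde\Omega_R}|\nabla u|^{p}\,\d x\le C\int_{\widetilde\Omega_R}|F|^{p}\,\d x$ is scale-invariant, so $C$ is independent of $R$ and depends only on $n,N,p,\Lambda/\lambda$ and $\norm{\Omega}_{C^{1,\beta}}$, which is \eqref{eq:boundary_lp}.

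The main obstacle I anticipate is the construction in the first paragraph: one must ``round off'' the corner of $\Omega_R(x_0)$ along $\partial B_R(x_0)\cap\partial\Omega$ into a $C^{1,\beta}$ hypersurface while simultaneously keeping $\widetilde\Omega_R(x_0)$ sandwiched between $\Omega_{R/2}(x_0)$ and $\Omega_R(x_0)$ \emph{and} keeping its $C^{1,\beta}$ character bounded independently of $R$ as $R\to0$. Transporting a single fixed smooth model domain by the flattening diffeomorphism — which is near-identity at small scales — is what makes these three requirements compatible; once this is in place, parts (i) and (ii) are routine constant-coefficient Schauder and $L^p$ estimates combined with covering and rescaling, the only care being to track the dependence of all constants on the domain solely through its $C^{1,\beta}$ character.
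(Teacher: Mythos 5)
Your construction of $\widetilde\Omega_R(x_0)$ by transporting a fixed smooth model domain through the boundary-flattening diffeomorphism (near-identity at small scales) is exactly the paper's argument following \textsc{Kristensen \& Mingione}, and parts (i)--(ii) are likewise reduced to classical Schauder and Calder\'on--Zygmund estimates, so this is essentially the same proof. The only, cosmetic, difference is that the paper pushes the operator forward to the smooth model domain, turning $\bb A$ into $\beta$-H\"older continuous variable coefficients there, whereas you keep $\bb A$ constant and invoke boundary Schauder/$L^p$ theory directly on the $C^{1,\beta}$ image domain --- which under the hood is the same flattening-and-freezing step.
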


\begin{proof}
  Fix a smooth domain $A \subset \bb R^n$ such that $\overline B_{\frac56}(0)^+ \subset A \subset B_1(0)^+.$ Using the graph representation above we can construct a diffeomorphism $\psi \colon B_{R_0}(x_0) \to U \subset B_1(0)$ such that $A \subset U,$ $\psi(B_{R_0} \cap \Omega) = U \cap \bb R^n_+,$ and such that $D\psi(x_0)$ is orthogonal. Hence by shrinking $R_0$ if necessary we can assume that
  \begin{equation}
    B_{\frac{5R}{6R_0}}(0) \subset \psi(B_R(x_0)) \subset B_{\frac{6R}{5R_0}}
  \end{equation}
for all $R \in (0,R_0).$ Hence if we let $\widetilde\Omega_R = \psi^{-1}\left(\frac{18R}{25R_0}A\right)$ this satisfies,
  \begin{equation}
    \overline{\Omega_{R/2}} \subset \psi^{-1}\left(\overline B_{\frac{3R}{5R_0}}(0)^+\right) \subset \widetilde\Omega_R \subset \psi^{-1}\left(B_{\frac{18R}{25R_0}}(0)^+\right) \subset \Omega_{R},
  \end{equation}
  as claimed. Now if $\varphi \in W^{1,2}(\widetilde\Omega_R,\bb R^N),$ setting $\widetilde\varphi = \varphi \circ \psi^{-1}$ we have for $\psi(y)=x$ that
  \begin{equation}
    -\div(\bb A \nabla \varphi) = - \div(\widetilde{\bb A} \nabla\widetilde\varphi)
  \end{equation}
  where we define
  \begin{equation}
    \widetilde{\bb A}(y)v : w = \lvert \det(\nabla\psi(y))\rvert^{-1}\, \bb A (\nabla\psi(x)v) : (\nabla \psi(x)w)
  \end{equation}
  for $y = \psi(x)$ and all $v,w \in \bb R^{Nn}.$ We can check $\widetilde{\bb A}$ is Legendre-Hadamard elliptic and $\beta$-H\"older continuous with constants depending on $n, \lambda, \Lambda$ and $\norm{\Omega}_{C^{1,\beta}},$ noting $\nabla\psi \in C^{0,\beta}$ with bounded inverse. Hence (i) and (ii) follow by analogous estimates on $A_R := \frac{18R}{25R_0} A$ applying the classical Schauder and Calder\'on-Zygmund estimates respectively; see for instance Theorems $10.12,$ $10.17$ in \cite{book:Giusti03} for details.
\end{proof}

\begin{rem}\label{rem:negative_sobolev_embedding}
  The second estimate (ii) replaces $W^{2,2}$ estimates by weaker bounds in $W^{1,p},$ which suffices for our application. We will apply this with $f \in L^2(\widetilde\Omega_R(x_0),\bb R^N)$ by using the Newtonian potential to define
\begin{equation}
  F = \frac{-1}{n\omega_n}\int_{\widetilde\Omega_R(x_0)} f(y) \frac{x-y}{\lvert x-y\rvert^{n}} \,\d x,
\end{equation}
which satisfies $-\div F = f\chi_{\widetilde\Omega_R(x_0)}$ in $\bb R^n.$ By standard potential estimates (see for instance Lemmas $7.12$, $7.14,$ and Theorem $9.9$ in \cite{book:GilbargTrudinger98}) we have $C=C(n,p)$ such that
\begin{equation}
  \norm{F}_{L^p(\widetilde\Omega_R(x_0))} \leq C \cal L^n\left(\widetilde\Omega_R(x_0)\right)^{\frac1n + \frac1p - \frac12} \norm{f}_{L^2(\widetilde\Omega_R(x_0))},
\end{equation}
provided $\frac12-\frac1p \leq \frac1n$ with $1 \leq p<\infty,$ which puts us in the setting of the above lemma.
\end{rem}

Finally we conclude by stating a Poincar\'e inequality we will use extensively later. For the case of the modified domain, this follows by flattening the boundary and rescaling the smooth domain $A$, whereas in $\Omega_R$ we can extend by zero to $B_R(x_0)$ and apply the corresponding inequality there.

\begin{lem}[Poincar\'e inequality]\label{lem:scaled_poincare}
  Let $\Omega \subset \bb R^n$ be a bounded $C^{1,\beta}$ domain and let $R_0>0,$ $\widetilde\Omega_R(x_0)$ as in Lemma \ref{lem:boundary_linearelliptic} above. Then for all $x_0 \in \partial\Omega,$ $0<R<R_0,$ $1<p<\infty,$ for $u \in W^{1,p}(\widetilde\Omega_R(x_0),\bb R^N)$ such that $u = 0$ on $\partial\Omega \cap B_R(x_0)$ in the trace sense we have
  \begin{equation}
    R^{\frac np-\frac nq - 1} \norm{u}_{L^q(\widetilde\Omega_R)} \leq C \norm{\nabla u}_{L^p(\widetilde\Omega_R)}
  \end{equation}
  for all $1 \leq q < \infty$ such that $\frac1p - \frac1q \leq \frac1n,$ with $C=C(n,p,q,\beta,\norm{\Omega}_{C^{1,\beta}})>0.$ Also the same conclusion holds for $\Omega_R(x_0)$ in place of $\widetilde\Omega_R(x_0).$
\end{lem}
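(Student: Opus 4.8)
The plan is to reduce everything to the standard Poincaré–Sobolev inequality on a fixed reference domain and then track how the constants scale under the dilations and the boundary-flattening diffeomorphism already introduced in the proof of Lemma \ref{lem:boundary_linearelliptic}. I will treat the two cases — the modified domain $\widetilde\Omega_R(x_0)$ and the full slice $\Omega_R(x_0)$ — separately, since they call for slightly different devices.

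For $\widetilde\Omega_R(x_0)$, recall $\widetilde\Omega_R = \psi^{-1}\big(\tfrac{18R}{25R_0}A\big)$ where $A \subset \bb R^n_+$ is a \emph{fixed} smooth domain and $\psi \colon B_{R_0}(x_0) \to U$ is a bi-Lipschitz diffeomorphism with $\psi(B_{R_0}\cap\Omega) = U \cap \bb R^n_+$ and $D\psi(x_0)$ orthogonal. First I would note that on the fixed domain $A$, for any $1 < p < \infty$ and any $q$ with $\tfrac1p - \tfrac1q \leq \tfrac1n$, the Poincaré–Sobolev inequality $\norm{v}_{L^q(A)} \leq C(n,p,q,A)\norm{\nabla v}_{L^p(A)}$ holds for all $v \in W^{1,p}(A,\bb R^N)$ vanishing on the flat portion $\partial A \cap \{y_n = 0\}$ in the trace sense; this is classical since $A$ is a smooth domain and that portion of the boundary has positive measure. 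I would then transfer this to $\tfrac{18R}{25R_0}A =: A_R$ by the dilation $y \mapsto \tfrac{18R}{25R_0}y$, which produces the scaling factor $R^{\frac np - \frac nq - 1}$ (up to a constant depending on $R_0$ and hence on $\norm{\Omega}_{C^{1,\beta}}$, since $R_0$ is chosen in terms of the domain); the vanishing trace condition is preserved because the flat part of $\partial A$ maps to the flat part of $\partial A_R$. Finally, composing with $\psi^{-1}$ and using that $\nabla\psi, \nabla\psi^{-1}$ are bounded in $L^\infty$ with bounds depending only on $n, \beta, \norm{\Omega}_{C^{1,\beta}}$ (and that the flat boundary of $A_R$ corresponds under $\psi^{-1}$ to $\partial\Omega \cap B_R(x_0)$, so the trace condition $u = 0$ there carries over), the change of variables in both integrals costs only constants of the same type, and possibly a harmless factor from $\lvert\det\nabla\psi\rvert$; the claimed inequality follows.

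For $\Omega_R(x_0)$ the cleanest route is extension by zero. Given $u \in W^{1,p}(\Omega_R(x_0),\bb R^N)$ with $u = 0$ on $\partial\Omega \cap B_R(x_0)$ in the trace sense, I would extend $u$ by $0$ on $B_R(x_0)\setminus\Omega$; since $u$ vanishes in the trace sense precisely on the part of $\partial\Omega_R(x_0)$ lying inside $B_R(x_0)$, the extension $\bar u$ lies in $W^{1,p}(B_R(x_0),\bb R^N)$ with $\nabla\bar u = (\nabla u)\chi_{\Omega_R(x_0)}$. Moreover $\bar u$ vanishes on a set of measure comparable to $\cal L^n(B_R(x_0))$ — indeed $\Omega_R(x_0)$ occupies at most, say, a definite fraction of $B_R(x_0)$ once $R_0$ is small, because $\partial\Omega$ is a $C^{1,\beta}$ graph through $x_0$ and hence $B_R(x_0)\setminus\Omega$ has measure bounded below by $c(n)\cal L^n(B_R)$ for $R < R_0$; this lower bound on the complement is what one needs so that the Poincaré constant on $B_R$ for functions vanishing on a set of controlled measure is uniform. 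Then the Poincaré–Sobolev inequality on the ball $B_R(x_0)$ for functions in $W^{1,p}_0$-adjacent classes (vanishing on a fixed-proportion subset) gives $R^{\frac np - \frac nq - 1}\norm{\bar u}_{L^q(B_R)} \leq C\norm{\nabla\bar u}_{L^p(B_R)}$ with the correct scaling, and restricting back to $\Omega_R(x_0)$ concludes.

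The main obstacle I anticipate is not any single estimate but the bookkeeping of constants: one must verify at each stage (the fixed-domain inequality, the dilation, the diffeomorphism, and — in the second case — the measure lower bound on $B_R\setminus\Omega$) that every constant depends only on $n, p, q, \beta$ and $\norm{\Omega}_{C^{1,\beta}}$ and not on $x_0$ or $R$, which ultimately rests on the uniformity of the graph representation of $\partial\Omega$ and on $R_0$ having been fixed in terms of the domain. A secondary technical point is ensuring the trace-vanishing hypothesis is correctly preserved under the boundary-flattening map — this is where the requirement $D\psi(x_0)$ orthogonal and the precise identification $\psi(B_{R_0}\cap\Omega) = U\cap\bb R^n_+$ from Lemma \ref{lem:boundary_linearelliptic} is used, so that the "Dirichlet part" of the boundary is exactly the flat part of $A$.
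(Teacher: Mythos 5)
Your proposal is correct and follows essentially the same route as the paper, which proves the lemma exactly by flattening via the diffeomorphism $\psi$ and rescaling the fixed smooth domain $A$ for $\widetilde\Omega_R(x_0),$ and by extension by zero to $B_R(x_0)$ for $\Omega_R(x_0).$ Your filling-in of the details (the Poincar\'e--Sobolev inequality on $A$ for functions vanishing on the flat face, the dilation scaling $R^{\frac np-\frac nq-1},$ and the measure lower bound on $B_R(x_0)\setminus\Omega$ ensuring a uniform constant) is exactly what the paper's sketch intends.
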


\section{Regularity up to the boundary for \texorpdfstring{$F$}{F}-extremals}\label{sec:boundaryregularity}

We now use the results from the previous section to prove Theorem \ref{thm:autonomous_boundary_regularity}. The framework will be analogous to the interior regularity theory, involving establishing a Caccioppoli-type inequality and a harmonic approximation result.

We will continue to use the notation introduced in Section \ref{sec:autonomous_integrand}. Additionally, given a bounded $C^{1,\beta}$ domain $\Omega \subset \bb R^n,$ we will fix $R_0>0$ and $\delta \in (0,1)$ such that $\Omega_R(x_0)$ is a $\delta$-John domain for all $x_0 \in \partial\Omega,$ $0<R<R_0,$ and given $\rho$ as above we will also assume that we have $\cal L^n(\Omega_R(x_0)) \geq 4^{-n}\cal L^n(B_R(x_0))$ and
\begin{equation}
  C(n)^{-1} R^2 \leq \dashint_{\Omega_R(x_0)} \frac{\rho(x)^2}{\lvert \nabla\rho(x_0)\rvert^2} \,\d x \leq C(n) R^2,\label{eq:weightedcusp}
\end{equation}
for all $R<R_0.$ Shrinking $R_0$ further if necessary, we will moreover assume Proposition \ref{prop:local_john} and Lemmas \ref{lem:boundary_aconstruction}, \ref{lem:boundary_linearelliptic}, \ref{lem:scaled_poincare} from the previous section hold with this choice of $R_0.$

\subsection{Boundary Caccioppoli-type inequality}

\begin{lem}[Boundary Caccioppoli-type inequality]\label{lem:autonomous_boundary_caccioppoli}
  Suppose $F$ satisfies Hypotheses \ref{hyp:autonomous_F}, let $M\geq 1,$ and suppose $\Omega \subset \bb R^n$ is a bounded $C^{1,\beta}$ domain for some $\beta \in (0,1).$ Given $g \in C^{1,\beta}(\overline\Omega,\bb R^N),$ there is $R_0=R_0(n,\Omega)>0$ such that the following holds. Suppose $x_0 \in \partial\Omega,$ $0<R<R_0,$ and $u \in W^{1,q}_g(\Omega,\bb R^N)$ is $F$-extremal in $\Omega_R(x_0)$ such that $\nabla u \in \BMO(\Omega_R(x_0),\bb R^{Nn}),$ $\seminorm{\nabla u}_{\BMO(\Omega_R(x_0))}\leq 1,$ and $\lvert (\nabla u)_{\Omega_R}\rvert \leq M.$ Then if we define
  \begin{equation}\label{eq:autonomous_boundary_approximant}
    a_R(x) = \xi_R\, \frac{\rho(x)}{\lvert \nabla\rho(x_0)\rvert} = \frac{((u-g) \rho)_{\Omega_R}}{(\rho^2)_{\Omega_R}}\rho(x),
  \end{equation}
  with $\rho$ the defining function for $\Omega$ as in Section \ref{sec:boundary_averages}, we have the estimate
  \begin{equation}\label{eq:autonomous_boundary_caccioppoli}
    \begin{split}
      \dashint_{\Omega_{R/2}} \lvert \nabla u - (\nabla u)_{\Omega_{R/2}}\rvert^2 \,\d x &\leq C\,\gamma\left(\seminorm{\nabla u}_{\BMO(\Omega_R)}\right) \dashint_{\Omega_R} \lvert \nabla u - (\nabla u)_{\Omega_{R}}\rvert^2 \,\d x  \\
      &\quad + \frac{C}{R^2} \dashint_{\Omega_R} \lvert u - g - a_R\rvert^2 \,\d x + CM^{2(q-1)}R^{2\beta},
    \end{split}
  \end{equation}
  where setting $\widetilde M = C(n,\beta,\norm{\Omega}_{C^{1,\beta}},\norm{\nabla g}_{C^{0,\beta}(\Omega)})M,$ $\gamma \colon [0,\infty) \to [0,1]$ is a non-decreasing continuous function satisfying $\gamma(0)=0$ depending on $\omega_{\widetilde M}$ and $q$ only, and
  \begin{equation}
    C=C\left(n,N,q,K_{\widetilde M}/\lambda_{\widetilde M},\delta,\norm{\Omega}_{C^{1,\beta}},R_0,\seminorm{\nabla g}_{C^{0,\beta}(\Omega)}\right)>0.
  \end{equation}
\end{lem}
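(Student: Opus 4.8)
The plan is to adapt the interior argument from Lemma \ref{lem:autonomous_caccioppoli} to the boundary setting, replacing the affine approximant by the multiple of the defining function $a_R$ and the interior cutoff by one that also vanishes on $\partial\Omega$. First I would reduce to a shifted integrand: set $\widetilde F(z) = F_{\nabla a_R(x_0)}(z)$ using \eqref{eq:autonomous_shifted} centred at the (constant, since $\rho$ is only $C^{1,\beta}$) value of $\nabla a_R$ at $x_0$, which is $\xi_R\tensor\nu_{x_0}$; note $|\xi_R| \lesssim M$ by \eqref{eq:weightedcusp} and the bound on $(\nabla u)_{\Omega_R}$, so $|\nabla a_R(x_0)| \leq \widetilde M$ and the quantitative Legendre-Hadamard inequality \eqref{eq:autonomous_quantitative_LH} holds for $\widetilde F''(0)$. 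Since $u$ is $F$-extremal and $g$ is a fixed map, $w := u - g - a_R$ satisfies a perturbed Euler-Lagrange system; crucially $w$ vanishes on $\partial\Omega \cap B_R(x_0)$ because $u - g$ does (boundary data) and $\rho$ does (defining function), so we may test against $\phi = \eta^2 w$ with $\eta$ a cutoff supported in $B_R(x_0)$, $\eta \equiv 1$ on $B_{R/2}(x_0)$, $|\nabla\eta| \leq C/R$. The extra terms arising here, compared with the interior case, are those involving $\nabla g$ and the non-constant part of $\nabla\rho$ — both are controlled in $C^{0,\beta}$, which is exactly where the $M^{2(q-1)}R^{2\beta}$ error term and the $\widetilde M$-dependence on $\|\nabla g\|_{C^{0,\beta}}$, $\|\Omega\|_{C^{1,\beta}}$ enter.

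Next I would run the coercivity-minus-equation computation as in the proof of Lemma \ref{lem:autonomous_caccioppoli}: use the $L^2$-coercivity \eqref{eq:LH_coercivity} for $\widetilde F''(0)$ applied to $\eta w \in W^{1,2}_0$, subtract the tested equation, and expand. The leading ``good'' error term $\int \eta^2(\widetilde F''(0)\nabla w - \widetilde F'(\nabla w)):\nabla w$ is handled by the comparison estimate \eqref{eq:autonomous_F_pertubation} together with the modular Fefferman-Stein estimate in John domains (Corollary \ref{cor:BMO_modulus} applied on $\Omega_R(x_0)$, which is a $\delta$-John domain by Proposition \ref{prop:local_john}); here one writes $\nabla w = \nabla u - (\nabla u)_{\Omega_R} + \big((\nabla u)_{\Omega_R} - \nabla a_R\big)$ and notes that $|(\nabla u)_{\Omega_R} - \nabla a_R|$ is controlled in $L^2$-average by $\seminorm{\nabla u}_{\BMO(\Omega_R)}$ plus an $R^\beta$ term coming from the Hölder continuity of $\nabla\rho$, using a boundary analogue of Lemma \ref{lem:interior_affinechoice} (or directly Lemma \ref{lem:boundary_aconstruction}). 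The $\nabla\eta$ terms and the $\nabla g$ terms are absorbed via Young's inequality, producing $\frac{C}{R^2}\int|w|^2$ and $CM^{2(q-1)}R^{2\beta}$ respectively; the higher-power terms $|\nabla w|^q$, $|\nabla w|^{2(q-1)}$ are reduced to $L^2$-norms by the global John-Nirenberg inequality (Proposition \ref{prop:global_JohnNirenberg}) on the John domain $\Omega_R(x_0)$, picking up factors $\seminorm{\nabla u}_{\BMO(\Omega_R)}^{q-2}$ etc., which get folded into $\gamma$.

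One technical point that needs care is the passage from $\dashint_{\Omega_{R/2}}|\nabla u - \nabla a_R|^2$ (what the argument naturally yields) to $\dashint_{\Omega_{R/2}}|\nabla u - (\nabla u)_{\Omega_{R/2}}|^2$ (the excess appearing in the statement): since $(\nabla u)_{\Omega_{R/2}}$ minimises $z \mapsto \dashint_{\Omega_{R/2}}|\nabla u - z|^2$ over constants, the latter is bounded by the former, so this direction is free. Similarly on the right-hand side one replaces $\dashint_{\Omega_R}|\nabla u - \nabla a_R|^2$ by $\dashint_{\Omega_R}|\nabla u - (\nabla u)_{\Omega_R}|^2 + |(\nabla u)_{\Omega_R} - \nabla a_R|^2$ and controls the second summand as above, incurring another $R^{2\beta}$ contribution. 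I expect the main obstacle to be the bookkeeping of the new boundary error terms — specifically, verifying that every occurrence of the non-constant Hölder part of $\nabla\rho/|\nabla\rho(x_0)|$ and of $\nabla g$ contributes only a term of the claimed order $M^{2(q-1)}R^{2\beta}$ (using $|\nabla w|^{q-1}$ bounds together with John-Nirenberg and the bound $|\xi_R| \lesssim M$), rather than something that would need to be absorbed into the left-hand side and cannot be. The use of the $C^{1,\beta}$ graph representation, the bounds \eqref{eq:weightedcusp}, and Lemma \ref{lem:boundary_aconstruction} are precisely what make these error terms admissible, and the volume comparison $\cal L^n(\Omega_R) \geq 4^{-n}\cal L^n(B_R)$ ensures all averages behave as in the interior case.
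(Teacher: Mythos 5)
Your overall skeleton matches the paper's proof (test the equation with $\eta^2 w$ for $w=u-g-a_R$, run coercivity minus the tested equation for a shifted integrand, control the nonlinearity with the modular Fefferman--Stein estimate on the John domain $\Omega_R(x_0)$, reduce higher powers by John--Nirenberg, and use the Kronz/Campos Cordero comparisons to generate only $R^{\beta}$-errors). However, there is a genuine gap in your linearisation step: you shift at $\nabla a_R(x_0)=\xi_R\tensor\nu_{x_0}$ and work with $\widetilde F'(\nabla w)$, and correspondingly you claim that $\lvert (\nabla u)_{\Omega_R}-\nabla a_R\rvert$ is controlled by $\seminorm{\nabla u}_{\BMO(\Omega_R)}$ plus an $R^{\beta}$ term. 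This is false in general: $\xi_R\tensor\nu_{x_0}$ approximates the normal part of $(\nabla(u-g))_{\Omega_R}$, so $(\nabla u)_{\Omega_R}-\nabla a_R$ differs from a small quantity by $(\nabla g)_{\Omega_R}$, which is of order one, not $O(R^{\beta})$; only the oscillation $\nabla g-(\nabla g)_{\Omega_R}$ is H\"older-small. The same problem appears when you relate the Euler--Lagrange equation for $u$ to $\widetilde F'(\nabla w)$: with your shift, $F'(\nabla u)$ and $F'(\xi_R\tensor\nu_{x_0}+\nabla w)$ are evaluated at arguments differing by $\nabla g$ plus an $O(MR^{\beta})$ term, so the resulting perturbation is of size $\norm{\nabla g}_{L^\infty}$ and, after Young's inequality, contributes a constant-size error (or, if fed into $\omega_{\widetilde M}$, a modulus factor that does not vanish as $\seminorm{\nabla u}_{\BMO}\to0$). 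Either way the claimed structure $\gamma(\seminorm{\nabla u}_{\BMO(\Omega_R)})\,E + CM^{2(q-1)}R^{2\beta}$ is not obtained.

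The paper's fix is precisely to take the linearisation point $z_R=\xi_R\tensor\nu_{x_0}+(\nabla g)_{\Omega_R}$ as in \eqref{eq:autonomous_z0_defn} (bounded by $\widetilde M$ via Lemma \ref{lem:scaled_poincare}) and to write all comparison terms in $\nabla u - z_R$ rather than $\nabla w$: then the pointwise discrepancy $z_R-\nabla a_R-\nabla g$ involves only the oscillations of $\nabla\rho$ and $\nabla g$, hence is $\leq CMR^{\beta}$, and the smallness estimates (Remark \ref{rem:affine_subtraction} and Lemma \ref{lem:boundary_aconstruction}, followed by Proposition \ref{prop:global_JohnNirenberg}) are applied to $\nabla(u-g)$, not to $\nabla u$. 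A secondary point you should also make explicit: the $R^{\beta}$-pieces must be kept outside the modulus of continuity, by splitting $\lvert\nabla u - z_R\rvert$ inside $\Phi(t)=\omega_{\widetilde M}(t)(t^2+t^{2(q-1)})$ and using its doubling property with $\omega_{\widetilde M}\leq1$ on the $R^{\beta}$-piece (giving $CM^{2(q-1)}R^{2\beta}$), while only the oscillation piece is handled by Corollary \ref{cor:BMO_modulus}; if instead you insert $\seminorm{\nabla u}_{\BMO}+CMR^{\beta}$ into $\omega_{\widetilde M}$ as in the interior proof, you pick up a factor $\omega_{\widetilde M}(CMR^{\beta})$ which need not be $O(R^{2\beta})$, and the lemma as stated does not follow.
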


The main technical obstruction is that we need a suitable test function $\phi$ vanishing on $\partial\Omega \cap B_R(x_0)$ in our coercivity estimates. We will achieve this without flattening the boundary, using ideas from \textsc{Campos Cordero} \cite[Chapter 4]{thesis:Campos_Cordero} and results from Section \ref{sec:boundary_averages}.

\begin{rem}\label{rem:affine_subtraction}
  Similarly as in the interior case, the choice of $a_R(x)$ in (\ref{eq:autonomous_boundary_approximant}) ensures that
  \begin{equation}
    \xi \mapsto \int_{\Omega_R} \left\lvert u-g- \xi\, \frac{\rho}{\lvert \nabla\rho(x_0)\rvert}\right\rvert^2 \,\d x
  \end{equation}
  is minimised in $\xi \in \bb R^N$ when $\xi = \xi_R$ from \eqref{eq:autonomous_boundary_approximant}, as noted by \textsc{Kronz} \cite{article:Kronz05}. If we set $\xi = (\nabla(u-g) \cdot \nu(x_0))_{\Omega_R},$ these can be compared through estimate
  \begin{equation}
    \begin{split}
      &\lvert \xi_R -  (\nabla(u-g) \cdot \nu(x_0))_{\Omega_R}\rvert \\
      &\qquad \leq C \left(\dashint_{\Omega_R} \left\lvert \nabla(u - g) - (\nabla(u-g) \cdot \nu(x_0))_{\Omega_R} \tensor \frac{\nabla \rho}{\lvert \nabla\rho(x_0)\rvert}\right\rvert^2 \,\d x\right)^{\frac12} + CMR^{\beta},
    \end{split}
  \end{equation}
  where $C=C(n,\beta,\norm{\Omega}_{C^{1,\beta}})>0.$ This is proved in \cite[Lemma 2(ii)]{article:Kronz05}, relying on the Poincar\'e inequality (Lemma \ref{lem:scaled_poincare}) and (\ref{eq:weightedcusp}).
\end{rem}

\begin{proof}
  Let $R_0>0$ as in the beginning of this section, and define
  \begin{equation}
    w(x) = u(x) - g(x) - a_R(x),
  \end{equation}
  noting that $w = 0$ on $\partial\Omega \cap B_R(x_0).$ We also fix a cutoff $\eta \in C^{\infty}_c(B_R(x_0))$ such that $1_{B_{R/2}(x_0)} \leq \eta \leq 1_{B_R(x_0)}$ and $\lvert \nabla\eta\rvert \leq \frac CR,$ and consider the shifted functional $\widetilde F(z) = F_{z_{R}}(z)$ as in (\ref{eq:autonomous_shifted}) where
  \begin{equation}\label{eq:autonomous_z0_defn}
    z_{R} = \xi_R \tensor \nu_{x_0} + (\nabla g)_{\Omega_R},
  \end{equation}
  with $\xi_R$ as in (\ref{eq:autonomous_boundary_approximant}). Using the Poincar\'e inequality (Lemma \ref{lem:scaled_poincare}), we can choose $\widetilde M$ so that
  \begin{equation}
    \lvert z_R\rvert \leq C(n,\beta,\norm{\Omega}_{C^{1,\beta}}) \left(\dashint_{\Omega_R} \lvert \nabla u - \nabla g\rvert^2 \,\d x \right)^{\frac12} + \lvert (\nabla g)_{\Omega_R}\rvert \leq \widetilde M.
  \end{equation}
  Now by the strict Legendre-Hadamard condition applied to $\eta w$ and testing the equation (\ref{eq:autonomous_EL}) against $\eta^2w$ we have
  \begin{equation}
    \begin{split}
      \lambda_{\widetilde M} \dashint_{\Omega_R} \lvert \nabla(\eta w)\rvert^2 \,d x &\leq \dashint_{\Omega_R} \widetilde F''(0)\nabla(\eta w) : \nabla(\eta w) \,\d x - \dashint_{\Omega_R} \widetilde F'(\nabla u -z_{R}): \nabla(\eta^2w) \,\d x \\
      &= \dashint_{\Omega_R} \eta\left( \widetilde F''(0)(\nabla u - z_{R}) - \widetilde F'(\nabla u - z_{R})\right) : \nabla(\eta w) \,\d x \\
      &\quad+ \dashint_{\Omega_R} \eta \widetilde F''(0)(z_{R} - \nabla a_R - \nabla g) : \nabla(\eta w) \,\d x \\
      &\quad+ \dashint_{\Omega_R} w \widetilde F''(0)\nabla \eta : \nabla(\eta w) \,\d x -\dashint_{\Omega_R} \eta w \widetilde F'(\nabla u -z_{R}) : \nabla\eta \,\d x.
    \end{split}
  \end{equation}
  We can absorb the $\nabla(\eta w)$ terms using Cauchy-Schwarz and Young's inequality; for the last term we can use the growth estimate (\ref{eq:autonomous_shifted_estimate2}) for $\widetilde F'$ to estimate
  \begin{equation}
    \begin{split}
      &\dashint_{\Omega_R} \eta w \widetilde F'(\nabla u -z_{R}) : \nabla\eta \,\d x \\
      &\qquad\leq K_{\widetilde M}\dashint_{\Omega_R} \lvert w \nabla \eta\rvert \left( \lvert \eta(\nabla u - z_{R})\rvert + \eta\lvert \nabla u - z_{R}\rvert^{q-1}\right) \,\d x \\
      &\qquad \leq \frac{CK_{\widetilde M}^2}{\lambda_{\widetilde M}}\dashint_{\Omega_R} \lvert w \nabla \eta\rvert^2 \,\d x + \frac{\lambda_{\widetilde M}}8 \dashint_{\Omega_R} \lvert \nabla(\eta w)\rvert^2 + \eta^2 \lvert \nabla w\rvert^{2(q-1)} \,\d x \\
      &\qquad\quad + C\lambda_{\widetilde M}\dashint_{\Omega_R} \eta^2 \lvert z_R-a_R-\nabla g\rvert^2 + \eta^2 \lvert z_R-a_R-\nabla g\rvert^{2(q-1)} \,\d x.
    \end{split}
  \end{equation}
  Hence since $\eta^2 \leq 1$ we deduce that
  \begin{equation}
    \begin{split}
      \frac12 \dashint_{\Omega_R} \lvert \nabla(\eta w)\rvert^2 \,\d x &\leq \frac{4}{\lambda_{\widetilde M}} \dashint_{\Omega_R} \left\lvert  \widetilde F''(0)(\nabla u - z_{R}) - \widetilde F'(\nabla u - z_{R})\right\rvert^2 \,\d x \\
      &\quad + C\dashint_{\Omega_R} \left( \lvert z_{R} - \nabla a_R - \nabla g\rvert^2 + \lvert z_{R} - \nabla a_R - \nabla g\rvert^{2(q-1)} \right) \,\d x \\
      &\quad + \frac{C}{R^2} \dashint_{\Omega_R} \lvert w\rvert^2 \,\d x + C\dashint_{\Omega_R} \lvert \nabla w\rvert^{2(q-1)} \,\d x,
    \end{split}
  \end{equation}
  where the final term can be omitted if $q=2.$ For the second term we note that since $g, \rho$ are $C^{1,\beta}$ we have
  \begin{equation}
    \lvert z_{R} - \nabla a_R - \nabla g\rvert \leq \left\lvert  \xi_R \tensor \nu_{x_0}\right\rvert \frac{\lvert \nabla\rho(x)-\nabla\rho(x_0)\rvert}{\lvert \nabla\rho(x_0)\rvert} + \lvert \nabla g - (\nabla g)_{\Omega_R}\rvert \leq CM R^{\beta}
  \end{equation}
  in $\Omega_R,$ where $C = C(\norm{\Omega}_{C^{1,\beta}}, \seminorm{\nabla g}_{C^{0,\beta}})>0.$ For the first term we apply the comparison estimate (\ref{eq:autonomous_F_pertubation}); writing $\Phi(t) = \omega_{\widetilde M}(t)(t^2+t^{2(q-1)})$ this gives
  \begin{equation}
    \dashint_{\Omega_R} \left\lvert  \widetilde F''(0)(\nabla u - z_{R}) - \widetilde F'(\nabla u - z_{R})\right\rvert^2 \,\d x \leq K_{\widetilde M} \dashint_{\Omega_R} \Phi(\lvert \nabla u - z_{R}\rvert) \,\d x,
  \end{equation}
  noting that $\omega_{\widetilde M}(t)\leq 1.$ Now we estimate
  \begin{equation}\label{eq:autonomous_z0split}
    \begin{split}
      \lvert \nabla u - z_{R}\rvert &\leq \lvert \nabla u - (\nabla u)_{\Omega_R}\rvert+ \lvert \xi_R - ((\nabla(u - g)) \cdot \nu_{x_0})_{\Omega_R}\rvert \\
      &\quad+ \lvert (\nabla(u - g))_{\Omega_R} - ((\nabla( u - g)) \cdot \nu_{x_0})_{\Omega_R} \tensor \nu_{x_0}\rvert .
    \end{split}
  \end{equation}
  By Remark \ref{rem:affine_subtraction} the second term can be estimated as
  \begin{equation}
    \begin{split}
      &\lvert \xi_R - (\nabla(u-g)\cdot\nu_{x_0})_{\Omega_R}\rvert \\
      &\qquad\leq C\left(\int_{\Omega_R} \left\lvert \nabla u - \nabla g - (\nabla(u-g) \cdot \nu_{x_0})_{\Omega_R} \tensor \nu_{x_0}\right\rvert^2 \,\d x\right)^{\frac12} + CMR^{\beta} \\
      &\qquad\leq C \lvert (\nabla(u - g))_{\Omega_R} - ((\nabla( u - g)) \cdot \nu_{x_0})_{\Omega_R} \tensor \nu_{x_0}\rvert \\
      &\qquad\quad + C\left(\int_{\Omega_R} \left\lvert \nabla u - \nabla g - (\nabla(u-g))_{\Omega_R}\right\rvert^2 \,\d x\right)^{\frac12} + CMR^{\beta},
    \end{split}
  \end{equation}
  and applying \textsc{Campos Cordero}'s trick  (Lemma \ref{lem:boundary_aconstruction}) followed by the John-Nirenberg estimate (Proposition \ref{prop:global_JohnNirenberg}) we have
  \begin{equation}\label{eq:autonomous_remainder_averages}
    \begin{split}
      &\lvert (\nabla u - \nabla g)_{\Omega_R} - ((\nabla u - \nabla g) \cdot \nu_{x_0})_{\Omega_R} \tensor \nu_{x_0}\rvert \\
      &\qquad\leq C \left(\dashint_{\Omega_R} \lvert \nabla u - \nabla g - (\nabla u - \nabla g)_{\Omega_R}\rvert^p \,\d x\right)^{\frac1p} + CMR^{\beta} \\
      &\qquad\leq C\seminorm{\nabla u - \nabla g}_{\BMO(\Omega_R)} + CMR^{\beta},
    \end{split}
  \end{equation}
  for $p \in \{2,q\}.$ Also applying the modular Fefferman-Stein estimate (Corollary \ref{cor:BMO_modulus})  we can bound
  \begin{equation}
    \begin{split}
      &\dashint_{\Omega_R} \Phi\left(\lvert \nabla u - (\nabla u)_{\Omega_R}\rvert\right) \,\d x \\
      &\qquad\leq C\, \omega_{\widetilde M}\left(\seminorm{\nabla u}_{\BMO(\Omega_R)}\right) \dashint_{\Omega_R} \lvert \nabla u - (\nabla u)_{\Omega_R}\rvert^2 + \lvert \nabla u - (\nabla u)_{\Omega_R}\rvert^{2(q-1)} \,\d x.
    \end{split}
  \end{equation}
  Now since $\seminorm{\nabla g}_{\BMO(\Omega_R)} \leq CR^{\beta}$ and $\Phi(R^{\beta}) \leq \left(1+R_0^{2(q-2)}\right)R^{2\beta},$ we can combine the above using the doubling property of $\Phi$ to get
  \begin{equation}
    \begin{split}
      &\dashint_{\Omega_R} \Phi\left(\lvert \nabla u - z_0\rvert\right) \,\d x \leq  CM^{2(q-1)}R^{2\beta}\\
      &\qquad + C\, \omega_{\widetilde M}\left(\seminorm{\nabla u}_{\BMO(\Omega_R)}\right) \dashint_{\Omega_R} \lvert \nabla u - (\nabla u)_{\Omega_R}\rvert^2 + \lvert \nabla u - (\nabla u)_{\Omega_R}\rvert^{2(q-1)} \,\d x.
    \end{split}
 \end{equation}
 To complete the estimate, note by the John-Nirenberg inequality (Proposition \ref{prop:global_JohnNirenberg}) that
 \begin{equation}
   \dashint_{\Omega_R} \lvert \nabla u - (\nabla u)_{\Omega_R}\rvert^{2(q-1)} \leq C \seminorm{\nabla u}_{\BMO(\Omega_R)}^{2(q-2)} \dashint_{\Omega_R} \lvert \nabla u - (\nabla u)_{\Omega_R}\rvert^2 \,\d x,
 \end{equation}
 and similarly
 \begin{equation}
   \dashint_{\Omega_R} \lvert \nabla w\rvert^{2(q-1)} \,\d x \leq C\seminorm{\nabla u}_{\BMO(\Omega_R)}^{2(q-2)} \dashint_{\Omega_R} \lvert \nabla u - (\nabla u)_{\Omega_R}\rvert^2 \,\d x + CM^{2(q-1)}R^{2\beta}.
 \end{equation}
 Hence putting everything together gives
 \begin{equation}
   \begin{split}
     \dashint_{\Omega_R} \lvert \nabla(\eta w)\rvert^2 \,\d x &\leq C\,\omega_{\widetilde M}\left(\seminorm{\nabla u}_{\BMO(\Omega_R)}\right)\left(1 + \seminorm{\nabla u}_{\BMO(\Omega_R)}^{2(q-2)}\right) \dashint_{\Omega_R} \lvert \nabla u - (\nabla u)_{\Omega_R}\rvert^2 \\
     &\quad + \frac{C}{R^2}\dashint_{\Omega_R} \lvert w\rvert^2 \,\d x + C\seminorm{\nabla u}_{\BMO(\Omega_R)}^{2(q-2)} \dashint_{\Omega_R} \lvert \nabla u - (\nabla u)_{\Omega_R}\rvert^2 \,\d x \\
     &\quad+ CM^{2(q-1)}R^{2\beta},
   \end{split}
 \end{equation}
 from which the result follows taking $\gamma(t)=\min\{1,\omega_{\widetilde M}(t)(1+t^{2(q-2)})+t^{2(q-2)}\},$ omitting the $t^{2(q-2)}$ terms if $q=2.$
\end{proof}

\subsection{Boundary harmonic approximation}

\begin{lem}[Boundary harmonic approximation]\label{lem:autonomous_harmonic_boundary}
  Suppose $F$ satisfies Hypotheses \ref{hyp:autonomous_F}, let $M \geq 1,$ and suppose $\Omega \subset \bb R^n$ is a bounded $C^{1,\beta}$ domain and $g \in C^{1,\beta}(\overline\Omega,\bb R^N),$ for some $\beta \in (0,1).$ Suppose $x_0 \in\partial\Omega,$ $0<R <R_0$ with $R_0=R_0(n,\Omega)>0$ and $u \in W^{1,q}_g(\Omega_{R},\bb R^N)$ is $F$-extremal in $\Omega_R(x_0)$ with $\nabla u \in \BMO(\Omega_{R},\bb R^{Nn}),$ $\seminorm{\nabla u}_{\BMO(\Omega_R)} \leq 1,$ and $\lvert (\nabla u)_{\Omega_R}\rvert \leq M.$

  Then letting $\widetilde\Omega_R$ as in Lemma \ref{lem:boundary_linearelliptic}, the unique solution $h \in W^{1,2}(\widetilde\Omega_R,\bb R^N)$ to the Dirichlet problem
  \begin{equation}
    \pdeproblem{-\div F''(z_{R})\nabla h}{0}{\widetilde\Omega_R,}{h}{u-g-a_{R}}{\partial\widetilde\Omega_R,}
  \end{equation}
  with $z_{R}, a_{R}$ as in (\ref{eq:autonomous_boundary_approximant}), (\ref{eq:autonomous_z0_defn}) respectively satisfies
  \begin{equation}
    \int_{\widetilde\Omega_R} \lvert \nabla h\rvert^2 \,\d x \leq C \int_{\widetilde\Omega_R} \lvert \nabla(u-g-a_{R})\rvert^2 \,\d x,
  \end{equation}
  where $C=C\left(n,K_{\widetilde M}/\lambda_{\widetilde M}\right)>0$ with $\widetilde M = C\left(n,\beta,\norm{\Omega}_{C^{1,\beta}},\norm{\nabla g}_{C^{0,\beta}(\Omega)}\right)M.$ Moreover we have the remainder estimate
  \begin{equation}
    \frac1{R^2}\dashint_{\widetilde\Omega_R} \lvert u-g-a_{R}-h\rvert^2 \,\d x \leq C \gamma\left(\seminorm{\nabla u}_{\BMO(\Omega_{R})}\right) \dashint_{\Omega_{R}} \lvert \nabla u - (\nabla u)_{\Omega_{R}}\rvert^2 \,\d x + CM^2R^{2\beta},
  \end{equation}
  where $C=C\left(n,N,q,K_{\widetilde M}/\lambda_{\widetilde M},\norm{\Omega}_{C^{1,\beta}},\seminorm{\nabla g}_{C^{0,\beta}(\Omega)}\right)>0$ and $\gamma \colon [0,\infty) \to [0,1]$ non-decreasing continuous such that $\gamma(0)=0,$ depending on $n, q$ and $\omega_{\widetilde M}$ only.
\end{lem}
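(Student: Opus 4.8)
The plan is to carry out the duality argument of Lemma~\ref{lem:autonomous_harmonic_approximation} on the regularised domain $\widetilde\Omega_R$, now with the frozen coefficient $F''(z_R)$ in place of $F''(\nabla a_R)$. After rescaling $F$ by $\lambda_{\widetilde M}^{-1}$ we may assume $\lambda_{\widetilde M}=1$; set $\widetilde F=F_{z_R}$ as in~\eqref{eq:autonomous_shifted} and $w=u-g-a_R$, which vanishes on $\partial\Omega\cap B_R(x_0)$ and, as in the proof of Lemma~\ref{lem:autonomous_boundary_caccioppoli}, satisfies $\lvert z_R\rvert\leq\widetilde M$ by the Poincar\'e inequality (Lemma~\ref{lem:scaled_poincare}). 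Existence and uniqueness of $h$, and the bound $\int_{\widetilde\Omega_R}\lvert\nabla h\rvert^2\,\d x\leq C\int_{\widetilde\Omega_R}\lvert\nabla w\rvert^2\,\d x$, then follow from the $L^2$-coercivity~\eqref{eq:LH_coercivity} of $F''(z_R)$ on $W^{1,2}_0(\widetilde\Omega_R,\bb R^N)$ and Lax--Milgram, testing the difference of the two systems against $h-w$.

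For the remainder estimate, any $\phi\in W^{1,2}_0(\widetilde\Omega_R,\bb R^N)$, extended by zero, is admissible in the system for $u$ on $\Omega_R(x_0)$ (using that $\nabla u$ is locally $L^r$-integrable for all $r<\infty$ by John--Nirenberg, so $F'(\nabla u)\in L^2_{\loc}$); since $F'(z_R)$ is constant, $h$ solves the frozen equation, and $\widetilde F''(0)=F''(z_R)$, $\widetilde F'(\nabla u-z_R)=F'(\nabla u)-F'(z_R)$, we obtain
\begin{equation*}
  \int_{\widetilde\Omega_R} F''(z_R)(\nabla w-\nabla h):\nabla\phi\,\d x = \int_{\widetilde\Omega_R}\bigl(\widetilde F''(0)(\nabla u-z_R)-\widetilde F'(\nabla u-z_R)\bigr):\nabla\phi\,\d x + \int_{\widetilde\Omega_R} F''(z_R)(z_R-\nabla g-\nabla a_R):\nabla\phi\,\d x,
\end{equation*}
and the comparison estimate~\eqref{eq:autonomous_F_pertubation} together with the pointwise bound $\lvert z_R-\nabla g-\nabla a_R\rvert\leq CMR^\beta$ (valid since $g,\rho\in C^{1,\beta}$ and $\lvert\xi_R\rvert\leq\widetilde M$) bounds the right-hand side by $K_{\widetilde M}\int_{\widetilde\Omega_R}\omega_{\widetilde M}(\lvert\nabla u-z_R\rvert)\bigl(\lvert\nabla u-z_R\rvert+\lvert\nabla u-z_R\rvert^{q-1}\bigr)\lvert\nabla\phi\rvert\,\d x+CMR^\beta\int_{\widetilde\Omega_R}\lvert\nabla\phi\rvert\,\d x$. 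The main obstacle is that $\widetilde\Omega_R$ is only $C^{1,\beta}$, so there is no $W^{2,2}$ estimate available to produce a good $\phi$; instead I would use Remark~\ref{rem:negative_sobolev_embedding} to write the Newtonian potential of $w-h\in L^2(\widetilde\Omega_R)$ as $G\in L^p(\widetilde\Omega_R,\bb R^{Nn})$ with $-\div G=w-h$ and $\norm{G}_{L^p(\widetilde\Omega_R)}\leq C\cal L^n(\widetilde\Omega_R)^{\frac1n+\frac1p-\frac12}\norm{w-h}_{L^2(\widetilde\Omega_R)}$, taking $p=2^*$ when $n\geq3$ (so this exponent vanishes) and $p=4$ when $n=2$, and then invoke Lemma~\ref{lem:boundary_linearelliptic}(ii) to get $\phi$ solving $-\div(F''(z_R)\nabla\phi)=-\div G$ with $\norm{\nabla\phi}_{L^p(\widetilde\Omega_R)}\leq C\norm{G}_{L^p(\widetilde\Omega_R)}$. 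Testing this equation against $w-h\in W^{1,2}_0(\widetilde\Omega_R)$ and using the symmetry of $F''(z_R)$ gives $\int_{\widetilde\Omega_R}F''(z_R)(\nabla w-\nabla h):\nabla\phi\,\d x=\norm{w-h}^2_{L^2(\widetilde\Omega_R)}$; feeding this $\phi$ into the bound above, applying H\"older with exponents $(n,2,2^*)$ (and $\cal L^n(\widetilde\Omega_R)^{1-1/2^*}$ on the remaining term), and cancelling one power of $\norm{w-h}_{L^2(\widetilde\Omega_R)}$ yields
\begin{equation*}
  \norm{w-h}_{L^2(\widetilde\Omega_R)}\leq CK_{\widetilde M}\norm{\omega_{\widetilde M}(\lvert\nabla u-z_R\rvert)}_{L^n(\widetilde\Omega_R)}\,\norm{\,\lvert\nabla u-z_R\rvert+\lvert\nabla u-z_R\rvert^{q-1}}_{L^2(\widetilde\Omega_R)}+CMR^{\beta+\frac{n+2}2},
\end{equation*}
with the obvious modification when $n=2$.

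It then remains to estimate the two factors and divide by $R^2\cal L^n(\widetilde\Omega_R)\gtrsim R^{n+2}$. Using $\widetilde\Omega_R\subset\Omega_R$ with comparable measures, $\omega_{\widetilde M}\leq1$, concavity of $\omega_{\widetilde M}$ (Jensen and subadditivity), the John--Nirenberg inequality (Proposition~\ref{prop:global_JohnNirenberg}) on the $\delta$-John domain $\Omega_R$, and the bound $\lvert(\nabla u)_{\Omega_R}-z_R\rvert\leq C\bigl(\dashint_{\Omega_R}\lvert\nabla u-(\nabla u)_{\Omega_R}\rvert^2\,\d x\bigr)^{1/2}+CMR^\beta$ — coming, exactly as in the proof of Lemma~\ref{lem:autonomous_boundary_caccioppoli}, from Remark~\ref{rem:affine_subtraction} and Lemma~\ref{lem:boundary_aconstruction} — one reaches an estimate of the form
\begin{equation*}
  \frac1{R^2}\dashint_{\widetilde\Omega_R}\lvert w-h\rvert^2\,\d x\leq C\bigl[\omega_{\widetilde M}(\seminorm{\nabla u}_{\BMO(\Omega_R)})^{2/n}+\omega_{\widetilde M}(CMR^\beta)^{2/n}\bigr]\bigl(1+\seminorm{\nabla u}_{\BMO(\Omega_R)}^{2(q-2)}\bigr)\dashint_{\Omega_R}\lvert\nabla u-(\nabla u)_{\Omega_R}\rvert^2\,\d x+CM^2R^{2\beta}.
\end{equation*}
The only term not manifestly of the required form is the one carrying $\omega_{\widetilde M}(CMR^\beta)^{2/n}$, which I would dispose of by a dichotomy: if $\dashint_{\Omega_R}\lvert\nabla u-(\nabla u)_{\Omega_R}\rvert^2\,\d x\leq M^2R^{2\beta}$ it is $\leq CM^2R^{2\beta}$, while otherwise $MR^\beta\leq\bigl(\dashint_{\Omega_R}\lvert\nabla u-(\nabla u)_{\Omega_R}\rvert^2\,\d x\bigr)^{1/2}\leq C\seminorm{\nabla u}_{\BMO(\Omega_R)}$, so $\omega_{\widetilde M}(CMR^\beta)\leq C\omega_{\widetilde M}(\seminorm{\nabla u}_{\BMO(\Omega_R)})$ and it merges with the principal term. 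Taking $\gamma(t)=\min\{1,\omega_{\widetilde M}(t)^{2/n}(1+t^{2(q-2)})\}$ (omitting the factor $1+t^{2(q-2)}$ if $q=2$, and modified suitably for $n=2$) — non-decreasing, continuous, vanishing at $0$, and depending only on $n$, $q$ and $\omega_{\widetilde M}$ — then gives the claimed remainder estimate, the energy bound for $h$ having been obtained in the first paragraph.
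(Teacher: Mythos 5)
Your proposal is correct and follows essentially the same route as the paper: the same duality argument with the dual problem $-\div\widetilde F''(0)\nabla\phi = w-h$ on $\widetilde\Omega_R$, the $L^{2^*}$ gradient bound for $\phi$ via Remark \ref{rem:negative_sobolev_embedding} and Lemma \ref{lem:boundary_linearelliptic}(ii), H\"older/Jensen with the John--Nirenberg inequality, the splitting of $\lvert\nabla u - z_R\rvert$ as in the proof of Lemma \ref{lem:autonomous_boundary_caccioppoli}, and the same choice of $\gamma$. Your explicit dichotomy for the $\omega_{\widetilde M}(CMR^\beta)$ cross-term is a small point of extra care at a step the paper treats tersely, but it does not change the argument.
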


\begin{proof}
  We will assume $n\geq 3$ so Sobolev embedding applies, taking similar modifications as in the interior case if $n=2.$ Additionally we will use similar arguments used in the proof of Lemma \ref{lem:autonomous_boundary_caccioppoli} which we will not reproduce in detail, in particular choosing $R_0, \widetilde M$ in the same way. 
  As in the interior case we will also replace $F$ with $\lambda_{\widetilde M}^{-1}F.$
  Letting $\widetilde F = F_{z_{R}}$ be the shifted functional with $z_{R}$ as in (\ref{eq:autonomous_z0_defn}) and setting $w = u-g-a_{R},$ note for $\phi \in W^{1,2}_0(\widetilde\Omega_R,\bb R^N)$ we have
  \begin{equation}
    \begin{split}
      &\dashint_{\widetilde\Omega_R} \widetilde F''(0)(\nabla w - \nabla h) : \nabla\phi \,\d x \\
      &\qquad= \dashint_{\widetilde\Omega_R} \left(\widetilde F''(0)\nabla w - \widetilde F'(\nabla u - z_{R})\right) : \nabla \phi \,\d x\\
      &\qquad\leq K_{\widetilde M} \dashint_{\widetilde\Omega_R} \omega_M(\lvert \nabla u-z_{R}\rvert)\left(\lvert \nabla u - z_{R}\rvert+\lvert \nabla u-z_{R}\rvert^{q-1}\right) \lvert \nabla\phi\rvert \,\d x \\
      &\qquad\quad + K_{\widetilde M} \dashint_{\widetilde\Omega_R} \lvert \nabla a_{R} - \nabla g - z_{R}\rvert \lvert \nabla\phi\rvert \,\d x,
    \end{split}
  \end{equation}
  where we used the comparison estimate (\ref{eq:autonomous_F_pertubation}). We now choose $\phi$ to be the unique solution to the Dirichlet problem
  \begin{equation}
    \pdeproblem{-\div \widetilde F''(0)\nabla\phi}{w-h}{\widetilde\Omega_R,}{\phi}{0}{\partial\widetilde\Omega_R.}
  \end{equation}
  Since $w-h \in L^2(\widetilde\Omega_R) \hookrightarrow W^{-1,2^*}(\widetilde\Omega_R)$ by Remark \ref{rem:negative_sobolev_embedding}, by Lemma \ref{lem:boundary_linearelliptic}(ii) with $p=2^*$ we obtain the estimate $\norm{\nabla\phi}_{L^{2^*}(\widetilde\Omega_R)} \leq C \norm{w-h}_{L^2(\widetilde\Omega_R)}.$ Therefore for this choice of $\phi$ we get
  \begin{equation}
    \begin{split}
      \dashint_{\widetilde\Omega_R} \lvert w-h\rvert^2 \,\d x &\leq C\,\omega_{\widetilde M}\left(\dashint_{\Omega_{R}} \lvert \nabla u - z_{R}\rvert \,\d x\right)^{\frac2n} \dashint_{\Omega_{R}} \lvert \nabla u - z_{R}\rvert^2 + \lvert \nabla u - z_{R}\rvert^{2(q-1)} \,\d x \\
      &\quad + C \left( \dashint_{\Omega_{R}} \lvert \nabla a_{R} - \nabla g - z_{R}\rvert^{2_*} \,\d x \right)^{\frac2{2_*}},
    \end{split}
  \end{equation}
  where we have used H\"older and Jensen's inequalities (here $2_* = \frac{2n}{n+2}$), and absorbed the $\dashint_{\widetilde\Omega_R}\lvert w-h\rvert^2\,\d x$ term on the right-hand side. Arguing by splitting $\lvert \nabla u - z_{R}\rvert$ as in (\ref{eq:autonomous_z0split}) from the previous section (proof of Lemma \ref{lem:autonomous_boundary_caccioppoli}) we arrive at the estimate
  \begin{equation}
    \dashint_{\widetilde\Omega_R} \lvert w-h\rvert^2 \,\d x \leq C\gamma\left(\seminorm{\nabla u}_{\BMO(\Omega_{R})}\right) \dashint_{\Omega_{R}} \lvert \nabla u - (\nabla u)_{\Omega_{R}}\rvert^2 \,\d x + CM^2R^{2\beta}
  \end{equation}
  with $\gamma(t) = \min\{1,\omega_{\widetilde M}(t)^{\frac2n}(1+t^{2(q-2)})\},$ as required.
\end{proof}

\subsection{Boundary \texorpdfstring{$\eps$}{epsilon}-regularity and the controlled case}\label{sec:boundary_regularityproof}

We now combine the estimates from the previous sections to conclude as in the interior case.

\begin{proof}[Proof of Theorem {\ref{thm:autonomous_boundary_regularity}}]
  For $B_r(x) \subset B_{R_0}(x_0)$ with $x \in \overline\Omega$ we consider the excess energy
  \begin{equation}\label{eq:excess_energy}
    E(x,r) = \dashint_{\Omega_r(x)} \lvert \nabla u - (\nabla u)_{\Omega_r(x)}\rvert^2\,\d y,
  \end{equation}
  so by assumption and Proposition \ref{prop:global_JohnNirenberg} there is $C_1=C_1(n,\delta)>0$ such that $E(x,r) \leq C_1\eps^2,$ which we can assume is less than $1.$

  \textbf{Claim}: If $x \in \partial\Omega$ and $r>0$ so that $\Omega_r(x) \subset \Omega_R(x_0)$ and $\sigma \in \left(0,\frac14\right)$ for which
  \begin{equation}
    \lvert (\nabla u)_{\Omega_{2\sigma r}(x)}\rvert, \lvert (\nabla u)_{\Omega_r(x)}\rvert \leq 2^{3n+1}M,
  \end{equation}
  we have
  \begin{equation}\label{eq:boundary_excessdecay}
    E(x,\sigma r) \leq C \left( \sigma^{2\beta} + \sigma^{-(n+2)} \gamma\left(\seminorm{\nabla u}_{\BMO(\Omega_r(x))}\right)\right) E(x,r) + CM^{2(q-1)}\sigma^{-(n+2)}r^{2\beta},
  \end{equation}
  where $\gamma$ is as in Lemmas \ref{lem:autonomous_boundary_caccioppoli} and \ref{lem:autonomous_harmonic_boundary} with $2^{3n+1}M$ in place of $M,$ and
  \begin{equation}
    C=C\left(n,N,q,K_{\widetilde M}/\lambda_{\widetilde M},\delta, \norm{\Omega}_{C^{1,\beta}},R_0,\seminorm{\nabla g}_{C^{0,\beta}(\Omega)}\right)>0.
  \end{equation}

  \textit{Proof of claim}: Applying the Caccioppoli-type inequality (Lemma \ref{lem:autonomous_boundary_caccioppoli}) we have
  \begin{equation}
    \begin{split}
      E(x,\sigma r) &\leq C\,\gamma\left(\seminorm{\nabla u}_{\BMO(\Omega_{2\sigma r}(x))}\right) E(x,2\sigma r) \\
      &\quad+ \frac1{\sigma^2r^2} \dashint_{\Omega_{2\sigma r}(x)} \lvert u-g - a_{2\sigma r}\rvert^2 \,\d y + CM^{2(q-1)}(\sigma r)^{2\beta},
    \end{split}
  \end{equation}
  where $a_{2\sigma r}$ is given by (\ref{eq:autonomous_boundary_approximant}) in $\Omega_{2\sigma r}(x).$ Also by the boundary harmonic approximation (Lemma \ref{lem:autonomous_harmonic_boundary}) in $\Omega_{r}(x)$ the unique solution $h \in W^{1,2}(\widetilde\Omega_{r}(x),\bb R^N)$ solving
  \begin{equation}
    \pdeproblem{-\div F''(z_{r})\nabla h}{0}{\widetilde\Omega_{r}(x),}{h}{u-g-a_{r}}{\partial\widetilde\Omega_{r}(x),}
  \end{equation}
  satisfies
  \begin{equation}
    \frac1{\sigma^2r^2} \dashint_{\Omega_{r/2}(x)} \lvert u-g-a_{r}-h\rvert^2 \,\d y \leq C\,\gamma\left(\seminorm{\nabla u}_{\BMO(\Omega_r(x))}\right) E(x,r) + CM^2r^{2\beta},
  \end{equation}
  noting that $\Omega_{r/2}(x) \subset \widetilde\Omega_r(x) \subset \Omega_r(x).$ Now by Remark \ref{rem:affine_subtraction} we have
  \begin{equation}
    \frac1{\sigma^2r^2} \dashint_{\Omega_{2\sigma r}(x)} \left\lvert u-g - a_{\sigma}\right\rvert^2 \,\d y \leq \frac1{\sigma^2r^2} \dashint_{\Omega_{2\sigma r}(x)} \left\lvert  u-g - \xi \,\frac{\rho}{\lvert \nabla\rho(x)\rvert}\right\rvert^2 \,\d y
  \end{equation}
  for all $\xi \in \bb R^N,$ so taking $\xi = \xi_r + (\nabla h \cdot \nu_{x})_{\Omega_{2\sigma r}(x)}$ we can split
  \begin{equation}
    \begin{split}
      &\frac1{\sigma^2r^2} \dashint_{\Omega_{2\sigma r}(x)} \lvert u-g-a_{\sigma}\rvert^2 \,\d y \\
      &\qquad\leq \frac1{\sigma^2r^2} \dashint_{\Omega_{2\sigma r}(x)} \left\lvert h- (\nabla h \cdot \nu_{x})_{\Omega_{2\sigma r}(x)} \frac{\rho}{\lvert\nabla \rho(x)\rvert} \right\rvert^2 \,\d y \\
      &\qquad\quad+ C\sigma^{-(n+2)}  \gamma\left(\seminorm{\nabla u}_{\BMO(\Omega_r(x))}\right) E(x,r) + CM^{2(q-1)} \sigma^{-(n+2)} r^{2\beta}.
    \end{split}
  \end{equation}
  For the second term we use the Poincar\'e inequality (Lemma \ref{lem:scaled_poincare}) and Lemma \ref{lem:boundary_aconstruction} to estimate
  \begin{equation}
    \begin{split}
      &\frac1{\sigma^2r^2} \dashint_{\Omega_{2\sigma r}(x)} \left\lvert h - (\nabla h \cdot \nu_{x})_{\Omega_{2\sigma r}(x)} \frac{\rho}{\lvert \nabla\rho(x)\rvert}\right\rvert^2 \,\d y \\
      &\qquad\leq C \dashint_{\Omega_{2\sigma r}(x)} \left\lvert \nabla h - (\nabla h \cdot \nu_{x})_{\Omega_{2\sigma r}(x)} \frac{\nabla \rho}{\lvert \nabla\rho(x)\rvert}\right\rvert^2 \,\d y \\
      &\qquad\leq C \dashint_{\Omega_{2\sigma r}(x)} \left\lvert \nabla h - (\nabla h \cdot \nu_{x})_{\Omega_{2\sigma r}(x)} \tensor \nu_{x}\right\rvert^2 \,\d y + CM^2(\sigma r)^{2\beta} \\
      &\qquad\leq C\dashint_{\Omega_{2\sigma r}(x)} \left\lvert \nabla h - (\nabla h)_{\Omega_{2\sigma r}(x)}\right\rvert^2 \,\d y + CM^2\sigma^{-n}r^{2\beta},
    \end{split}
  \end{equation}
  where we have used the bound $\lvert (\nabla h)_{\Omega_{2\sigma r}(x)}\cdot \nu_x\rvert^2 \leq CM^2\sigma^{-n}.$ Now as $h$ vanishes on $\partial\Omega \cap \partial\Omega_r(x),$ using (\ref{eq:boundary_schauder}) from Lemma \ref{lem:boundary_linearelliptic}(ii) we have the estimate 
  \begin{equation}
    \seminorm{\nabla h}_{C^{0,\beta}(\overline\Omega_{r/2}(x))} \leq C\dashint_{\widetilde\Omega_r(x)} \lvert \nabla(u-g-a_R)\rvert \,\d y \leq CE(x,r) + CM^2r^{2\beta},
  \end{equation}
  where the last line is obtained by arguing as in the proof of Lemma \ref{lem:autonomous_boundary_caccioppoli}. Hence it follows that
  \begin{equation}
    \frac1{\sigma^2r^2} \dashint_{\Omega_{2\sigma r}(x)} \left\lvert h - (\nabla h \cdot \nu_{x})_{\Omega_{2\sigma r}(x)} \frac{\rho}{\lvert \nabla\rho(x)\rvert}\right\rvert^2 \,\d y \leq C\sigma^{2\beta}E(x,r) + CM^2\sigma^{-n}r^{2\beta},
  \end{equation}
  so the claim follows by putting everything together.

  We now argue analogously as in the interior case; note for $x \in \partial\Omega \cap B_{R/2}(x_0)$ we have $\lvert (\nabla u)_{\Omega_{R/2}(x)}\rvert \leq 2^{3n}M,$ and so $\lvert (\nabla u)_{\Omega_{\sigma R/2}(x)}\rvert \leq 2^{3n}M+C_1\sigma^{-n}\eps \leq 2^{3n+1}M$ for $\eps>0$ sufficiently small. Hence applying the claim gives
  \begin{equation}
    E(x,\sigma R/2) \leq C\left(\sigma^{2\beta} + \sigma^{-(n+2)}\gamma(\eps)\right)E(x,r/2) + CM^{2(q-1)}\sigma^{-(n+2)}\widetilde R_0^{2(\beta-\alpha)}R^{2\alpha}.
  \end{equation}
  We choose $\sigma \in \left(0,\frac14\right)$ such that $C\sigma^{2\beta} \leq \frac14 \sigma^{2\alpha},$ and $\eps>0$ such that $C\sigma^{-(n+2)}\gamma(\eps) \leq \frac14 \sigma^{2\alpha}.$ We then choose $\widetilde R_0>0$ such that $CM^{2(q-1)}\sigma^{-(n+2)}\widetilde R_0^{2(\beta-\alpha)} \leq \kappa\sigma^{2\alpha}$ for $0<\kappa<1$ to be chosen to get
  \begin{equation}
    E(x,\sigma R/2) \leq \frac12 \sigma^{2\alpha} E(x,R/2) + \kappa\left(\sigma R\right)^{2\alpha}.
  \end{equation}
  Further shrinking $\eps>0$ if necessary and taking $\kappa>0$ small enough so
  \begin{equation}
    \sigma^{-(n+2)}(C_1\eps+\kappa) \sum_j \sigma^{\alpha j} \leq 3^nM,
  \end{equation}
  we can iteratively argue that for all $k \geq 0,$
  \begin{align}
    \lvert (\nabla u)_{B_{\sigma^kR/2}(x)}\rvert &\leq 2^{3n+1}M, \\
    E(x,\sigma^kR/2) &\leq 2^{-k}\sigma^{2\alpha k}E(x,R/2) + (\sigma^kR)^{2\alpha}.
  \end{align}
  Hence it follows that $E(x,r) \leq Cr^{2\alpha}$ for all $r \in (0,R/2).$ 

  By the interior case we also have $E(x,r) \leq Cr^{2\alpha}$ when $B(x,r) \subset \Omega_{R}(x_0)$ with $x \in B_{R/2}.$ We can extend this to all $x \in \Omega_{R/2}(x_0)$ and $0 < r < R/2$ by a covering argument (adjusting constants as necessary), so by the Campanato-Meyers characterisation we get $u$ is $C^{1,\alpha}$ in $\overline\Omega_{R/2}(x_0),$ as required.
\end{proof}

We now turn to the proof of Theorem \ref{thm:controlled_almostvmo}.
The key point is the follow lemma, which asserts that we obtain estimates analogous to those established in Section \ref{sec:autonomous_integrand}, with a precise dependence on $\lvert w\rvert \leq M$.

\begin{lem}\label{lem:controlled_estimates}
  Suppose $F$ satisfies Hypotheses \ref{hyp:controlled_F} for some $p \geq 2.$ Then there is $K>0$ such that for any $z,w \in \bb R^{Nn}$ we have \eqref{eq:autonomous_shifted} satisfies
\begin{align}
  \lvert F_w(z)\rvert &\leq K(1+\lvert w\rvert)^{p-2}( \lvert z\rvert^2 + \lvert z\rvert^p), \label{eq:controlled_shifted_estimate1}\\
  \lvert F'_w(z)\rvert &\leq K(1+\lvert w\rvert)^{p-2}( \lvert z\rvert + \lvert z\rvert^{p-1}),\label{eq:controlled_shifted_estimate2} \\
  \lvert F''_w(0)\rvert &\leq K(1+\lvert w\rvert)^{p-2}, \label{eq:controlled_shifted_twice}
\end{align}
and
\begin{equation}\label{eq:controlled_F_pertubation}
  \lvert F''_w(0)z - F'_w(z)\rvert \leq K(1+\lvert w\rvert)^{p-2}\omega(\lvert z\rvert)(\lvert z\rvert+\lvert z\rvert^{p-1}).
\end{equation}
for all $z,w \in \bb R^{Nn},$ with $\omega \colon [0,\infty) \to [0,1]$ a non-decreasing, continuous, and concave function such that $\omega(0)=0.$.
\end{lem}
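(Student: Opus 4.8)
Everything follows from the three integral (Taylor) representations of the shifted integrand \eqref{eq:autonomous_shifted}: since $F$ is $C^2$ we have $F_w''(z)=F''(w+z)$, so $F_w''(0)=F''(w)$, and
\[
  F_w'(z)=F'(w+z)-F'(w)=\int_0^1 F''(w+tz)z\,\d t,\qquad
  F_w(z)=\int_0^1(1-t)\,F''(w+tz)z:z\,\d t .
\]
By the growth part of \ref{item:controlled_growth} there is $K_0\ge 1$ with $\lvert F''(v)\rvert\le K_0(1+\lvert v\rvert)^{p-2}$ for all $v$; combining this with $\lvert v\rvert\le\lvert w\rvert+\lvert z\rvert$ on the segment $[w,w+tz]$, with $t^{p-2}\le1$ for $t\in[0,1]$ (as $p\ge2$), and with the elementary bound $(a+b)^{p-2}\le C_p(a^{p-2}+b^{p-2})$ ($C_p=1$ if $2\le p\le3$, $C_p=2^{p-3}$ otherwise), one gets $\lvert F''(w+tz)\rvert\le K_0C_p\big((1+\lvert w\rvert)^{p-2}+\lvert z\rvert^{p-2}\big)$. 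Inserting this into the two integral identities and absorbing the pure powers of $\lvert z\rvert$ using $(1+\lvert w\rvert)^{p-2}\ge1$ yields \eqref{eq:controlled_shifted_estimate1} and \eqref{eq:controlled_shifted_estimate2}, while \eqref{eq:controlled_shifted_twice} is immediate from $F_w''(0)=F''(w)$. This part is entirely routine.

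\textbf{The comparison estimate.} For \eqref{eq:controlled_F_pertubation} we subtract the representations of $F_w''(0)z$ and $F_w'(z)$ to obtain $F_w''(0)z-F_w'(z)=\int_0^1\big(F''(w)-F''(w+tz)\big)z\,\d t$, so it suffices to control $\lvert F''(w)-F''(w+tz)\rvert$ for $t\in[0,1]$. I would write $F''(v)=\mu(v)A(v)$ with $\mu(v)=(1+\lvert v\rvert^2)^{(p-2)/2}$ and $A$ the normalised Hessian, which by \ref{item:controlled_growth} is bounded, $\lvert A\rvert\le a_0$, and uniformly continuous; let $\omega_0\colon[0,\infty)\to[0,1]$ be a modulus of continuity for $A$, taken non-decreasing, continuous, concave with $\omega_0(0)=0$. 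Splitting $F''(w)-F''(w+tz)=\mu(w)\big(A(w)-A(w+tz)\big)+\big(\mu(w)-\mu(w+tz)\big)A(w+tz)$ and using $\mu(w)\le(1+\lvert w\rvert)^{p-2}$ together with monotonicity of $\omega_0$ (so $\omega_0(t\lvert z\rvert)\le\omega_0(\lvert z\rvert)$) gives, for $t\in[0,1]$,
\[
  \lvert F''(w)-F''(w+tz)\rvert\le(1+\lvert w\rvert)^{p-2}\omega_0(\lvert z\rvert)+a_0\,\lvert\mu(w)-\mu(w+tz)\rvert .
\]
The weight difference is handled by the mean value theorem from $\lvert\nabla\mu(v)\rvert=(p-2)(1+\lvert v\rvert^2)^{(p-4)/2}\lvert v\rvert\le(p-2)(1+\lvert v\rvert^2)^{(p-3)/2}$: when $2\le p\le 3$ this is $\le p-2$, so $\mu$ is globally Lipschitz and $\lvert\mu(w)-\mu(w+tz)\rvert\le(p-2)\lvert z\rvert$; when $p\ge3$ the same power inequalities as above give $\lvert\mu(w)-\mu(w+tz)\rvert\le C_p\,\lvert z\rvert\big((1+\lvert w\rvert)^{p-3}+\lvert z\rvert^{p-3}\big)$.

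\textbf{Conclusion and the main obstacle.} Multiplying the pointwise bound by the explicit extra factor $\lvert z\rvert$ coming from the integral and collecting terms, the right side is $(1+\lvert w\rvert)^{p-2}$ times a sum of $\lvert z\rvert\,\omega_0(\lvert z\rvert)$ and terms like $\lvert z\rvert^2(1+\lvert w\rvert)^{p-3}/(1+\lvert w\rvert)^{p-2}$, $\lvert z\rvert^2$, or $\lvert z\rvert^p/(1+\lvert w\rvert)^{p-2}$, all of which are dominated by $(1+\lvert w\rvert)^{p-2}\big(\lvert z\rvert+\lvert z\rvert^{p-1}\big)$ after distinguishing $\lvert z\rvert\le1+\lvert w\rvert$ from $\lvert z\rvert>1+\lvert w\rvert$. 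The subtle point — and the only real obstacle — is that the terms produced by differentiating the non-constant weight $\mu$ carry no intrinsic small factor in $\lvert z\rvert$ beyond the explicit powers, so to recover the stated form $\omega(\lvert z\rvert)(\lvert z\rvert+\lvert z\rvert^{p-1})$ one must choose the final modulus $\omega$ to (i) dominate a linear piece near the origin, absorbing the $O(\lvert z\rvert)$ leftovers from $\mu$, and (ii) stay bounded below on $[1,\infty)$, so that for $\lvert z\rvert\ge1$ one may simply invoke \eqref{eq:controlled_shifted_twice} and \eqref{eq:controlled_shifted_estimate2}. Concretely I would take $\omega$ to be the least concave majorant of $t\mapsto\tfrac12\min\{1,\,\omega_0(t)+t\}$, which is a non-decreasing, continuous, concave function into $[0,1]$ with $\omega(0)=0$ and $\omega(1)>0$: for $\lvert z\rvert<1$ the refined pointwise bound above works because $\omega(\lvert z\rvert)\gtrsim\omega_0(\lvert z\rvert)+\lvert z\rvert$, and for $\lvert z\rvert\ge1$ the crude bound suffices because $\omega(\lvert z\rvert)\ge\omega(1)$. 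Taking $K$ to be the maximum of all constants arising in the two regimes (and of those in \eqref{eq:controlled_shifted_estimate1}--\eqref{eq:controlled_shifted_twice}) gives \eqref{eq:controlled_F_pertubation}.
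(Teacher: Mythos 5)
Your proposal is correct and follows essentially the same route as the paper: bound the shifted quantities via the Hessian growth $(1+\lvert v\rvert)^{p-2}$, and for \eqref{eq:controlled_F_pertubation} split $F''$ into the weight times the normalised (uniformly continuous) Hessian, estimate the weight difference by distinguishing $p\in[2,3]$ from $p>3$, take $\omega\sim\min\{1,\omega_0(t)+t\}$, and handle $\lvert z\rvert\geq 1$ directly from \eqref{eq:controlled_shifted_estimate2}, \eqref{eq:controlled_shifted_twice}. The only cosmetic remark is that your least concave majorant step is unnecessary, since $t\mapsto\tfrac12\min\{1,\omega_0(t)+t\}$ is already concave, non-decreasing, continuous and vanishes at $0.$
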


\begin{proof}
  Quantifying \ref{item:controlled_growth} we have $F''(z)/(1+\lvert z\rvert)^{p-2}$ is bounded by $K$, and we let $\omega$ denote the associated modulus of continuity.
  From this \eqref{eq:controlled_shifted_twice} immediately follows, as does \eqref{eq:controlled_shifted_estimate1}, \eqref{eq:controlled_shifted_estimate2} by noting that
  \begin{equation}
    \lvert F_w(z) \rvert \leq \twopartdef{CK (1+\lvert w\rvert)^{p-2} \lvert z\rvert^2}{\text{ if } \lvert z\rvert\leq 1,}{\Lambda (1 + \lvert z\rvert)^p}{\text{ if } \lvert z\rvert > 1,}
  \end{equation} 
  and similarly for $F_w'(z).$
  Also if $\lvert z\rvert \leq 1$ we have
  \begin{equation}
    \begin{split}
      \lvert F''(w+z) - F''(w)\rvert &\leq (1+\lvert w+z\rvert)^{p-2} \left\lvert \frac{F''(w+z)}{(1+\lvert w+z\rvert)^{p-2}} - \frac{F''(w)}{(1+\lvert w\rvert)^{p-2}}\right\rvert \\
                                     &\quad+ \frac{\lvert F''(w)\rvert}{(1+\lvert w\rvert)^{p-2}} \left\lvert (1+\lvert w\rvert)^{p-2}- (1+\lvert w+z\rvert)^{p-2} \right\rvert \\
                                     &\leq (2+ \lvert w\rvert)^{p-2} K \omega(\lvert z\rvert) + CK (1 + \lvert w\rvert)^{p-2} \lvert z- w\rvert,
    \end{split}
  \end{equation} 
  where the second term is estimated by distinguishing between the cases $p \in [2,3]$ and $p > 3$.
  Hence taking $\widetilde\omega = \min\{1, \omega(t) + t\}$ we deduce that
  \begin{equation}
    \lvert F_w''(0)z - F_w'(z)\rvert \leq C K(1+\lvert w\rvert)^{p-2} \widetilde\omega(\lvert z\rvert) \lvert z\rvert
  \end{equation} 
  for $\lvert z \rvert \leq 1,$ and when $\lvert z\rvert \geq 1$ we use \eqref{eq:controlled_shifted_estimate2}, \eqref{eq:controlled_shifted_twice} to estimate
  \begin{equation}
    \lvert F_w''(0)z - F_w'(z)\rvert \leq C K(1+\lvert w\rvert)^{p-2} ( \lvert z\rvert + \lvert z\rvert^{p-1}),
  \end{equation} 
  so combining these \eqref{eq:controlled_F_pertubation} follows, replacing $CK, \widetilde\omega$ by $K, \omega$ respectively.
\end{proof}

\begin{proof}[Proof of Theorem \ref{thm:controlled_almostvmo}]
  Owing to Lemma \ref{lem:controlled_estimates}, the constants $K_M, \lambda_M$ from Section \ref{sec:autonomous_integrand} can be chosen so that $K_M / \lambda_M$ is independent of $M \geq \lvert z_0\rvert.$
  Similarly, we have the modulus of continuity $\omega = \omega_M$ is also independent of $M.$
  Hence we claim the following excess decay estimate
  \begin{equation}
    \begin{split}
      E(x,\sigma r) &\leq C\left(\sigma^{2\beta}+ \sigma^{-(n+2)} \gamma\left( \seminorm{\nabla u}_{\BMO(\Omega_r(x))}\right)\right) E(x,r) \\
      &\quad + C(1+\lvert (\nabla u)_{\Omega_{2\sigma r}}\rvert + \lvert (\nabla u)_{\Omega_{r}}\rvert)^{2}\sigma^{-(n+2)}r^{2\beta}
    \end{split}
  \end{equation}
  holds for all $x \in \overline\Omega,$ $R>0$ such that either $B_R(x) \subset \Omega$ or $x \in \overline\Omega$ and $0<R<R_0$ (with $R_0=R_0(n,\Omega)>0$).
  Indeed this follows from the excess decay estimates \eqref{eq:autonomous_excessdecay}, \eqref{eq:boundary_excessdecay} from the proofs of Theorems \ref{thm:autonomous_regularity} and \ref{thm:autonomous_boundary_regularity} respectively.
  Letting $M>0$ such that $\lvert (\nabla u)_{\Omega_{2\sigma r}}\rvert + \lvert (\nabla u)_{\Omega_{r}}\rvert \leq CM,$ in the above estimates we have $C$ and $\gamma$ depends on $M$ only through $K_M/\lambda_M$ and $\omega_M$, hence under our assumptions they are independent of $M$.
  Note in the interior case the second term can be omitted.

  Fix $\eps>0$ to be determined. Then there is $0<R<\frac{R_0}2$ for which there exists a finite covering of $\Omega$ by balls $\{B_R(x_j)\}$ where either $B_R(x_j) \subset \Omega$ or $x_j \in \overline\Omega,$ and $\seminorm{\nabla u}_{\BMO(\Omega_{2R}(x_j))} \leq 2\eps \leq 1$ for each $j.$ Let $M>0$ such that $\lvert (\nabla u)_{\Omega_{2R}(x_j)}\rvert \leq M$ for all $j,$ then observe that for all $x \in \overline\Omega$ and $0<r<R$ we have $\lvert (\nabla u)_{\Omega_r(x)}\rvert \leq C(n)M\left(1+ \log(R/r)\right).$ Hence the excess decay estimate becomes
  \begin{equation}
    E(x,\sigma r) \leq C\left(\sigma^{2\beta} + \sigma^{-(n+2)}\gamma(2\eps)\right)E(x,r) + CM^2 \sigma^{-(2n+2)} r^{2\beta} \left(1+\log(R/r)\right)
  \end{equation}
  whenever $0<r<R,$ and modifying constants this holds for all $x \in \overline\Omega.$

  Now choose $\sigma \in (0,\frac14)$ such that $C\sigma^{2\beta} \leq \frac14 \sigma^{2\alpha},$ and $\eps>0$ such that $C\sigma^{-(n+2)}\gamma(2\eps) \leq \frac14 \sigma^{2\alpha}.$ Then choose $0<r_0<R$ such that $CM^2\sigma^{-(2n+2)}r_0^{2(\beta-\alpha)} \left(1+\log(R/r_0)\right) \leq \sigma^{2\alpha}.$ This gives
  \begin{equation}
    E(x,\sigma r) \leq \frac12\sigma^{2\alpha}E(x,r) + (\sigma r)^{2\alpha},
  \end{equation}
  from which the result follows by iteration as in the proof of Theorem \ref{thm:autonomous_boundary_regularity}. 
\end{proof}

\section{Extensions}\label{sec:extensions}

Up until now we have confined our discussion to the setting of autonomous integrands, however the framework we developed extends to more general elliptic systems and higher order equation.
Rather than state the most general case possible, we will aim to highlight the necessary changes to adapt our arguments to these more general situations.

\subsection{Quasilinear elliptic systems}\label{sec:general_elliptic}

While our motivation for this investigation arose from studying the behaviour of extremals, it turns out our arguments do not make use of the variational structure of the equation. We will illustrate this by considering general Legendre-Hadamard elliptic systems, and also show how lower order terms can be handled.

More precisely we consider weak solutions to the equation
\begin{equation}\label{eq:elliptic_equation}
  -\div A(x,u,\nabla u) + B(x,u,\nabla u) = 0
\end{equation}
in $\Omega,$ subject to the following conditions.

\begin{hyp}\label{hyp:elliptic_coefficients}
  Let $n \geq 2,$ $N\geq 1,$ $\beta \in (0,1),$ $q\geq 2$ and $\Omega \subset \bb R^n$ a bounded $C^{1,\beta}$ domain. We consider Carath\'eodory functions
  \begin{align}
    A \colon \overline\Omega \times \bb R^N \times \bb R^{Nn} & \to \bb R^{Nn}, \\
    B \colon \overline\Omega \times \bb R^N \times \bb R^{Nn} & \to \bb R^{N},
  \end{align}
  satisfying the following (we use $D_u, D_z$ to denote partial derivatives in $u,$ $z$ respectively).
  \begin{enumerate}[label=(A\arabic*)]

    \item For all $(x,u,z) \in \overline\Omega \times \bb R^N \times \bb R^{Nn}$ we have
      \begin{equation*}\label{eq:elliptic_qgrowth}
        \lvert A(x,u,z)\rvert + \lvert B(x,u,z)\rvert \leq K(1+\lvert z\rvert^{q-1}).
      \end{equation*}

    \item The map $z \mapsto A(x,u,z)$ is continuously differentiable for each $(x,u),$ and for all $M>0$ there is $\Lambda_M>0$ and a continuous, non-decreasing concave function $\omega_M \colon [0,\infty) \to [0,1]$ satisfying $\omega_M(0)=0$ such that
      \begin{equation*}\label{eq:elliptic_zcontinuity}
        \lvert D_zA(x,u,z_1) - D_zA(x,u,z_2)\rvert \leq \Lambda_M \omega_M(\lvert z_1-z_2\rvert)
      \end{equation*}
      for all $x \in \overline\Omega,$ $\lvert u\rvert \leq M$ and $\lvert z_1\rvert,\lvert z_2\rvert \leq M+1.$

    \item For all $M>0,$ for $x \in \overline\Omega$ and $\lvert u\rvert,\lvert z\rvert \leq M$ we have the strong Legendre-Hadamard ellipticity condition
      \begin{equation*}\label{eq:elliptic_LH}
        D_zA(x,u,z)(\xi \tensor \eta) : (\xi \tensor \eta) \geq \lambda_M \lvert \xi\rvert^2 \lvert \eta\rvert^2
      \end{equation*}
      for all $\xi \in \bb R^N$ and $\eta \in \bb R^n.$

    \item For all $x_1,x_2 \in \overline\Omega,$ $u_1,u_2 \in \bb R^N$ and $z \in \bb R^{Nn}$ we have
      \begin{equation*}\label{eq:elliptic_xucontinuity}
        \lvert A(x_1,u_1,z) - A(x_2,u_2,z)\rvert \leq K(1+\lvert z\rvert^{q-1}) \varrho_{\beta}(\lvert x_1-x_2\rvert + \lvert u_1-u_2\rvert),
      \end{equation*}
      where $\varrho_{\beta}(t) = \min\{1,t^{\beta}\}.$

  \end{enumerate}

\end{hyp}

\begin{rem}
  A special case of the above is the Euler-Lagrange system associated to the non-autonomous integrand $F=F(x,u,z).$ Here the Euler-Lagrange system reads
  \begin{equation}
    - \div D_zF(x,u,\nabla u) + D_uF(x,u,\nabla u) = 0,
  \end{equation}
  so we need $F$ to be $C^2$ in $z$ and $C^1$ in $x,$ such that Hypotheses \ref{hyp:elliptic_coefficients} are satisfied with $A(x,u,z)=D_zF(x,u,z)$ and $B(x,u,z)=D_uF(x,u,z).$
\end{rem}

\begin{thm}[$\BMO$ $\eps$-regularity theorem for elliptic systems]\label{thm:elliptic_epsreg}
  Suppose $\Omega, A,B$ satisfies Hypotheses \ref{hyp:elliptic_coefficients} and suppose $u \in W^{1,q}_g(\Omega,\bb R^N)$ solves (\ref{eq:elliptic_equation}) with $g \in C^{1,\beta}(\overline\Omega,\bb R^N).$ Then for each $\alpha \in (0,\beta)$ and $M>0$ there is $\eps>0$ and $\widetilde R_0>0$ such that if $x \in \overline\Omega$ and $R \in (0,\widetilde R_0)$ such that $\lvert (\nabla u)_{\Omega_R(x_0)}\rvert \leq M$ and
  \begin{equation}
    \seminorm{\nabla u}_{\BMO(\Omega_R(x_0))} \leq \eps,
  \end{equation}
  then $u$ is $C^{1,\alpha}$ in $\overline{\Omega_{R/2}(x_0)}.$
\end{thm}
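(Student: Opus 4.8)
The plan is to follow the same two-step scheme used for Theorems \ref{thm:autonomous_regularity} and \ref{thm:autonomous_boundary_regularity}---a Caccioppoli-type inequality together with a harmonic approximation estimate, combined in an iteration---but now working directly with the weak form of \eqref{eq:elliptic_equation} rather than with a functional, and freezing the principal part. Fix $x_0 \in \overline\Omega$ (interior or boundary point) and a radius $r$, and set $\bb A = D_zA\big(x_0,(u)_{\Omega_r(x_0)},z_r\big)$, where $z_r$ is the affine datum already used in the autonomous arguments: $(\nabla u)_{\Omega_r}$ in the interior and $\xi_r \tensor \nu_{x_0} + (\nabla g)_{\Omega_r}$ at the boundary, as in \eqref{eq:autonomous_z0_defn}. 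Then (A3) guarantees $\bb A$ satisfies a uniform Legendre--Hadamard condition with constant $\lambda_M$, so the coercivity estimate \eqref{eq:LH_coercivity} and the linear reference estimates of Section \ref{sec:technicalprelims} (Lemma \ref{lem:boundary_linearelliptic}, Remark \ref{rem:negative_sobolev_embedding}, Lemma \ref{lem:scaled_poincare}) apply with constants controlled by $\Lambda_M/\lambda_M$.

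First I would establish the Caccioppoli-type inequality. With $w = u - a_r$ in the interior (or $w = u - g - a_r$ at the boundary) and a cutoff $\eta$ as before, I apply the coercivity estimate to $\eta w \in W^{1,2}_0$ and test \eqref{eq:elliptic_equation} against $\eta^2 w$; subtracting produces the principal error $\dashint \eta\,(\bb A\nabla w - A(x,u,\nabla u)):\nabla(\eta w)$ together with the same divergence-free and boundary-correction terms as in the proofs of Lemmas \ref{lem:autonomous_caccioppoli} and \ref{lem:autonomous_boundary_caccioppoli}. I would split $A(x,u,\nabla u) - \bb A\nabla w$ using (A2) and (A4): the $z$-freezing part is dominated by $\Lambda_M\,\omega_M(|\nabla u - z_r|)\big(|\nabla u - z_r| + |\nabla u - z_r|^{q-1}\big)$ and handled via the modular Fefferman--Stein estimate (Corollary \ref{cor:BMO_modulus}) after the splitting of $|\nabla u - z_r|$ in \eqref{eq:autonomous_z0split}; the $(x,u)$-freezing part is bounded by $K(1+|\nabla u|^{q-1})\,\varrho_\beta\big(|x-x_0| + |u - (u)_{\Omega_r}|\big)$, which, after using the Poincar\'e inequality (Lemma \ref{lem:scaled_poincare}) to trade the oscillation of $u$ for $r$ times a gradient average, and John--Nirenberg (Proposition \ref{prop:global_JohnNirenberg}) to absorb the extra powers of $|\nabla w|$, contributes an $O\big((1+M)^{2(q-1)} r^{2\beta}\big)$ remainder. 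Finally the lower-order term gives $\dashint B(x,u,\nabla u):\eta^2 w$, which by (A1) is $\lesssim \dashint \eta^2 |w|\,(1+|\nabla w|^{q-1}+|z_r|^{q-1})$; using $\|w\|_{L^2} \lesssim r\|\nabla w\|_{L^2}$, H\"older, and John--Nirenberg this is absorbed into the right-hand side at the cost of a further $O\big((1+M)^{2(q-1)} r^{2\beta}\big)$ term. The outcome has the same shape as \eqref{eq:autonomous_boundary_caccioppoli}.

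Next I would prove the harmonic-approximation lemma, again via the duality argument of Lemmas \ref{lem:autonomous_harmonic_approximation} and \ref{lem:autonomous_harmonic_boundary}: let $h$ solve $-\div(\bb A\nabla h) = 0$ with $h = w$ on $\partial\widetilde\Omega_r$, test the defect equation for $w-h$ against the solution $\phi$ of $-\div(\bb A\nabla\phi) = w-h$, and invoke the $W^{1,2^*}$ (interior $W^{2,2}$) bound for $\phi$ from Lemma \ref{lem:boundary_linearelliptic}(ii). The right-hand side splits exactly as in the Caccioppoli step: the principal $z$-part yields $\gamma([\nabla u]_{\BMO})$ times the excess through Fefferman--Stein and Jensen's inequality, while the $(x,u)$-H\"older-continuity term (via (A4)) and the lower-order term (via (A1)) produce $O\big((1+M)^2 r^{2\beta}\big)$ contributions. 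This gives the analogue of Lemma \ref{lem:autonomous_harmonic_boundary}. Combining the two estimates as in the proof of Theorem \ref{thm:autonomous_boundary_regularity}---using interior Schauder/De Giorgi regularity for $h$ to estimate the corresponding correction term for $h$ by $C\sigma^{2\beta}E(x,r) + CM^2\sigma^{-n}r^{2\beta}$, and the minimising property of $a_r$---yields the excess-decay estimate
\begin{equation*}
  E(x,\sigma r) \leq C\Big(\sigma^{2\beta} + \sigma^{-(n+2)}\gamma\big(\seminorm{\nabla u}_{\BMO(\Omega_r(x))}\big)\Big)E(x,r) + C(1+M)^{2(q-1)}\sigma^{-(n+2)} r^{2\beta}.
\end{equation*}
Choosing $\sigma \in (0,\tfrac14)$ so that $C\sigma^{2\beta}\leq\tfrac14\sigma^{2\alpha}$, then $\eps$ so that $C\sigma^{-(n+2)}\gamma(\eps)\leq\tfrac14\sigma^{2\alpha}$, then $\widetilde R_0$ so the remainder term is $\leq\sigma^{2\alpha}R^{2\alpha}$, and iterating while tracking the averages $|(\nabla u)_{\Omega_{\sigma^k r}(x)}|$ as in \eqref{eq:autonomous_averages_control}, gives $E(x,r)\lesssim r^{2\alpha}$ on $\Omega_{R/2}(x_0)$; a covering argument extends this to all small balls and the Campanato--Meyers characterisation yields $u\in C^{1,\alpha}(\overline{\Omega_{R/2}(x_0)})$.

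I expect the main obstacle to be the interplay between the lower-order term $B$ and the $(x,u)$-H\"older dependence of $A$: both must be absorbed into the right-hand sides of the Caccioppoli and harmonic-approximation estimates while keeping every constant dependent on $M$ only through $\Lambda_M/\lambda_M$ and $\omega_M$. The delicate point is that the oscillation of $u$ has to be traded for gradient averages by Poincar\'e \emph{before} John--Nirenberg can be applied to the resulting higher powers of $|\nabla w|$, and one must check that no term is genuinely critical in $|\nabla w|$---in particular that the $|\nabla u|^{q-1}$ growth of $B$, once paired with the $L^2\hookrightarrow$(Poincar\'e) control of $w$, produces only an $r^{\beta}$-order contribution rather than a term competing with the leading excess.
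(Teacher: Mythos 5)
Your proposal is correct and follows essentially the same scheme as the paper: freeze the principal part, prove a Caccioppoli-type inequality and a harmonic approximation via the duality argument, absorb the $x$-dependence (A4) and the lower-order term $B$ (A1) into $O(r^{2\beta})$ remainders using Poincar\'e and John--Nirenberg, and conclude with the same excess-decay iteration. The one place you diverge is the treatment of the $u$-argument: the paper removes it at the outset, noting that $\nabla u\in\BMO$ together with Sobolev embedding gives $u\in C^{0,\chi}$ for every $\chi<1$, so that $x\mapsto A(x,u(x),z)$ is $\widetilde\beta$-H\"older for any $\widetilde\beta\in(\alpha,\beta)$ and the autonomous-type argument applies verbatim with $\widetilde\beta$ in place of $\beta$; you instead freeze $u$ at $(u)_{\Omega_r}$ and trade its oscillation for $r$ times gradient averages via Poincar\'e inside the integrals. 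Your variant also works (and avoids the harmless loss from $\beta$ to $\widetilde\beta$), at the price of an extra H\"older/Jensen step handling $\varrho_\beta\left(\lvert x-x_0\rvert+\lvert u-(u)_{\Omega_r}\rvert\right)$ against the $\lvert\nabla u\rvert^{q-1}$ growth, which John--Nirenberg controls exactly as you indicate; in both versions one should note that for a non-variational $D_zA$ the dual problem should be posed for the transposed coefficient, which satisfies the same Legendre--Hadamard bound.
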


\noindent\textbf{Step 0: Reduction and linearisation}:
Our strategy will be similar to before; we fix $x_0 \in \overline\Omega$ and $R>0$ such that either $B_R(x_0) \subset \Omega,$ or $x_0 \in \partial\Omega$ and $0<R<R_0$ with $R_0>0$ as in the start of Section \ref{sec:boundaryregularity}. We will focus our attention to the boundary case, as the interior case is similar but simpler.
We will also fix $M>0$ such that $\lvert (\nabla u)_{\Omega_R(x_0)}\rvert \leq M$. 

We first observe that we can suppress the $u$-dependence; since $\nabla u \in \BMO(\Omega,\bb R^{Nn})$ we can use the John-Nirenberg and Sobolev inequalities to obtain $u \in C^{0,\chi}(\overline\Omega,\bb R^N)$ for all $\chi \in (0,1).$ Then fixing any $\widetilde\beta \in (\alpha,\beta)$ and taking $\chi = \widetilde\beta/\beta,$ we see that $x \mapsto A(x,u(x),z)$ and $x \mapsto B(x,u(x),z)$ are $\widetilde\beta$-H\"older continuous in $\overline\Omega.$ Hence changing the constant $K$ in \ref{eq:elliptic_xucontinuity} (depending on $n,$ $\Omega,$ $M$) we can assume $A,$ $B$ are independent of $u.$

We then consider the linearisation
\begin{equation}
  \widetilde A(z) = A(x_0,z+z_0))-A(x_0,z_0),
\end{equation}
which satisfies the growth estimates
\begin{align}
    \widetilde A(z) &\leq K_M (\lvert z\rvert + \lvert z\rvert^{q-1})\label{eq:quasilinear_growth}\\
    \widetilde A'(0) &\leq K_M \label{eq:quasilinear_second}\\
    \lvert \widetilde A'(0)z - \widetilde A(z)\rvert &\leq K_M\,\omega_{\widetilde M}(\lvert z\rvert)(\lvert z\rvert+\lvert z\rvert^{q-1}) \label{eq:quasilinear_pertubation}
\end{align}
for all $z \in \bb R^{Nn}$, along with the coercivity estimate
\begin{equation}\label{eq:quasilinear_lh}
  \lambda_M \dashint_{\Omega_R} \lvert \nabla\phi\rvert^2 \,\d x \leq \dashint_{\Omega_R} \widetilde A'(0)\nabla\phi : \nabla \phi \,\d x
\end{equation}
for all $\phi \in W^{1,2}_0(\Omega,\bb R^N).$ 

From here one can proceed analogously as in the autonomous case detailed in Sections \ref{sec:interiorregularity} and \ref{sec:boundaryregularity} replacing $\widetilde F'$ with $\widetilde A.$
We will sketch how the details can be modified, however the only difference is that we obtain extra terms arising from the $x$-dependence and the presence of the lower order term $B.$

\textbf{Step 1: Caccioppoli inequality}:
We claim that
  \begin{equation}\label{eq:elliptic_caccioppoli}
    \begin{split}
      \dashint_{\Omega_{R/2}} \lvert\nabla u - (\nabla u)_{\Omega_{R/2}}\rvert^2 \,\d x &\leq C\,\gamma\left(\seminorm{\nabla u}_{\BMO(\Omega_R)}\right) \dashint_{\Omega_R} \lvert\nabla u - (\nabla u)_{\Omega_{R}}\rvert^2 \,\d x  \\
      &\quad + \frac{C}{R^2} \dashint_{\Omega_R} \lvert u - g - a_R\rvert^2 \,\d x + CR^{2\beta},
    \end{split}
  \end{equation}
  with $\gamma(t) = \min\{1, \omega_{\widetilde M}(t)(1+t^{2(q-2)}) + t^{2(q-2)}\}$, omitting the $t^{2(q-2)}$ terms if $q =2$.
To show this, as before we will fix a cutoff $\eta \in C^{\infty}_c(B_R)$ satisfying $1_{B_{R/2}} \leq \eta \leq 1_{B_R},$ $\lvert \nabla \eta\rvert \leq \frac CR.$ 
Taking a $a_R$ as in \eqref{eq:autonomous_boundary_approximant} and set $z_R = \nabla a_R(x_0) + (\nabla g)_{\Omega_R}$. We then consider the linearisation $\widetilde A(z)$ with this choice of $z_R$, and also put $w=u-g-a_R.$
By the Legendre-Hadamard condition \eqref{eq:quasilinear_lh} we have
  \begin{equation}
    \lambda_M \dashint_{\Omega_R} \lvert \nabla(\eta w)\rvert^2 \,\d x \leq \dashint_{\Omega_R} \widetilde A'(0)\nabla(\eta w) : \nabla(\eta w) \,\d x,
  \end{equation}
  and since $u$ weakly solves (\ref{eq:elliptic_equation}) we have
  \begin{equation}
    0 = \dashint_{\Omega_R} A_z(x,\nabla u) : \nabla(\eta^2w) + B(x,\nabla u) (\eta^2 w) \,\d x,
  \end{equation}
  so combining these estimates we obtain
  \begin{equation}
    \begin{split}
      \lambda_M \dashint_{\Omega_R} \lvert \nabla(\eta w)\rvert^2 \,\d x &\leq \dashint_{\Omega_R} \eta\left(\widetilde A'(0)(\nabla u - z_R) - \widetilde A(\nabla u - z_R)\right) : \nabla(\eta w) \,\d x \\
      &\quad + \dashint_{\Omega_R} \eta\, \widetilde A'(0)(z_R-\nabla a_R - \nabla g) : \nabla(\eta w) \,\d x \\
      &\quad + \dashint_{\Omega_R} \left(A(x,\nabla u) - \widetilde A(\nabla u - z_R)\right) : \nabla(\eta w) \,\d x, \\
      &\quad + \dashint_{\Omega_R} w\, \widetilde A'(0)\nabla \eta : \nabla(\eta w) - \eta w \widetilde A(\nabla u - z_R) : \nabla \eta \\
      &\quad - \dashint_{\Omega} \eta \, B(x,\nabla u) \cdot \eta w \,\d x \\
      &= I + I\!I + I\!I\!I + I\!V + V.
    \end{split}
  \end{equation}
  We can argue exactly as in the autonomous case (proof of Lemma \ref{lem:autonomous_boundary_caccioppoli}) to estimate the terms $I, I\!I, I\!V$ as before using \eqref{eq:quasilinear_growth}, \eqref{eq:quasilinear_second}, \eqref{eq:quasilinear_pertubation}. For the remaining terms note that
  \begin{equation}
    \begin{split}
      I\!I\!I &= \dashint_{\Omega_R} \left( A(x,\nabla u) - A(x_0,\nabla u)\right) : \nabla(\eta w) \,\d x\\
      &\leq CR^{\beta}\dashint_{\Omega_R} \left(1+\lvert \nabla u-z_R\rvert^{q-1}\right) \lvert \nabla(\eta w)\rvert \\
      &\leq \frac{\lambda_M}{16} \dashint_{\Omega_R} \lvert \nabla(\eta w)\rvert^2 \,\d x + C\left(1+\seminorm{\nabla u}_{\BMO(\Omega_R)}^{2(q-1)} + R^{2\beta}\right)R^{2\beta}
    \end{split}
  \end{equation}
  where we have used the fact that $\dashint_{\Omega_R} A(x_0,z_R) : \nabla(\eta w) \,\d x = 0$ and \ref{eq:elliptic_xucontinuity} in the second line, and the last line follows from similar bounds given in the proof of Lemma \ref{lem:autonomous_boundary_caccioppoli}. Finally for the last term we can estimate
  \begin{equation}
    \begin{split}
      V &\leq K\dashint_{\Omega} (1 + \lvert \nabla u-z_R\rvert^{q-1}) \lvert \eta w\rvert \\
      &\leq CR \left(1 + \seminorm{\nabla u}_{\BMO(\Omega_R)}^{2(q-1)} + R^{2\beta} \right)^{\frac12} \left( \frac1{R^2}\dashint_{\Omega_R} \lvert \eta w\rvert^2 \,\d x \right)^{\frac12}\\
      &\leq CR^2 + \frac{\lambda_M}{16}\dashint_{\Omega_R} \lvert \nabla(\eta w)\rvert^2 \,\d x,
    \end{split}
  \end{equation}
  where we have used the Poincar\'e inequality (Lemma \ref{lem:scaled_poincare}) in the last line, which allows us to absorb the $\nabla(\eta w)$ term. Hence the result follows by putting everything together. 

\noindent\textbf{Step 2: Harmonic approximation}:
We now introduce the harmonic approximation which solves
\begin{equation}
  \pdeproblem{-\div \widetilde A'(0)\nabla h}{0}{\widetilde\Omega_R,}{h}{w}{\partial\widetilde\Omega_R,}
\end{equation}
along with the dual problem
\begin{equation}
  \pdeproblem{-\div \widetilde A'(0)\nabla \phi}{w-h}{\widetilde\Omega_R,}{\phi}{0}{\partial\widetilde\Omega_R,}
\end{equation} 
which lies in $W^{1,2^*}_0(\widetilde\Omega_R,\bb R^N).$
Using $\phi$ as a test function we obtain
  \begin{equation}
    \begin{split}
      &\dashint_{\widetilde\Omega_R} \lvert w-h\rvert^2 \,\d x  = \dashint_{\widetilde\Omega_R} \widetilde A'(0)(\nabla w - \nabla h) : \phi - A(x, \nabla u) : \nabla \phi - B(x,\nabla u) \cdot \phi \,\d x \\
      &\qquad\leq \dashint_{\widetilde\Omega_R} \left(\widetilde A'(0)(\nabla u - z_R) - \widetilde A(\nabla u - z_R)\right) : \nabla \phi \,\d x \\
      &\qquad\quad+ \dashint_{\Omega_R} \widetilde A'(0)(z_R - \nabla a_R - \nabla g) : \nabla\phi \,\d x\\
      &\qquad\quad + \dashint_{\Omega_R} \left(A(x,\nabla u) - \widetilde A(\nabla u - z_R)\right) : \nabla\phi \,\d x - \dashint_{\Omega_R} B(x,\nabla u) \cdot \phi \,\d x.
    \end{split}
  \end{equation}
  The first two terms can be estimated as in Lemma \ref{lem:autonomous_harmonic_boundary}, and for the latter two terms we have (making suitable modifications if $n=2$),
  \begin{align}
    &\dashint_{\Omega_R} \left(A(x,\nabla u) - \widetilde A(\nabla u - z_R)\right) : \nabla\phi \,\d x \leq CR^{\beta} \dashint_{\widetilde\Omega_R} (1+\lvert \nabla u -z_R\rvert^{(q-1)})\lvert \nabla\phi\rvert \,\d x,\\
    &\dashint_{\Omega_R} B(x,\nabla u) : \phi \,\d x \leq CR \left(\dashint_{\widetilde\Omega_R} 1+\lvert \nabla u-z_R\rvert^{2_*(q-1)} \,\d x\right)^{\frac1{2_*}} \left( \frac1{R^{2^*}} \dashint_{\Omega_R} \lvert \phi\rvert^{2^*} \,\d x\right)^{\frac1{2^*}},
  \end{align}
  which can be controlled similarly as the previous step using  along with the Poincar\'e inequality (Lemma \ref{lem:scaled_poincare}) with $\phi$ for the second term.
  Therefore we obtain the remainder estimate
  \begin{equation}\label{eq:quasilinear_harmonic}
    \frac1{R^2}\dashint_{\widetilde\Omega_R} \lvert w-h\rvert^2 \,\d x \leq C \,\gamma\left(\seminorm{\nabla u}_{\BMO(\Omega_{R})}\right) \dashint_{\Omega_{R}} \lvert \nabla u - (\nabla u)_{\Omega_{R}}\rvert^2 \,\d x + CR^{2\beta},
  \end{equation}
  where $\gamma(t) = \min\{1,\omega_{\widetilde M}(t)^{\frac2n}(1+t^{2(q-2)})\}.$

  \noindent\textbf{Step 3: Excess decay and conclusion}: 
  Now we can combine the above two estimates to deduce decay estimates for the excess energy \eqref{eq:excess_energy}.
  Since the estimates \eqref{eq:elliptic_caccioppoli} and \eqref{eq:quasilinear_harmonic} are identical to the estimates established in Lemmas \ref{lem:autonomous_boundary_caccioppoli}, \ref{lem:autonomous_harmonic_boundary}, we can argue exactly as in Section \ref{sec:boundary_regularityproof} to conclude.
  Thus we have established Theorem \ref{thm:elliptic_epsreg}.

\subsection{Higher order integrands}\label{sec:higherorder}

We will also outline how analogous results can be obtained for $k$\th order problems. For this fix $k \geq 1,$ and let $\bb M_k = \sym_k(\bb R^n,\bb R^N)$ denote the space of symmetric $k$-linear maps $(\bb R^{n})^k \to \bb R^n.$ If $\xi \in \bb R^N$ and $\eta \in \bb R^n,$ we write $\eta^k = \eta \, \tensor \cdots \tensor\, \eta$ to denote the $k$-fold tensor product and identify elements $\xi \tensor \eta^{k} \in \bb M_k$ to send $(x_1,\dots,x_k) \to \xi\sum_{\lvert \alpha\rvert=k} x^{\alpha}\eta^{\alpha}.$ Similarly in the case when $k=1,$ for $z,w \in \bb M_k$ we write $z:w = \sum_{\lvert \alpha\rvert=k} z(e^{\alpha}).w(e^{\alpha}),$ where we take tensor powers of the standard orthonormal basis $\{e_i\}$ for $\bb R^n.$ This defines an inner product and hence an associated norm $\lvert \cdot\rvert$ on $\bb M_k.$ 

We will consider extremals of the integrand
\begin{equation}
  \cal F(w) = \int_{\Omega} F(\nabla^kw(x)) \,\d x,
\end{equation}
where $F \colon \bb M_k \to \bb R$ and $\nabla^ku$ denotes the $k$\th order partial derivatives of $u.$ These satisfy the Euler-Lagrange equation
\begin{equation}
  (-1)^k \nabla^k : F'(\nabla^ku) = 0
\end{equation}
weakly in $\Omega$ in the sense that
\begin{equation}\label{eq:higher_weakformulation}
  \int_{\Omega} F'(\nabla^ku) : \nabla^k \varphi \,\d x = 0
\end{equation}
for all $\varphi \in C^{\infty}_c(\Omega,\bb R^N).$ The minimising case has been studied for instance in \cite{article:GiaquintaModica79,article:Guidorzi00,article:Kronz02}, and also by the author in \cite{article:Irving21a} where similar arguments are employed to what is considered below.

\begin{hyp}\label{hyp:higherorder_autonomous}
  For $n \geq 2,$ $N,k\geq 1,$ let $F \colon \bb M_k \to \bb R$ be a $C^2$ integrand satisfying the natural growth condition
  \begin{equation}
    \lvert F(z)\rvert \leq K(1+\lvert z\rvert)^q
  \end{equation}
  for all $z \in \bb M_k$ with $q \geq 2,$ and the strict Legendre-Hadamard condition
  \begin{equation}
    F''(z_0)(\xi \tensor \eta^k) : (\xi \tensor \eta^k) \geq 0
  \end{equation}
  for all $z_0$ and all $\xi \in \bb R^N,$ $\eta \in \bb R^n,$ with equality if and only if $\xi \tensor \eta^k = 0.$
\end{hyp}

\begin{thm}[Higher order $\BMO$ $\eps$-regularity theorem]\label{thm:higherorder_regularity}
  Suppose $F$ satisfies Hypotheses \ref{hyp:higherorder_autonomous}, $\Omega$ is a bounded $C^{1,\beta}$ domain for some $\beta \in (0,1),$ and $g \in C^{k,\beta}(\overline\Omega,\bb R^N).$ Then for each $\alpha \in (0,\beta)$ and $M>0,$ there is $\eps>0$ and $\widetilde R_0>0$ such that if $x \in \overline\Omega$ and $0<R<\widetilde R_0$ such that if $u \in W^{k,q}_g(\Omega,\bb R^N)$ is $F$-extremal in $\Omega_R(x_0)$ such that $\lvert (\nabla^k u)_{\Omega_R(x_0)}\rvert \leq M$ and
  \begin{equation}
    \seminorm{\nabla^ku}_{\BMO(\Omega_R(x_0))} \leq \eps,
  \end{equation}
  we have $u$ is $C^{k,\alpha}$ in $\overline{\Omega_{R/2}(x_0)}.$
\end{thm}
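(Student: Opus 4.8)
The plan is to reprove Theorem~\ref{thm:higherorder_regularity} by the same three-step scheme used for Theorems~\ref{thm:autonomous_regularity} and \ref{thm:autonomous_boundary_regularity}, performing the substitution $\nabla \mapsto \nabla^k$ everywhere: the rank-one cone $\{\xi\tensor\eta\}$ becomes $\{\xi\tensor\eta^k\}$, the inner product on $\bb R^{Nn}$ becomes the one on $\bb M_k$, and the affine approximant $a_R$ becomes a polynomial of degree $\le k$. First I would record the estimates for the shifted integrand $F_w(z) = F(z+w) - F(w) - F'(w)z$, $w\in\bb M_k$ with $\lvert w\rvert\le M$: since the strict Legendre--Hadamard condition on $\{\xi\tensor\eta^k\}$ still makes $F$ convex along those directions, the growth hypothesis gives $\lvert F'(z)\rvert \le C(n,N,k)K(1+\lvert z\rvert)^{q-1}$, and one obtains the growth and comparison bounds of \eqref{eq:autonomous_shifted_estimate1}--\eqref{eq:autonomous_F_pertubation} verbatim, with $K_M,\lambda_M$ and modulus $\omega_M$ depending on $F,k,M$. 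The linear theory required --- solvability and the $L^2$-coercivity $\lambda_M\int\lvert\nabla^k\varphi\rvert^2 \le \int F''(\nabla^k a_R)\nabla^k\varphi:\nabla^k\varphi$ for $\varphi\in W^{k,2}_0$, interior Schauder estimates, and $W^{k,p}$ Calder\'on--Zygmund estimates for the constant-coefficient operator $(-1)^k\nabla^k : F''(\nabla^k a_R)\nabla^k(\cdot)$ --- is classical for $k$-th order Legendre--Hadamard systems (the coercivity follows from Plancherel, since the symbol is comparable to $\lvert\zeta\rvert^{2k}$ on the L--H cone), and near the boundary it is transplanted to a smooth model domain $\widetilde\Omega_R$ sandwiched between $\Omega_{R/2}$ and $\Omega_R$ as in Lemma~\ref{lem:boundary_linearelliptic}.

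Next I would establish the Caccioppoli-type inequality. In the interior, let $a_R$ be the $L^2(B_R)$-optimal polynomial of degree $\le k$; following Kronz~\cite{article:Kronz02} one has $\lvert\nabla^k a_R - (\nabla^k u)_{B_R}\rvert \le C(n,k)\seminorm{\nabla^k u}_{\BMO(B_R)}$ and the minimality property replacing \eqref{eq:interior_affineminima}. Putting $w = u - a_R$ (interior) or $w = u - g - a_R$ (boundary, with $a_R$ a $\rho$-adapted polynomial chosen so that $\eta^{2k}w\in W^{k,2}_0(\Omega_R(x_0))$, in the spirit of Kronz~\cite{article:Kronz05} and Lemma~\ref{lem:boundary_aconstruction}), the function $w$ is $F_{\nabla^k a_R}$-extremal, so testing \eqref{eq:higher_weakformulation} against $\eta^{2k}w$ with a cutoff $1_{B_{R/2}}\le\eta\le 1_{B_R}$, $\lvert\nabla^j\eta\rvert\lesssim R^{-j}$, subtracting the coercivity inequality for $\eta^k w$, and controlling the resulting error terms by means of the comparison estimate, the modular Fefferman--Stein inequality (Corollary~\ref{cor:BMO_modulus}) and the John--Nirenberg inequality (Proposition~\ref{prop:global_JohnNirenberg}) --- the latter absorbing the higher powers $\lvert\nabla^k w\rvert^q$, $\lvert\nabla^k w\rvert^{2(q-1)}$ into $\gamma(\seminorm{\nabla^k u}_{\BMO})$-small terms --- one obtains, exactly as in Lemmas~\ref{lem:autonomous_caccioppoli} and \ref{lem:autonomous_boundary_caccioppoli},
\begin{equation*}
  \dashint_{B_{R/2}}\lvert\nabla^k w\rvert^2\,\d x \le C\,\gamma\!\left(\seminorm{\nabla^k u}_{\BMO(B_R)}\right)\dashint_{B_R}\lvert\nabla^k w\rvert^2\,\d x + \frac{C}{R^{2k}}\dashint_{B_R}\lvert w\rvert^2\,\d x,
\end{equation*}
with an additional term $CM^{2(q-1)}R^{2\beta}$ in the boundary case.

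Then I would prove the harmonic approximation lemma: let $h$ solve $(-1)^k\nabla^k:F''(\nabla^k a_R)\nabla^k h = 0$ on $B_R$ (resp.\ $\widetilde\Omega_R$) with $h - w \in W^{k,2}_0$, so that $\dashint\lvert\nabla^k h\rvert^2 \le C\dashint\lvert\nabla^k w\rvert^2$; as in Lemma~\ref{lem:autonomous_harmonic_approximation}, one tests the difference against the solution $\phi$ of $(-1)^k\nabla^k:F''(\nabla^k a_R)\nabla^k\phi = w-h$ and uses the $W^{k,p}$-estimate with $p = 2^*$ (and, for $k\ge 2$, the potential representation and negative-order Sobolev embedding as in Remark~\ref{rem:negative_sobolev_embedding}, with the usual modifications when $n\le 2k$) to bound $\lVert\nabla^k\phi\rVert_{L^{2^*}}$ by $\lVert w-h\rVert_{L^2}$ up to the appropriate power of $R$, then combines this with H\"older's and Jensen's inequalities, the concavity of $\omega_M$, John--Nirenberg and the comparison estimate to get $R^{-2k}\dashint\lvert w-h\rvert^2 \le C\gamma(\seminorm{\nabla^k u}_{\BMO(B_R)})\dashint\lvert\nabla^k w\rvert^2$ (plus $CM^2R^{2\beta}$ at the boundary). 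Finally, with $E(x,r) = \dashint_{B_r(x)}\lvert\nabla^k u - (\nabla^k u)_{B_r(x)}\rvert^2\,\d y$, I would combine the two lemmas with interior Schauder estimates for $h$ (comparing $h$ with its $k$-th order Taylor polynomial at $x$ gives $\dashint_{B_{2\sigma r}(x)}\lvert h - T^k_x h\rvert^2 \lesssim \sigma^{2k+2}r^{2k}\dashint_{B_r(x)}\lvert\nabla^k h\rvert^2$) and the $k$-th order Poincar\'e inequality to derive the excess-decay estimate $E(x,\sigma r) \le C(\sigma^2 + \sigma^{-(n+2k)}\gamma(\seminorm{\nabla^k u}_{\BMO(B_r(x))}))E(x,r)$ (with a boundary remainder $CM^{2(q-1)}\sigma^{-(n+2k)}r^{2\beta}$), exactly as in the proofs of Theorems~\ref{thm:autonomous_regularity}, \ref{thm:autonomous_boundary_regularity} and \ref{thm:controlled_almostvmo}; choosing $\sigma$, then $\eps$ and $\widetilde R_0$ small and iterating --- controlling the averages $\lvert(\nabla^k u)_{B_{\sigma^j r}(x)}\rvert$ along the way by the geometric series of excesses --- yields $E(x,r) \le C(r/R)^{2\alpha}E(x,R)$ for $r\in(0,R/2)$, and the Campanato--Meyers characterisation then gives $\nabla^k u \in C^{0,\alpha}(\overline{\Omega_{R/2}(x_0)})$, i.e.\ $u\in C^{k,\alpha}(\overline{\Omega_{R/2}(x_0)},\bb R^N)$.

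I expect the main obstacle to be the boundary construction of $a_R$. One needs $a_R$ compatible with the Dirichlet datum, so that $\eta^{2k}(u-g-a_R)\in W^{k,2}_0(\Omega_R(x_0))$: this forces $a_R$ and its derivatives of order $\le k-1$ to vanish on $\partial\Omega\cap B_R(x_0)$ while the normal component of $\nabla^k a_R$ matches the corresponding average of $\nabla^k(u-g)$ on $\Omega_R(x_0)$, the remaining components being recovered from the $\BMO$ seminorm through the $k$-th order analogue of Campos Cordero's trick (Lemma~\ref{lem:boundary_aconstruction}). Since the defining function $\rho$ is only $C^{1,\beta}$, constructing such an $a_R$ when $k\ge 2$ requires either strengthening the hypothesis to a $C^{k,\beta}$ domain or carrying the harmonic-approximation step out on the smooth model domain $\widetilde\Omega_R$ of Lemma~\ref{lem:boundary_linearelliptic} (after establishing its $k$-th order analogue) and using $\rho$ only to produce the boundary-vanishing approximant on $\Omega_R(x_0)$ itself. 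Once this is in place, everything else is the direct $\nabla\mapsto\nabla^k$ transcription of Sections~\ref{sec:interiorregularity}--\ref{sec:boundaryregularity}, drawing on the higher-order partial-regularity machinery of Kronz~\cite{article:Kronz02,article:Kronz05} and the author~\cite{article:Irving21a}.
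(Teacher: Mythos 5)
Your proposal follows essentially the same route as the paper's proof: shifted-integrand estimates on $\bb M_k$, a Caccioppoli-type inequality obtained by testing against $\eta^{2k}w$ and absorbing the intermediate derivatives $\nabla^j w$ by interpolation and an iteration lemma, a duality harmonic approximation on the model domain $\widetilde\Omega_R$ (the paper takes the dual right-hand side $\nabla^{k-1}(w-h)$ rather than $w-h$, but this is an inessential variant of your negative-Sobolev argument), and the usual excess-decay iteration, with the boundary approximant $a_R=\xi_R\,\rho^k/\lvert\nabla\rho(x_0)\rvert^k$ and a $k$-th order version of Campos Cordero's lemma. The obstacle you flag is genuine and is handled in the paper exactly as you suggest: the higher-order Campos Cordero estimate (Lemma \ref{lem:general_camposcordero}) is stated for a $C^{k,\beta}$ domain, so that $\rho$ and $\widetilde\Omega_R$ may be taken of class $C^{k,\beta}$ to match the order of the problem.
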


Similarly as in Section \ref{sec:autonomous_integrand} for each $M>0$ there is $K_M, \lambda_M > 0$ and a non-decreasing continuous and concave function $\omega_M \colon [0,\infty) \to [0,1]$ satisfying $\omega_M(0)=0$ for which the following holds. If for $z_0 \in \bb M_k$ such that $\lvert z_0\rvert\leq M$ we define
\begin{equation}
  F_{z_0}(z) = F(z_0+z) - F(z_0) - F'(z_0)z.
\end{equation}
This satisfies identical growth and perturbation estimates as in (\ref{eq:autonomous_shifted_estimate2}), (\ref{eq:autonomous_F_pertubation}), namely
\begin{align}
  \lvert F_{z_0}(z)\rvert &\leq K_M( \lvert z\rvert^2 + \lvert z\rvert^q), \label{eq:higher_growth1}\\
  \lvert F'_{z_0}(z)\rvert &\leq K_M( \lvert z\rvert + \lvert z\rvert^{q-1}),\label{eq:higher_growth2}\\
  \lvert F_{z_0}''(0)\rvert &\leq K_M, \label{eq:higher_second}\\
  \lvert F_{z_0}''(0)z - F_{z_0}'(z)\rvert &\leq K_M\,\omega_M(\lvert z\rvert)\left(  \lvert z\rvert + \lvert z\rvert^{q-1} \right) \label{eq:higher_pertubation}
\end{align}
for all $z \in \bb M_k$,
along with the coercivity estimate
\begin{equation}\label{eq:higher_coercivity}
  \int_{\bb R^n} F_{z_0}''(0) \nabla^k\varphi : \nabla^k\varphi \,\d x \geq \lambda_M \int_{\bb R^n} \lvert \nabla^k\varphi\rvert^2 \,\d x
\end{equation}
for all $\varphi \in C^{\infty}_c(\bb R^n,\bb R^N).$

We will also need the following extension of \textsc{Campos Cordero}'s estimate (Lemma \ref{lem:boundary_aconstruction}).

\begin{lem}\label{lem:general_camposcordero}
  Suppose $\Omega$ is a bounded $C^{k,\beta}$ domain for some $\beta \in (0,1)$ and $p>\frac32,$ then there is $R_0>0$ such that for all $x_0 \in \partial\Omega,$ $0<R<R_0$ and $v \in W^{k,p}(\Omega_R(x_0),\bb R^N)$ such that $\nabla_{\nu}^jv = \nabla^jv \cdot \nu^j = 0$ on $\partial\Omega \cap B_R(x_0)$ for each $0\leq j \leq k-1,$ we have the estimate
  \begin{equation}
    \begin{split}
      &\left(\dashint_{\Omega_R(x_0)} \lvert \nabla^kv - (\nabla^kv \cdot \nu_{x_0}^k)_{\Omega_R(x_0)} \tensor \nu_{x_0}^k\rvert^p \,\d x \right)^{\frac1p} \\
      &\qquad\leq \left(\dashint_{\Omega_R(x_0)} \lvert \nabla^kv - (\nabla^kv)_{\Omega_R(x_0)}\rvert^p \,\d x \right)^{\frac1p} + C\lvert (\nabla^kv)_{\Omega_R(x_0)}\rvert R^{\beta},
    \end{split}
  \end{equation}
  with $C=C(n,k,\beta,p,\Omega).$
\end{lem}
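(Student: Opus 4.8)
The plan is to follow the first-order argument of Lemma \ref{lem:boundary_aconstruction}, iterating the integration identity used there once for each tangential derivative that occurs. Fix $x_0\in\partial\Omega$; after translating and rotating we may assume $x_0=0$ and $\nu_{x_0}=e_n$, so that the pure normal component $\nabla^kv\cdot\nu_{x_0}^k$ becomes $\partial_n^kv$. Writing $\partial^\gamma v$ for the $\gamma$-component of $\nabla^kv$ (the mixed partial derivative indexed by a multi-index $\gamma$ with $\lvert\gamma\rvert=k$), the quantity $\lvert\nabla^kv-(\partial_n^kv)_{\Omega_R}\tensor e_n^k\rvert$ is comparable to $\lvert\partial_n^kv-(\partial_n^kv)_{\Omega_R}\rvert+\sum_{\gamma\neq ke_n}\lvert\partial^\gamma v\rvert$, and since $\lvert\partial^\gamma v-(\partial^\gamma v)_{\Omega_R}\rvert\leq\lvert\nabla^kv-(\nabla^kv)_{\Omega_R}\rvert$ the assertion reduces to showing
\begin{equation*}
  \lvert(\partial^\alpha v)_{\Omega_R}\rvert\leq C\left(\dashint_{\Omega_R}\lvert\nabla^kv-(\nabla^kv)_{\Omega_R}\rvert^p\,\d x\right)^{\frac1p}+C\lvert(\nabla^kv)_{\Omega_R}\rvert R^{\beta}
\end{equation*}
for every multi-index $\alpha$ with $\lvert\alpha\rvert=k$ possessing at least one index $<n$; here $\lvert\Omega_R\rvert\simeq R^n$ is used freely.

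First I would introduce a corrector. Taking $\rho\in C^{k,\beta}(\bb R^n)$ a defining function for the $C^{k,\beta}$ domain $\Omega$, constructed as in Section \ref{sec:boundary_averages}, set $a(x)=(\partial_n^kv)_{\Omega_R}\,\rho(x)^k/(k!\,\lvert\nabla\rho(x_0)\rvert^k)$ and $\widetilde v=v-a$. Since every derivative of $\rho^k$ of order $\leq k-1$ carries a factor of $\rho$ and so vanishes on $\partial\Omega$, while the hypotheses $\nabla_\nu^jv=0$ on $\partial\Omega\cap B_R$ for $0\leq j\leq k-1$ force $\nabla^jv=0$ there for all such $j$ (a standard consequence of the Dirichlet-type boundary condition, obtained by commuting normal and tangential derivatives), all partial derivatives of $\widetilde v$ of order $\leq k-1$ have vanishing trace on $\partial\Omega\cap B_R$. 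Moreover, as $\rho\in C^{k,\beta}$ with $\rho(x_0)=0$ and $\nabla\rho(x_0)=\lvert\nabla\rho(x_0)\rvert e_n$, one has $\nabla^k(\rho^k)(x)=k!(\nabla\rho(x))^{\tensor k}+O(\lvert\rho(x)\rvert)$ on $\Omega_R$, whence $\nabla^ka=(\partial_n^kv)_{\Omega_R}\bigl(e_n^k+O(R^{\beta})\bigr)$ with implied constant governed by $\norm{\Omega}_{C^{k,\beta}}$. Consequently $(\partial^\alpha\widetilde v)_{\Omega_R}=(\partial^\alpha v)_{\Omega_R}+O(\lvert(\nabla^kv)_{\Omega_R}\rvert R^{\beta})$ for $\alpha\neq ke_n$, $(\partial_n^k\widetilde v)_{\Omega_R}=O(\lvert(\nabla^kv)_{\Omega_R}\rvert R^{\beta})$, and the $L^p$-oscillation of $\nabla^k\widetilde v$ is controlled by that of $\nabla^kv$ up to $C\lvert(\nabla^kv)_{\Omega_R}\rvert R^{\beta}$; so it suffices to prove the reduced bound with $\widetilde v$ in place of $v$ on the left and $\lvert(\partial_n^k\widetilde v)_{\Omega_R}\rvert$ in place of the $R^\beta$ term.

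The heart of the matter is the following iteration step. Given a multi-index $\gamma$ with $\lvert\gamma\rvert=k$ having some index $i<n$, write $\gamma=e_i+\beta$ with $\lvert\beta\rvert=k-1$, so that $\partial^\beta\widetilde v$ vanishes on $\partial\Omega\cap B_R$. The divergence theorem on $\Omega_R=\Omega\cap B_R(0)$ (outward normal $x/R$ on $\Omega\cap\partial B_R$), followed by the integration identity used in the proof of Lemma \ref{lem:boundary_aconstruction} (integrating $\partial_n\partial^\beta\widetilde v$ over the normal slices, on each of which $\partial^\beta\widetilde v$ vanishes at the boundary graph point), gives
\begin{equation*}
  (\partial^\gamma\widetilde v)_{\Omega_R}=\frac1{\lvert\Omega_R\rvert}\int_{\Omega\cap\partial B_R}\partial^\beta\widetilde v\,\frac{x_i}R\,\d\cal H^{n-1}=\frac1{\lvert\Omega_R\rvert}\int_{\Omega_R}\partial^{\beta+e_n}\widetilde v\,\frac{x_i}{(R^2-\lvert x'\rvert^2)^{\frac12}}\,\d x.
\end{equation*}
Hölder's inequality together with the bound $\norm{x_i(R^2-\lvert x'\rvert^2)^{-1/2}}_{L^{p'}(\Omega_R)}\leq C(n,p)\lvert\Omega_R\rvert^{1-1/p}$, valid precisely because $p>\tfrac32$ makes $(R^2-\lvert x'\rvert^2)^{-p'/2}$ integrable over $\Omega_R$ after rescaling, then yields $\lvert(\partial^\gamma\widetilde v)_{\Omega_R}\rvert\leq C(n,p)\bigl(\dashint_{\Omega_R}\lvert\partial^{\beta+e_n}\widetilde v\rvert^p\,\d x\bigr)^{1/p}$, and $\beta+e_n$ has exactly one fewer index $<n$ than $\gamma$. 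Starting from $\gamma=\alpha$, splitting each intermediate average via $\bigl(\dashint_{\Omega_R}\lvert\partial^{\gamma'}\widetilde v\rvert^p\bigr)^{1/p}\leq\bigl(\dashint_{\Omega_R}\lvert\nabla^k\widetilde v-(\nabla^k\widetilde v)_{\Omega_R}\rvert^p\bigr)^{1/p}+\lvert(\partial^{\gamma'}\widetilde v)_{\Omega_R}\rvert$ and re-applying the identity to the last term, after at most $k$ steps one reaches $\gamma=ke_n$ and obtains $\lvert(\partial^\alpha\widetilde v)_{\Omega_R}\rvert\leq C\bigl(\dashint_{\Omega_R}\lvert\nabla^k\widetilde v-(\nabla^k\widetilde v)_{\Omega_R}\rvert^p\,\d x\bigr)^{1/p}+C\lvert(\partial_n^k\widetilde v)_{\Omega_R}\rvert$. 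Combined with the properties of $\widetilde v$ from the previous paragraph, this gives the reduced claim and hence the lemma, with $C=C(n,k,\beta,p,\Omega)$.

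The part I expect to require the most care, beyond the multi-index bookkeeping, is keeping the iteration honest: at every stage one needs the order-$(k-1)$ derivative entering the divergence theorem to have vanishing trace on $\partial\Omega\cap B_R$, which is exactly why the full Dirichlet-type condition $\nabla_\nu^jv=0$ for $0\leq j\leq k-1$ is assumed and why the corrector is built from $\rho^k$ rather than a smaller power; verifying that this condition propagates to all partial derivatives of order $\leq k-1$ is the one genuinely technical point. The weighted $L^{p'}$ estimate, and with it the restriction $p>\tfrac32$, carries over verbatim from the first-order case.
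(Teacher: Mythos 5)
Your proposal is correct and follows essentially the same route as the paper's proof: subtract the corrector $(\nabla_n^k v)_{\Omega_R}\,\rho^k/\lvert\nabla\rho(x_0)\rvert^k$ so that all derivatives of order $\leq k-1$ of the corrected function vanish on $\partial\Omega\cap B_R$, iterate the first-order integration identity from Lemma \ref{lem:boundary_aconstruction} to trade tangential indices for normal ones (splitting off oscillations at each step), and convert the $\rho^k$-contributions into $CR^{\beta}$ errors via the $C^{0,\beta}$ regularity of $\nabla^k(\rho^k)$ and its value at $x_0$. The only deviations are cosmetic (the $k!$ normalisation in the corrector and the explicit remark on propagating the vanishing Dirichlet data to all partials of order $\leq k-1$, which the paper uses implicitly).
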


\begin{proof}
  As in the $k=1$ case, by translation and rotation we can assume that $x_0 = 0$ and $\nu(x_0) = e_n$, and put
  \begin{equation}
    \tilde v(x) = v(x) - (\nabla_n^kv)_{\Omega_R} \frac{\rho(x)^k}{\lvert \nabla\rho(0)\rvert^k}.
  \end{equation} 
  Here $\rho$ is the defining function from Section \ref{sec:boundary_averages}, which can be chosen to be of class $C^{k,\beta}$ since $\partial\Omega$ is of this regularity.

  \textbf{Claim}: For any multi-index $\lvert \tilde\alpha\rvert \leq k-1$ and $1 \leq i \leq n-1,$ there is $C>0$ such that
  \begin{equation}\label{eq:gradientavg_preiterate}
    \begin{split}
    \left\lvert \dashint_{\Omega_R} \nabla^{\tilde\alpha}\nabla_i \tilde v \,\d x \right\rvert 
    &\leq C\left\lvert \dashint_{\Omega_R} \nabla^{\tilde\alpha}\nabla_n \tilde v \,\d x \right\rvert \\
    &\quad + C \left( \dashint_{\Omega_R} \lvert \nabla^k v - (\nabla^k v)_{\Omega_R}\rvert^p \,\d x \right)^{\frac1p} + C\lvert (\nabla^k_nv)_{\Omega_R}\rvert R^{\alpha}.
    \end{split}
  \end{equation} 
  \textit{Proof of claim}: Arguing as in the proof of Lemma \ref{lem:boundary_aconstruction}, applying \eqref{eq:cordero_onederivative} with $\nabla^{\tilde\alpha}\tilde v$ in place of $\tilde v$ gives
  \begin{equation}
    \left\lvert \dashint_{\Omega_R} \nabla^{\tilde\alpha}\nabla_i \tilde v \,\d x \right\rvert \leq C \left( \dashint_{\Omega_R} \lvert \nabla^{\tilde\alpha}\nabla_n\tilde v\rvert^p \,\d x \right)^{\frac1p},
  \end{equation} 
  and so by the triangle inequality
  \begin{equation}
    \left\lvert \dashint_{\Omega_R} \nabla^{\tilde\alpha}\nabla_i \tilde v \,\d x \right\rvert \leq \left\lvert \dashint_{\Omega_R} \nabla^{\tilde\alpha}\nabla_n \tilde v \,\d x \right\rvert + \left( \dashint_{\Omega_R} \lvert \nabla^{\tilde\alpha}\nabla_n \tilde v - (\nabla^{\tilde\alpha}\nabla_n \tilde v)_{\Omega_R}\rvert^p \,\d x \right)^{\frac1p}.
  \end{equation} 
  The second term can be estimated as
  \begin{equation}
    \begin{split}
    \left( \dashint_{\Omega_R} \lvert \nabla^{\tilde\alpha}\nabla_n \tilde v - (\nabla^{\tilde\alpha}\nabla_n \tilde v)_{\Omega_R}\rvert^p \,\d x \right)^{\frac1p}
    &\leq \left( \dashint_{\Omega_R} \lvert \nabla^k v - (\nabla^k v)_{\Omega_R}\rvert^p \,\d x \right)^{\frac1p} \\
      +& \frac{\lvert (\nabla^k_nv)_{\Omega_R}\rvert}{\lvert \nabla \rho(0)\rvert^k}\left( \dashint_{\Omega_R} \lvert \nabla^k (\rho^k) - (\nabla^k (\rho^k))_{\Omega_R}\rvert^p \,\d x \right)^{\frac1p}.
  \end{split}
  \end{equation} 
  To estimate the $\rho^k$ term we use the uniform estimate
  \begin{equation}
    \frac1{\lvert \nabla\rho(0)\rvert^k}\lvert \nabla^k(\rho^k)(x) - \nabla^k(\rho^k)(0)\rvert \leq C R^{\beta}
  \end{equation} 
  holding for all $x \in \Omega_R$, which follows by noting that $\nabla^k(\rho^k)$ is of class $C^{0,\beta}$ such that $\nabla^k(\rho^k)(0) = (\nabla\rho(0))^k$.
  Combining the estimates the claim follows.

  We can now conclude by iterating \eqref{eq:gradientavg_preiterate} to show that for any $\lvert \alpha\rvert = k,$ we have
  \begin{equation}\label{eq:highercampos_final1}
    \lvert (\nabla^{\alpha}\tilde v)_{\Omega_R(x_0)}\rvert \leq C \lvert (\nabla_n^k\tilde v)_{\Omega_R(x_0)}\rvert + C \left( \dashint_{\Omega_R} \lvert \nabla^k v - (\nabla^k v)_{\Omega_R}\rvert^p \,\d x \right)^{\frac1p} + C\lvert (\nabla^k_nv)_{\Omega_R}\rvert R^{\beta}.
  \end{equation} 
  To pass from $v$ to $\tilde v$ we note that
  \begin{equation}\label{eq:highercampos_final2}
    \lvert (\nabla^{\alpha}v)_{\Omega_R(x_0)}\rvert - \lvert (\nabla^{\alpha}\tilde v)_{\Omega_R(x_0)}\rvert \leq \lvert (\nabla_n^k v)_{\Omega_R}\rvert \frac{\lvert (\nabla^{\alpha}(\rho^k))_{\Omega_R}\rvert}{\lvert \nabla\rho(0)\rvert^k} \leq C \lvert (\nabla_n^k v)_{\Omega_R}\rvert R^{\beta},
  \end{equation} 
  noting again that $\nabla^{\alpha}(\rho^k)$ is a $C^{0,\beta}$-function vanishing at the origin.
  Hence we can conclude by estimating
  \begin{equation}
    \begin{split}
      &\left(\dashint_{\Omega_R(x_0)} \lvert \nabla^kv - (\nabla^kv \cdot \nu_{x_0}^k)_{\Omega_R(x_0)} \tensor \nu_{x_0}^k\rvert^p \,\d x \right)^{\frac1p} \\
      &\qquad\leq \left(\dashint_{\Omega_R(x_0)} \lvert \nabla^kv - (\nabla^kv)_{\Omega_R(x_0)}\rvert^p \,\d x \right)^{\frac1p} + \sum_{\substack{\lvert \alpha \rvert = k,\\ \alpha \neq e_n^k}}\lvert (\nabla^{\alpha}v)_{\Omega_R(x_0)}\rvert\\
      &\qquad\leq C\left(\dashint_{\Omega_R(x_0)} \lvert \nabla^kv - (\nabla^kv)_{\Omega_R(x_0)}\rvert^p \,\d x \right)^{\frac1p} + \lvert (\nabla^kv)_{\Omega_R(x_0)}\rvert R^{\beta},
    \end{split}
  \end{equation}
  where we used both \eqref{eq:highercampos_final1} and \eqref{eq:highercampos_final2} in the last line.
\end{proof}

With this technical estimate in hand, we can turn to the proof of Theorem \ref{thm:higherorder_regularity}. We fix $x_0 \in \overline\Omega$ and chose $R>0$ such that either $B_R(x_0) \subset\Omega,$ or $x_0 \in \partial\Omega$ and $R <R_0$ with $R_0$ as in the start of Section \ref{sec:boundaryregularity}. In the interior case we let $a : \bb R^n \to \bb R^N$ the $k$\th order polynomial satisfying
\begin{equation}
  \nabla^ka_R(x) = \frac{n+2}{R^2} \dashint_{B_R(x_0)} \nabla^{k-1} u \tensor (x-x_0) \,\d x
\end{equation}
and $\left(\nabla^j(u-a_R)\right)_{B_R(x_0)}=0$ for each $0 \leq j \leq k-1,$ and in the boundary case we take
\begin{equation}
  a_R(x) = \xi_R \frac{\rho(x)^k}{\lvert \nabla\rho(x_0)\rvert^k} = \frac{((u-g) \rho^k)_{\Omega_R(x_0)}}{(\rho^{2k})_{\Omega_R(x_0)}} \rho(x)^k.
\end{equation}
We then set $w=u-g-a_R$ and $z_R = \nabla^ka_R(x_0) + (\nabla^kg)_{\partial\Omega_R}$ omitting the $g$-terms in the interior case. Since $\rho$ vanishes at $x_0$ we note that $\nabla^k(\rho^k)(x_0) = (\nabla\rho(x_0))^k,$ and so $\nabla^ka_R(x_0) = \xi_R \tensor \nu_{x_0}^k$ in the boundary case. We assume $\lvert (\nabla^k u)_{\Omega_R(x_0)}\rvert \leq M,$ so then $\lvert z_R\rvert \leq \widetilde M = CM.$
As before write $\widetilde F = F_{z_R}$.
In the below we will focus on the boundary case; the interior case is similar but usually simpler.

\noindent\textbf{Step 1: Caccioppoli-type inequality}: We will show that
\begin{equation}\label{eq:higher_caccioppoli}
  \begin{split}
    &\dashint_{\Omega_{R/2}} \lvert \nabla w\rvert^2 \,\d x \\
    &\qquad\leq C\,\gamma\left(\seminorm{\nabla^k u}_{\BMO(\Omega_R)}\right) \dashint_{\Omega_R} \lvert \nabla^ku-(\nabla^ku)_{\Omega_R}\rvert^2 \,\d x + \frac{C}{R^{2k}} \dashint_{\Omega_R} \lvert w\rvert \,\d x + CR^{2\beta},
  \end{split}
\end{equation}
with $\gamma(t) = \min\{1,\omega_{\widetilde M}(t)(1+t^{2(q-2)}) + t^{2(q-2)}\}$, omitting the $t^{2(q-2)}$ terms if $q =2$.

This will involve a slight modification to account for intermediate derivatives.
Fix $0<t<s<R$ and let $\eta \in C^{\infty}_c(B_R(x_0))$ such that $1_{B_t} \leq \eta \leq 1_{B_s}$ with $\lvert \nabla^j\eta\rvert \leq C(s-t)^{-j}$ for each $0\leq j \leq k.$
Then applying the coercivity estimate (\ref{eq:higher_coercivity}) to $\eta^k w$ and testing the equation (\ref{eq:higher_weakformulation}) against $\eta^{2k}w$ we have 
\begin{equation}
  \begin{split}
      \lambda_{\widetilde M} &\dashint_{\Omega_R} \lvert \nabla^k(\eta^k w)\rvert^2 \,d x \\
                             &\leq \dashint_{\Omega_R} \widetilde F''(0)\nabla^k(\eta^k w) : \nabla^k(\eta^k w) \,\d x - \dashint_{\Omega_R} \widetilde F'(\nabla^k u -z_{R}): \nabla^k(\eta^{2k}w) \,\d x \\
      &= \dashint_{\Omega_R} \eta^k\left( \widetilde F''(0)(\nabla^k u - z_R) - \widetilde F'(\nabla^k u - z_R)\right) : \nabla^k(\eta w) \,\d x \\
      &\quad+\dashint_{\Omega_R} \eta^k \widetilde F''(0)(z_R - \nabla^k a_R - \nabla^k g) : \nabla(\eta^k w) \,\d x \\
      &\quad+ \sum_{j=0}^{k-1}\dashint_{\Omega_R}  \widetilde F''(0)\nabla^{k-j} (\eta^k) \tensor \nabla^jw : \nabla(\eta w) \,\d x\\
      &\quad- \sum_{j=0}^{k-1}\dashint_{\Omega_R} \widetilde F'(\nabla^k u -z_R) : \nabla^{k-j}(\eta^k) \tensor \nabla^j(\eta^k w) \,\d x \\
      &= I + I\!I + I\!I\!I + I\!V.
  \end{split}
\end{equation}
The for the last term $I\!V$ we use \eqref{eq:higher_growth1} and uniform bounds on $\eta$ to estimate
\begin{equation}
  \begin{split}
    I\!V &\leq CK_{\widetilde M} \sum_{j=0}^{k-1} \dashint_{\Omega_R}\eta^k\left( \lvert \nabla^ku-z_R\rvert+\lvert\nabla^ku-z_R\rvert^{q-1} \right) \lvert \nabla^{k-j}\eta\rvert \lvert \nabla^jw \rvert \,\d x,\\
         &\leq CK_{\widetilde M}  \sum_{i=0}^{k-1} \frac1{(s-r)^j} \dashint_{\Omega_R} \left( \lvert \eta^k \nabla w\rvert +  \eta^k \lvert\nabla w\rvert^{q-1} \right) \lvert \nabla^j w\rvert \,\d x\\
         &\quad + CK_{\widetilde M}  \sum_{i=0}^{k-1} \frac1{(s-r)^j} \dashint_{\Omega_R} \eta^k\left( \lvert z_R - \nabla^k a_R - \nabla^k g\rvert +  \lvert z_R - \nabla^k a_R - \nabla^k g\rvert^{q-1} \right) \lvert \nabla^j w\rvert \,\d x.
  \end{split}
\end{equation} 
For the remaining terms we estimate $I$ using \eqref{eq:higher_pertubation}, and for $I\!I,$ $I\!I\!I$ we use \eqref{eq:higher_second}.
By splitting terms using Young's inequality to absorb terms of the form $\nabla^k(\eta^kw)$(as in the proof of Lemma \ref{lem:autonomous_boundary_caccioppoli}) we arrive at
\begin{equation}\label{eq:higher_precaccioppoli}
  \begin{split}
    \int_{\Omega_R} \lvert \nabla^k(\eta^k w)\rvert^2 \,\d x &\leq C \int_{\Omega_R} \omega_{\widetilde M}(\lvert \nabla^ku-z_R\rvert) \left( \lvert \nabla^k u -z_R\rvert^2 + \lvert \nabla^k u - z_R\rvert^{2(q-1)}\right) \,\d x  \\
      &\quad + C \int_{\Omega_R} \lvert \nabla^k u - z_R\rvert^{2(q-1)} \,\d x \\
      &\quad + C \int_{\Omega_R} \lvert \nabla^k a_R - \nabla^kg - z_R\rvert^2 + \lvert \nabla^k a_R - \nabla^kg - z_R\rvert^{2(q-1)} \,\d x\\
      &\quad + C \sum_{j=0}^{k-1} \frac1{(s-t)^{2j}}\int_{\Omega_s} \lvert \nabla^{k-j}w\rvert^2 \,\d x.
  \end{split}
\end{equation}
where the second term does not arise if $q = 2$.
For the last term we use the interpolation estimate to bound the intermediate derivatives $\norm{\nabla^{k-j}w}_{L^2(\Omega_s)},$ using for instance in \cite[Lemma 5.6]{book:AdamsFournier03} (applied in $B_s(x_0)$ after extending by zero). Applying this for the terms we can bound
\begin{equation}
  C \sum_{j=0}^{k-1} \frac1{(s-t)^{2j}}\int_{\Omega_s} \lvert \nabla^{k-j}w\rvert^2 \,\d x \leq \frac12 \int_{\Omega_t} \lvert \nabla^kw\rvert^2 \,\d x + \frac{C}{(s-t)^{2k}} \int_{\Omega_s} \lvert w\rvert^2 \,\d x,
\end{equation}
so then we can absorb the $\nabla^k w$ term by a standard iteration argument (for instance \cite[Lemma 6.1]{book:Giusti03}). For the remaining terms we can bound $\lvert \nabla^ka_R-\nabla^kg-z_R\rvert \leq CR^{\beta}$ and for the $\lvert \nabla^ku-z_R\rvert$ term we note that
\begin{equation}
  \begin{split}
    &\lvert \xi_R - (\nabla^k(u-g) \cdot \nu_{x_0}^k)_{\Omega_R}\rvert \\
    &\quad\leq C\left(\dashint_{\Omega_R} \left\lvert  \nabla^k(u -g) - (\nabla^k(u-g)_{\Omega_R} \cdot \nu_{x_0}^k)\tensor \nu_{x_0}^k\right\rvert^2  \,\d x \right)^{\frac12} + CR^{\beta} \\
    &\quad\leq C \left(\dashint_{\Omega_R} \left\lvert  \nabla^k(u -g) - (\nabla^k(u-g))_{\Omega_R}\right\rvert^2  \,\d x \right)^{\frac12} + CR^{\beta}.
  \end{split}
\end{equation}
Here the first inequality generalises the estimate of \textsc{Kronz} \cite{article:Kronz05} used in Remark \ref{rem:affine_subtraction}, and involves noting that $\dashint_{\Omega_R} \rho^{2k} \lvert \nabla\rho(x_0)\rvert^{-2k} \,\d x \sim R^{2k}$ for $R>0$ sufficiently small and applying the Poincar\'e inequality $k$-times. In the second line we apply Lemma \ref{lem:general_camposcordero}. 
Now we can replace $z_R$ with $(\nabla u)_{\Omega_R}$ in the first two terms in \eqref{eq:higher_precaccioppoli}, allowing us to apply the modular Fefferman-Stein estimate (Corollary \ref{cor:BMO_modulus}) and the John-Nirenberg inequality (Proposition \ref{prop:global_JohnNirenberg}) to infer the claimed estimate \eqref{eq:higher_caccioppoli}.

\noindent\textbf{Step 2: Harmonic approximation}:
Now we take the unique $h \in W^{k,2}(\widetilde\Omega_R,\bb R^N)$ solving the Dirichlet problem
\begin{equation}
  \pdeproblem{(-1)^k\nabla^k : \widetilde F''(0)\nabla^kh}{0}{\widetilde\Omega_R,}{h}{\nabla_{\nu}^jw}{\partial\widetilde\Omega_R \text{ for all } 0 \leq j \leq k-1,}
\end{equation}
where $\widetilde\Omega_R$ is as in Proposition \ref{lem:boundary_linearelliptic}, noting it can be chosen to be $C^{k,\beta}$ to match the regularity of the boundary.
For the duality argument we also consider the unique $\phi \in W^{k,2^*}(\widetilde\Omega_R,\bb R^N)$ to
\begin{equation}
  \pdeproblem{(-1)^k\nabla^k : \widetilde F''(0)\nabla^k\phi}{\nabla^{k-1}(w-h)}{\widetilde\Omega_R,}{\nabla_{\nu}^j\phi}{0}{\partial\widetilde\Omega_R \text{ for all } 0 \leq j \leq k-1,}
\end{equation}
which we claim satisfies the scaled estimate
\begin{equation}
  \norm{\nabla^k\phi}_{L^{2^*}(\widetilde\Omega_R)} \leq R^{-1} \norm{\nabla^{k-1}(w-h)}_{L^2(\widetilde\Omega_R)}.
\end{equation}
For the excess decay estimate we will also need the H\"older estimate
\begin{equation}
  \seminorm{\nabla^kh}_{C^{0,\beta}(\overline{\Omega_{R/2}})} \leq C \dashint_{\Omega_R} \lvert \nabla^kh\rvert^2 \,\d x.
\end{equation}
These results go back to \cite{article:Campanato67} (see also \cite{article:ADN2}), but they can also be straightforwardly adapted from the second order case detailed in \cite[Chapter 10]{book:Giusti03}.

Given these estimates we can argue analogously to the proofs of Lemmas \ref{lem:autonomous_harmonic_approximation}, \ref{lem:autonomous_harmonic_boundary} to show that
\begin{equation}\label{eq:claim_harmonicapprox}
  \frac1{R^{2}}\dashint_{\widetilde\Omega_R} \lvert \nabla^{k-1}(w-h)\rvert^2 \,\d x \leq  C\, \gamma\left(\seminorm{\nabla^ku}_{\BMO(\Omega_R)}\right) \dashint_{\Omega_R} \lvert \nabla^ku-(\nabla^ku)_{\Omega_R}\rvert^2 \,\d x + CR^{2\beta},
\end{equation}
with $\gamma(t) = \min\{1,\omega_{\widetilde M}(t)^{\frac1n}(1+ t^{2(q-2)})\},$ suitably modified if $n=2.$
Indeed we can write
\begin{equation}
  \begin{split}
    &\dashint_{\widetilde\Omega_R} \lvert \nabla^{k-1}(w - h)\rvert^2 \,\d x\\
    &\qquad= \dashint_{\widetilde\Omega_R} \widetilde F''(0)(\nabla^kw - \nabla^kh) : \nabla^k\phi - \widetilde F'(\nabla^k u - z_R) : \nabla^k\phi \,\d x \\
    &\qquad\leq K_{\widetilde M} \dashint_{\widetilde\Omega_R} \omega_M(\lvert\nabla^k u - z_R\rvert) \left( \lvert\nabla^k u - z_R\rvert + \lvert\nabla^ku - z_R\rvert^{q-1} \right) \lvert\nabla^k\phi\rvert \,\d x\\
    &\qquad\quad + K_{\widetilde M} \dashint_{\widetilde\Omega_R} \lvert\nabla^k a_R - \nabla^k g - z_R\rvert \lvert\nabla^k\phi\rvert \,\d x,
  \end{split}
\end{equation} 
and we split the first term using H\"older, invoking the $L^{2^*}$ estimates for $\nabla^k\phi.$ Replacing $z_R$ by $(\nabla u)_{\Omega_R}$ and sing the John-Nirenberg inequality, the claimed estimate \eqref{eq:claim_harmonicapprox} follows.

\noindent\textbf{Step 3: Excess decay estimate}:
Finally to conclude we consider the higher-order excess
\begin{equation}
  E(x,r) = \dashint_{\Omega_r(x)} \lvert \nabla^ku-(\nabla^ku)_{\Omega_r(x)}\rvert^2 \,\d y.
\end{equation}
Then assuming $\lvert (\nabla^ku)_{\Omega_{2\sigma r}(x)}\rvert, \lvert (\nabla^ku)_{\Omega_r(x)}\rvert \leq 2^{3n+1}M$ we can combine the previous two estimates to deduce the decay estimate
\begin{equation}
  E(x,\sigma r) \leq C \left(\sigma^{2\beta} + \sigma^{-(n+2k)} \gamma\left(\seminorm{\nabla^ku}\right)_{\BMO(\Omega_r(x))}\right)E(x,r) + C\sigma^{-(n+2k)}r^{2\beta}.
\end{equation}
Now we can iterate in the usual way to establish Theorem \ref{thm:higherorder_regularity}.

\subsection*{Acknowledgements}
The author would like to thank Jan Kristensen for the many helpful discussions and suggestions.
The author is also thankful to the anonymous referee for carefully reading the original manuscript and for proving valuable feedback; in particular for pointing out an error in proof of Lemma \ref{lem:general_camposcordero} which has since been amended.



\end{document}